
\documentclass[preprint,12pt]{elsarticle}

\usepackage{amssymb}
\usepackage{amsmath}
\usepackage{mathtools}
\usepackage{amsfonts}
\usepackage{amsthm}
\usepackage{float}
\usepackage{times}
\usepackage{tikz-cd}
\usepackage{mathabx}
\usepackage{stmaryrd}
\usepackage{multicol}
\usetikzlibrary{arrows}

\floatstyle{boxed}
\newtheorem{theorem}{Theorem}[section]

\newtheorem{lemma}[theorem]{Lemma}
\newtheorem{corollary}[theorem]{Corollary}
\theoremstyle{definition}
\newtheorem{example}[theorem]{Example}
\newtheorem{remark}[theorem]{Remark}


\newcommand{\m}{\mathbf} 

\newcommand{\ra}{\mathbin{\rightarrow}}
\newcommand{\jn}{\vee}
\newcommand{\mt}{\wedge}

\newcommand{\ls}{\setbox0\hbox{$-$}
\mathbin{\hbox{$-$\kern-\wd0\raise2\dp0\hbox{$\cdot$}\kern.3\wd0\lower2\dp0\hbox{$\cdot$}}}}
\newcommand{\rs}{\setbox0\hbox{$-$}
\mathbin{\hbox{$-$\kern-\wd0\lower2\dp0\hbox{$\cdot$}\kern.3\wd0 \raise2\dp0\hbox{$\cdot$}}}}

\newcommand{\lsc}{\setbox0\hbox{$-$}
\mathbin{\hbox{$-$\kern-\wd0\raise2\dp0\hbox{$\cdot$}\kern.3\wd0\lower2\dp0\hbox{\phantom{$\cdot$}}}}}

\newcommand{\rsc}{\setbox0\hbox{$-$}
\mathop{\hbox{$-$\kern-\wd0\lower2\dp0\hbox{\phantom{$\cdot$}}\kern.3\wd0\raise2\dp0\hbox{$\cdot$}}}}

\newcommand{\lrsc}{\setbox0\hbox{$-$}
\mathop{\hbox{$-$\kern-\wd0\raise2\dp0\hbox{$\cdot$}\kern.3\wd0\raise2\dp0\hbox{$\cdot$}}}}

\renewcommand{\ln}{{\sim}}
\newcommand{\rn}{{-}}

\newcommand{\CHAT}[2]{{\small\bf\color{red} {#1: }}{\color{blue}#2}}

\newcommand{\diagcov}{\Yleft}







\journal{Journal of Algebra}

\begin{document}

\begin{frontmatter}

\title{Generation and decidability for periodic $\ell$-pregroups}
\author[inst1]{Nikolaos Galatos}
\ead{ngalatos@du.edu}

\affiliation[inst1]{organization= {Department of Mathematics, University of Denver},
            addressline={2390 S. York St.}, 
            city={Denver},
            postcode={80208}, 
            state={CO},
            country={USA}}

\author[inst1]{Isis A. Gallardo}
\ead{isis.gallardo@du.edu}

\begin{abstract}

In  \cite{GG} it is shown that the variety $\mathsf{DLP}$ of distributive $\ell$-pregroups is generated by a single algebra, the functional algebra $\m{F}(\mathbb{Z})$ over the integers.
Here, we show that $\mathsf{DLP}$ is equal to the join of its subvarieties $\mathsf{LP_n}$, for $n \in \mathbb{Z}^+$, consisting of $n$-periodic $\ell$-pregroups. We also prove that every algebra in $\mathsf{LP_n}$ embeds into the subalgebra $\m{F}_n(\m \Omega)$ of $n$-periodic elements of $\m{F}(\m \Omega)$, for some integral chain $\m \Omega$; we use this representation to show that for every $n$, the variety $\mathsf{LP_n}$ is generated by the single algebra  $\m{F}_n(\mathbb{Q} \overrightarrow{\times}\mathbb{Z})$, noting that the  chain $\mathbb{Q} \overrightarrow{\times}\mathbb{Z}$
is independent of $n$.

We further establish a second representation theorem: every algebra in $\mathsf{LP_n}$ embeds into  the wreath product of an $\ell$-group and $\m{F}_n(\mathbb{Z})$, showcasing the prominent role of the simple $n$-periodic $\ell$-pregroup $\m{F}_n(\mathbb{Z})$.
 Moreover, we prove that 
the join of the varieties $\mathsf{V}(\m{F}_n(\mathbb{Z}))$ is also equal to $\mathsf{DLP}$, hence equal to the join of the varieties $\mathsf{LP_n}$, even though $\mathsf{V}(\m{F}_n(\mathbb{Z}))\not =\mathsf{LP_n}$ for every single $n$. In this sense  $\mathsf{DLP}$ has two different  well-behaved approximations.

We further prove that, for every $n$, the equational theory of $\m{F}_n(\mathbb{Z})$ is decidable and, using the wreath product decomposition, we show that  the equational theory of $\mathsf{LP_n}$ is decidable, as well.
\end{abstract}
\begin{keyword}

periodic lattice-ordered pregroups\sep decidability \sep equational theory \sep variety generation \sep residuated lattices \sep lattice-ordered groups \sep diagrams
\end{keyword}
\end{frontmatter}

\section{Introduction}

 A \emph{lattice-ordered pregroup} (\emph{$\ell$-pregroup}) is an algebra $(A,\wedge,\vee,\cdot,^{\ell} ,^{r},1)$, where $(A,\wedge,\vee)$ is a lattice, $(A,\cdot,1)$ is a monoid, mutilication preserves the lattice order $\leq$, and for all $x$,  
 $$
x^{\ell} x\leq1\leq xx^{\ell} \text{ and }xx^{r}\leq1\leq x^{r}x.
$$ We often refer to $x^\ell$ and to $x^r$ as the \emph{left} and \emph{right inverse} of $x$, respectively.
 
The $\ell$-pregroups that satisfy $x^\ell=x^r$ are exactly the \emph{lattice-ordered groups} (\emph{$\ell$-groups}): algebras $(A,\wedge,\vee,\cdot,{}^{-1},1)$, where $(A,\wedge,\vee)$ is a lattice $(A,\cdot,{}^{-1}, 1)$ is a group, and mutilication preserves the order. Lattice-ordered groups have been studied extensively (\cite{AF}, \cite{Da}, \cite{GHo}, \cite{KM}) and they admit a Calyey-style representation theorem due to Holland \cite{Ho-em}: every $\ell$-group  can be embedded into the \emph{symmetric} $\ell$-group of order-preserving permutations on a totally-ordered set.

 \emph{Pregroups} are defined in a similar way as $\ell$-pregroups, but without the stipulation that the underlying order yields a lattice. Pregroups were introduced and studied in the context of mathematical linguistics, both in the theoretical realm (in connection to context-free grammars and automata) and in applications (for studying the sentence structure of various natural languages), see \cite{La}, \cite{Bu}, \cite{Ba}. Pregroups where the order is discrete (or, equivalently, that satisfy $x^\ell=x^r$) are exactly groups. 
 
 It turns out that $\ell$-pregroups are precisely the \emph{residuated lattices} that satisfy $(xy)^\ell =y^\ell x^\ell$ and $x^{r\ell}=x=x^{\ell r}$ (see \cite{GJKO}), hence they can be axiomatized by equations and they form a variety of algebras which we denote by $\mathsf{LP}$. Since other examples of residuated lattices include the lattice of ideals of a ring with unit, Boolean algebras, and the algebra of binary relations over a set among others, $\ell$-pregroups enjoy  common properties  with these structures (for example, congruences correspond to `normal' subalgebras). Furthermore, since residuated lattices form algebraic semantics for \emph{substructural logics} \cite{GJKO} (including intuitionistic, linear, relevance, and many-valued logic), the study of $\ell$-pregroups relates to the study of these non-classical logical systems, as well.

  \medskip

An $\ell$-pregroup is called \emph{distributive}, if its underlying lattice is distributive. It is shown in \cite{GH} that distributive $\ell$-pregroups, like $\ell$-groups,  also enjoy a Cayley/Holland embedding theorem: every distributive $\ell$-pregroup can be embedded in the \emph{functional/symmetric} $\ell$-pregroup $\m F(\m \Omega)$ over a chain $\m \Omega$. Here $\m F(\m \Omega)$ consists of all functions on $\m \Omega$ that have \emph{residuals} and \emph{dual residuals} of all orders (as defined in Section~\ref{s: F(Omega)}) under composition and pointwise order. In \cite{GG} we showed that this representation theorem can be improved in the sense that the chain $\m \Omega$ can be assumed to be integral; a chain is called \emph{integral} if every point is contained in an interval isomorphic to $\mathbb{Z}$. This tighter representation is used in \cite{GG} to show that the variety of distributive $\ell$-pregroups is generated by a single algebra: the functional algebra $\m{F}(\mathbb{Z})$. Here, we establish similar generation results for all periodic varieties of $\ell$-pregroups.

An  $\ell$-pregroup is called \emph{$n$-periodic}, for $n \in \mathbb{Z}^+$, if it satisfies $x^{\ell^n}=x^{r^n}$; we denote the corresponding variety by $\mathsf{LP_n}$. For example, $2$-periodic $\ell$-pregroups satisfy $x^{\ell \ell}= x^{rr}$, while $1$-periodic $\ell$-pregroups are precisely the $\ell$-groups; we say that an $\ell$-pregroup is \emph{periodic} if it is $n$-periodic for some $n \in \mathbb{Z}^+$. As we mention right below, periodicity is related to distributivity, so there is hope of using a similar approach as in \cite{GG} to obtain a representation theorem and a generation result.

In particular, even though it is still 
an open problem whether every $\ell$-pregroup is distributive (we only know that  the underlying lattice is semi-distributive by \cite{GJKP}),  $\ell$-groups are known to be distributive and in \cite{GJ} it is shown that actually all periodic $\ell$-pregroups are distributive, i.e.,  $\mathsf{LP_n}$ is a subvariety of $\mathsf{DLP}$ for all $n \in \mathbb{Z}^+$.

In Section~\ref{s: elements of F_N} , we make use of this fact to show that the representation theorem of \cite{GG} for distrubutive $\ell$-pregroups restricts nicely to $n$-periodic $\ell$-pregroups, for every $n$. In particular, first we observe that if $\m \Omega$ is a chain, then the elements of $\m F(\m \Omega)$ that satisfy $x^{\ell^n}=x^{r^n}$ form a subalgebra $\m F_n (\m \Omega)$ of $\m F(\m \Omega)$ and we provide an alternative characterization for these elements; then we show that every $n$-periodic $\ell$-pregroup embeds in $\m F_n (\m \Omega)$, for some integral chain $\m \Omega$ (first representation for $n$-periodic $\ell$-pregroups). In other
words the operator $\m F_n$ does for $\mathsf{LP_n}$ what the operator $\m F$ does for $\mathsf{DLP}$, at least in terms of an embedding theorem.
Moreover, utilizing that every integral chain $\m \Omega$ is locally isomorphic to $\mathbb{Z}$, we show that every function on $\m \Omega$ decomposes into a global component and many local components, where the latter are elements of $\m F_n (\mathbb{Z})$. This shows the important role that $\m F_n (\mathbb{Z})$ plays; we give an acessible description of the elements of $\m F_n (\mathbb{Z})$ and show that the latter is a simple algebra. Finally, we extend the notion of wreath product (for groups, monoids, and $\ell$-groups) to $\ell$-pregroups and prove that  every $n$-periodic $\ell$-pregroup embeds into the wreath product $\m H \wr \m F_n (\mathbb{Z})$ of an $\ell$-group $\m H$ and  $\m F_n (\mathbb{Z})$ (second representation for $n$-periodic $\ell$-pregroups).

 As mentioned earlier, $\ell$-pregroups are involutive residuated lattices and it turns out that the notion of $n$-periodicity for $\ell$-pregroups is just a specialization of the one for involutive residuated lattices.    In \cite{GJframes} it is shown that the join of all of the $n$-periodic varieties of involutive residuated lattices, for  $n \in \mathbb{Z}^+$, is equal to the whole variety of involutive residuated lattices; the proof is quite involved using both algebraic and proof-theoretic arguments. It is natural to ask what is the join of all of the $\mathsf{LP_n}$'s and in particular whether the join is all of  $\mathsf{DLP}$, given that each $\mathsf{LP_n}$ is a subvariety of $\mathsf{DLP}$. Since we do not have an analytic proof-theoretic calculus for $\ell$-pregroups we cannot use the methods of \cite{GJframes}. 

In Section~\ref{s: joinperiodic} we prove that indeed  $\mathsf{DLP}$ is equal to the join of all of the varieties  $\mathsf{LP_n}$. In more detail, using the method of diagrams of \cite{GG} we show that if an equation fails in $\mathsf{DLP}$, then it fails in  $\m F_n (\mathbb{Z})$, for some $n$. As a result we obtain that $\mathsf{DLP}$ is also equal to the join of the varieties $\mathsf{V}(\m F_n (\mathbb{Z}))$, thus complementing nicely the generation result for  $\mathsf{DLP}$ given in \cite{GG}. With the goal of converting the representation theorem for $\mathsf{LP_n}$, for every $n$, into a generation result, we search for  chains $\m \Omega_n$ such that the variety  $\mathsf{LP_n}$ is generated by $\m F_n (\m \Omega_n)$. Since $\bigvee \mathsf{LP_n}=\bigvee \mathsf{V}(\m F_n (\mathbb{Z}))$, we investigate whether we can take $\m \Omega_n=\mathbb{Z}$ for all $n$. It is easy to see that this cannot work for $n=1$, as $\m F_n (\mathbb{Z})$ is isomorphic to the $\ell$-group on the integers, hence it generates the variety of abelian $\ell$-groups, not the variety of all $\ell$-groups. We further prove that, unfortunately, $\mathsf{LP_n}\not = \mathsf{V}(\m F_n (\mathbb{Z}))$ for every single $n$, contrasting with $\bigvee \mathsf{LP_n}=\bigvee \mathsf{V}(\m F_n (\mathbb{Z}))$.
Since  $\m F_1 (\mathbb{Q})$ ends up being the $\ell$-group or order-preserving permutations on $\mathbb{Q}$, Holland's generation theorem shows that
we can take $\m \Omega_1$ to be $\mathbb{Q}$, i.e.,  $\mathsf{LP_1}=\mathsf{V}(\m F_1 (\mathbb{Q}))$.
We prove, however, that $\mathsf{LP_n} \not =\mathsf{V}(\m F_n (\mathbb{Q}))$, for every $n>1$,
leaving unsettled the question of whether there is a choice of  $\m \Omega_n$ for each $n$; or whether we can select a single/uniform chain $\m \Omega$ that will serve as $\m \Omega_n$ for all $n$, i.e.,  $\mathsf{LP_n}= \mathsf{V}(\m F_n (\m \Omega))$. Later, in Section~\ref{s: From F_N(Omega) to an N-Diagram}, we prove that such a uniform choice of a chain does exist and can be taken to be $\mathbb{Q} \overrightarrow{\times} \mathbb{Z}$, i.e.,   $\mathsf{LP_n}$ is generated by $\m F_n (\mathbb{Q} \overrightarrow{\times} \mathbb{Z})$, for all $n$ (generation theorem for $\mathsf{LP_n}$).

It follows from the wreath product representation $\m H \wr \m F_n (\mathbb{Z})$ that, even though $\m F_n(\mathbb{Z})$ does not generate the variety $\mathsf{LP_n}$, it plays an important role for $n$-periodic $\ell$-pregroups. It turns out that a good understanding of this algebra is needed for the goal of Section~\ref{s: From F_N(Omega) to an N-Diagram}, so 
Section~\ref{s: F_nZ} is devoted to $\m F_n(\mathbb{Z})$ and, among other things, to proving that its equational theory is decidable. In particular, we observe that each element $f$ of $\m F_n(\mathbb{Z})$ admits a useful decomposition $f=f^\circ \circ f^*$, where $f^\circ$ is an automorphism of $\mathbb{Z}$ ($f^\circ \in F_1(\mathbb{Z})$) and $f^*$ is a \emph{short} element of $\m F_n(\mathbb{Z})$. Furthermore, we consider diagrams (of \cite{GG}) that are $n$-periodic (a new notion) and show that they are suitable for studying $\m F_n(\mathbb{Z})$: we can go from a failure of an equation on $\m F_n(\mathbb{Z})$ to an $n$-periodic diagram and back. As the $n$-periodic diagrams obtained in this way can have unbounded \emph{heights} and a decision algorithm would need a known bound of this height to terminate, we focus on controlling the height of the automorphism part  $f^\circ$ of a function (the $f^*$ part is short by construction). We rephrase the height-control problem in the language or linear algebra and, working with linear systems, we prove that such an upper bound can indeed be computed, thus leading to decidability. 

In Section~\ref{s: From F_N(Omega) to an N-Diagram} we rely on the wreath product decomposition of the second representation theorem. In particular, each function in $\m F_n(\m \Omega)$, where $\m \Omega$ is an integral chain, has a global component and many local components, which are elements of $\m F_n(\mathbb{Z})$ and thus can be controlled by results in Section~\ref{s: F_nZ}. Since every integral chain has a lexicographic product structure  $\m J \overrightarrow{\times} \mathbb{Z}$, where $\m J$ is an arbitrary chain, we introduce a notion of diagram suited for $\m F_n(\m \Omega)$, called $n$-periodic partition diagram, that captures the natural partitioning induced by the lexicographic product on the diagram and has local parts that are $n$-periodic diagrams. We prove that a failure of an equation in $\m F_n(\m \Omega)$ yields a failure in an $n$-periodic partition diagram, we show that the latter can be taken to be short per local component, and we further  prove that any failure in an $n$-periodic partition diagram can be materialized in $\m F_n (\mathbb{Q} \overrightarrow{\times} \mathbb{Z})$. This way, we  obtain both the generation result $\mathsf{LP_n}=\mathsf{V}(\m F_n (\mathbb{Q} \overrightarrow{\times} \mathbb{Z}))$ and that the latter has a decidabile equational theory.

\section{Characterizing the elements of $\m F_n(\m{\Omega})$.}\label{s: elements of F_N} 

In this section, for every positive integer $n$,  we define the $n$-periodic $\ell$-pregroup of the form $\m F_n(\m \Omega)$ over a given chain $\m \Omega$. These algebras will play an important role in the representation and generation theorems for  $n$-periodic $\ell$-pregroups.

\subsection{The functional $\ell$-pregroup $\m F(\m \Omega)$.}\label{s: F(Omega)}

We first describe the functional/symmetric $\ell$-pregroup $\m F(\m \Omega)$ over a chain $\m \Omega$.
Given functions $f:\mathbf{P}\rightarrow\mathbf{Q}$ and $g:\mathbf{Q}\rightarrow\mathbf{P}$ between posets, we say that $g$ is a \emph{residual} for $f$, or that $f$ is a \emph{dual residual} for $g$, or that $(f,g)$ forms a \emph{residuated pair} if 
\[
f(a)\leq b\Leftrightarrow a\leq g(b)\text{, for all }a\in P,b\in Q.
\]

The residual and the dual residual of $f$ are unique when they exists\, and we denote them by $f^r$ and $f^{\ell}$, respectively; if they exist, they are given by
\[
f^{r}(b)=\max \{ a\in P :f(a)\leq b \}\quad \text{ and } \quad  f^{\ell}(a)=\min \{ b\in Q :a\leq f (b) \}.
\]

If $f$ has a residual, i.e., if $f^r$ exists,  then $f$ is called \emph{residuated} and if it has a dual residual, i.e., if $f^\ell$ exists, , it is called \emph{dually residuated}.  If the residual of $f^{r}$ exists, we denote it by $f^{rr}$ or $f^{(-2)}$ and we call it the \emph{second-order residual} of $f$. If the dual residual of $f^{\ell}$ exists, we denote it by $f^{\ell\ell}$ or $f^{(2)}$ and we call it the \emph{second-order dual residual} of $f$. More generally, $f^{r^n}$ or $f^{(-n)}$ is the \emph{$n$th-order residual} of $f$, if it exists, and $f^{\ell^n}$ or $f^{(n)}$ is the \emph{$n$th-order dual residual} of $f$, if it exists. 

Given a chain $\m \Omega$, we denote by $F(\m{\Omega})$ the set all maps on $\mathbf{\Omega}$ that have residuals and dual residuals of all orders. This set supports an $\ell$-pregroup $\m F(\m{\Omega})$, under composition, identity, the pointwise order, ${}^r$ and ${}^\ell$. Since the elements are functions with a chain a codomain and the order is pointwise,  $\m F(\m{\Omega})$ is a \emph{distributive} $\ell$-pregroup, i.e., its underlying lattice is distrubutive; we denote by $\mathsf{DLP}$ the variety of distrubutive $\ell$-pregroups. In \cite{GH} it is shown that every distrubutive $\ell$-pregroup can be embedded in $\m F(\m{\Omega})$  for some  chain $\m \Omega$.

 We write $a \prec b$, when $a$ is covered by $b$ (i.e., when $b$ is a cover of $a$) and also $a+1=b$ and $a=b-1$.
A chain $\mathbf{\Omega}$ is said to be \emph{integral} if 
it is isomorphic to the lexicographic product $\m{J} \overrightarrow{\times}\mathbb{Z}$ for some chain $\m 
 J$.  
 
 In \cite{GG} it is shown that every $\m F(\m{\Omega})$ embeds in $\m F(\overline{\m{\Omega}})$ for some integral chain $\overline{\m{\Omega}}$ containing $\m \Omega$. This results in an improved representation theorem for $\mathsf{DLP}$: every distrubutive $\ell$-pregroup can be embedded in $\m F(\m{\Omega})$  for some integral chain $\m \Omega$.

 \subsection{Functions in $\m F(\m \Omega)$, where $\m \Omega$ is integral}\label{s: elements of F}

We now take a closer look at the elements in $\m F(\m \Omega)$, where $\m \Omega$ is integral---in particular on how they behave under the application of iterated inverses.
 

\begin{lemma}\textnormal{\cite{GG}} \label{l: ffellf}
     If $f$ is an order-preserving map on a chain $\mathbf{\Omega}$ with residual $f^r$ and dual residual $f^{\ell}$, and $a,b \in \Omega$, then:
    \begin{enumerate}
         \item $f^{\ell}ff^{\ell}=f^{\ell}$, $f^{r}ff^{r}=f^{r}$, 
        $ff^{\ell}f=f$ and $ff^{r}f=f$.
        \item We have $b<f(a)$ iff $f^r(b)<a$. Also, $f(b)<a$ iff $b<f^{\ell}(a)$.
    \end{enumerate}
\end{lemma}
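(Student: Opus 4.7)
My plan is to handle part (2) first, as it is essentially immediate from the definitions, and then to derive part (1) by a standard sandwich argument using the ``unit'' inequalities for residuated pairs. For part (2), I would observe that $(f,f^r)$ being a residuated pair means $f(a)\leq b \Leftrightarrow a\leq f^r(b)$ for all $a,b \in \Omega$. Since $\m \Omega$ is a chain, its order is total, so $\neg(x\leq y)$ is equivalent to $y<x$; negating both sides of this equivalence therefore gives $b<f(a)\Leftrightarrow f^r(b)<a$. The identical strategy applied to the dual residuated pair $(f^\ell,f)$, which by definition satisfies $f^\ell(a)\leq b \Leftrightarrow a\leq f(b)$, yields $f(b)<a \Leftrightarrow b<f^\ell(a)$.

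For part (1), I would first record the standard ``unit'' inequalities that drop out of the adjunction. Instantiating $f(a)\leq b \Leftrightarrow a\leq f^r(b)$ at $a=f^r(b)$ produces $ff^r\leq \mathrm{id}$, while instantiating at $b=f(a)$ produces $\mathrm{id}\leq f^rf$. The analogous instantiations for the dual pair $(f^\ell,f)$ yield $f^\ell f\leq \mathrm{id}\leq ff^\ell$. I would next remark, in one line, that residuated maps and their adjoints are always order-preserving (from $g(a)\leq g(a)$ and the adjunction one reads off $a\leq f(g(a))$, whence monotonicity of $g$, and similarly for $f$), so pre- or post-composing any of these inequalities with $f$, $f^r$, or $f^\ell$ preserves them.

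The four identities then follow by sandwiching. For $ff^rf=f$, I would compose $ff^r\leq \mathrm{id}$ on the right with $f$ to obtain $ff^rf\leq f$, and compose $\mathrm{id}\leq f^rf$ on the left with $f$ to obtain $f\leq ff^rf$. Swapping the roles of $f^r$ and $f^\ell$ throughout the previous paragraph and appealing instead to $f^\ell f\leq \mathrm{id}\leq ff^\ell$ gives $ff^\ell f=f$. For $f^rff^r=f^r$, I would compose $\mathrm{id}\leq f^rf$ on the right with $f^r$ and $ff^r\leq \mathrm{id}$ on the left with $f^r$; the two inequalities sandwiched force equality. The identity $f^\ell f f^\ell=f^\ell$ is obtained by the same move applied to the dual pair.

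I do not anticipate any real obstacle, as this lemma is folklore from the theory of residuated pairs and amounts to careful bookkeeping about which side to compose on. The only mildly delicate point is the appeal to totality of $\m \Omega$ in part (2): the strict-order equivalences fail on a general poset, so the chain hypothesis is essential there, whereas part (1) would in fact go through on any poset once $f$ has a residual and a dual residual.
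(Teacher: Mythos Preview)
Your proof is correct. The paper does not supply its own proof of this lemma; it is stated as a citation from \cite{GG}, so there is nothing to compare against beyond noting that your argument is the standard one for residuated pairs and is exactly what one would expect to find in that reference.
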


 It is well known that a function $f:\mathbf{P}\rightarrow\mathbf{Q}$ is residuated iff
it is order-preserving 
and, for all $b \in Q$, the set $\{ a\in P :f(a)\leq b \} $ has a maximum in $\mathbf{Q}$; also, it is dually residuated iff
it is order-preserving 
and, for all $b \in Q$, the set $\{ b\in P :a\leq f (b) \}$ has a minimum. If a function is residuated, then it preserves existing joins, and if it is dually residuated, it preserves  existing meets.

We start by studying the effect of $f \mapsto f^{\ell \ell}$ on functions of $\m F(\m \Omega)$, where $\m \Omega$ is integral; this process is an $\ell$-pregroup automorphism (see \cite{GG}, for example).

\begin{lemma}\label{l: fellell=f+1}
Assume that $\m{\Omega}$ is an integral chain, $f:\Omega\rightarrow\Omega$ is an order-preserving map and $x\in\Omega$.
\begin{enumerate}
    \item If $f^{\ell\ell}$ exists, then $f^{\ell\ell}(x)=f(x-1)+1$.
    \item If $f^{rr}$ exists, then $f^{rr}(x)=f(x+1)-1$.
\end{enumerate}
\end{lemma}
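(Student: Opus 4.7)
The plan is to prove part (1) by unfolding the definition of $f^{\ell\ell}$ as a dual residual and then exploiting integrality to replace $\leq$ by a strict predecessor/successor relation. More precisely, by definition $f^{\ell\ell}(x)$ is the minimum of $S_x := \{b \in \Omega : x \leq f^\ell(b)\}$, so it suffices to identify this set as $\{b : f(x-1)+1 \leq b\}$, whose minimum is $f(x-1)+1$.

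The first key observation is that in an integral chain, for all $x,y \in \Omega$ we have $x \leq y$ if and only if $x-1 < y$ (both directions follow because $x$ covers $x-1$). Applying this to the membership condition $x \leq f^\ell(b)$ in the definition of $S_x$ gives the equivalent condition $x-1 < f^\ell(b)$. Now I invoke Lemma~\ref{l: ffellf}(2), in the form $c < f^\ell(d) \iff f(c) < d$, with $c = x-1$ and $d = b$, to rewrite this as $f(x-1) < b$. A second application of integrality turns $f(x-1) < b$ into $f(x-1)+1 \leq b$, as desired. Taking the minimum of $[f(x-1)+1,\infty)$ then yields $f^{\ell\ell}(x) = f(x-1)+1$.

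Part (2) is completely dual: I would start from $f^{rr}(x) = \max\{a : f^r(a) \leq x\}$, use the integrality equivalence $a \leq x \iff a < x+1$ applied on the right-hand inequality to get $f^r(a) < x+1$, then apply the other half of Lemma~\ref{l: ffellf}(2), namely $c < f(d) \iff f^r(c) < d$ (with $c = a$, $d = x+1$), to obtain $a < f(x+1)$, and finally use integrality once more to conclude $a \leq f(x+1)-1$. The maximum of $(-\infty, f(x+1)-1]$ is $f(x+1)-1$.

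There is no real obstacle, as both parts are essentially one-line calculations once one has set up the right equivalences; the only thing that requires a moment of care is to make sure the existence hypotheses on $f^{\ell\ell}$ (resp.\ $f^{rr}$) really do guarantee that $f^\ell$ (resp.\ $f^r$) exists, so that the definitions unfold, and that Lemma~\ref{l: ffellf}(2) applies to $f$, which is order-preserving with the required residuals. Integrality is used twice in each part, once to pass from $\leq$ to a strict covering relation and once to pass back.
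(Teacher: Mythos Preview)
Your argument is correct and considerably more direct than the paper's. The paper proceeds by a case split on whether $f^\ell f(x) = x$: in the first case it shows $f(x-1) < f(x)$ and then computes $f^{\ell\ell}(x)$ from the definition; in the second it introduces an auxiliary point $y = f^\ell(f(x)+1)$, reduces to the first case at $y$, and then separately verifies $f(y-1) = f(x-1)$. Your approach sidesteps the split entirely: by chaining the integrality equivalence $x \leq y \iff x-1 < y$ with the adjunction contrapositive $c < f^\ell(d) \iff f(c) < d$, you identify the full set $\{b : x \leq f^\ell(b)\}$ as the up-set of $f(x-1)+1$ in one stroke. This is both shorter and more transparent.

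One small point worth making explicit: Lemma~\ref{l: ffellf} is stated under the hypothesis that both $f^r$ and $f^\ell$ exist, whereas in part~(1) you only have $f^\ell$ (and dually in part~(2) only $f^r$). This is not a real gap, since the specific equivalence you invoke, $f(c) < d \iff c < f^\ell(d)$, is nothing but the contrapositive of the defining adjunction $f^\ell(d) \leq c \iff d \leq f(c)$ and requires only the existence of $f^\ell$; but it would be cleaner to say so rather than to cite the lemma as a black box.
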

\begin{proof}
   (1) We first show the claim for the case where $f^{\ell}f(x)=x$. 
   Since $x=f^{\ell}f(x)=\min \{ b\in \Omega :f(x)\leq f (b) \}$, we get $x-1\notin\{ b\in \Omega :f(x)\leq f (b) \}$, hence $f(x)\nleq f(x-1)$; since $\m \Omega$ is a chain we get $f(x-1)<f(x)$.
   Therefore,  
    $f(x-1)<f(x-1)+1\leq f(x)$, which yields $f^{\ell}(f(x-1)+1)=x$, by the definition of $f^{\ell}$. On the other hand, Lemma~\ref{l: ffellf}(1) yields $ff^{\ell}f(x-1)=f(x-1)<f(x-1)+1$, so by Lemma~\ref{l: ffellf}(2) we get $f^{\ell}f(x-1)< f^{\ell}(f(x-1)+1)=x$. Therefore, $f^{\ell}(f(x-1))<x$ and $f^{\ell}(f(x-1)+1)\leq x$; taking into account that $f(x-1)\prec f(x-1)+1$ we obtain $f^{\ell\ell}(x)=\min \{ b\in \Omega :x\leq f^{\ell} (b) \}=f(x-1)+1$. 
    
    Now, we consider the case $ f^{\ell}f(x)\neq x$. Since  $f^{\ell}f(x)\leq x$ by residuation, we get $f^{\ell}f(x)<x$. Also, since $f(x)<f(x)+1$,  Lemma~\ref{l: ffellf}(2) yields $x<f^{\ell}(f(x)+1)$; so $f^{\ell}f(x)<x<f^{\ell}(f(x)+1)$. Hence, by the definition of $f^{\ell\ell}$, we have $f^{\ell\ell}(x)=f(x)+1=f^{\ell\ell}(f^{\ell}(f(x)+1))$. 
    So, for $y:=f^{\ell}(f(x)+1)$ we have $f^{\ell\ell}(x)=f^{\ell\ell}(y)$.
    Consequently, using Lemma~\ref{l: ffellf}(1) we obtain $f^{\ell}f(y)=f^{\ell}ff^{\ell}(f(x)+1)=f^{\ell}(f(x)+1)=y$. In particular, the element $y$ satisfies the conditions of the previous case, so  $f^{\ell\ell}(x)=f^{\ell\ell}(y)=f(y-1)+1$. Below we prove that $f(y-1)=f(x-1)$, which  yields $f^{\ell\ell}(x)=f(y-1)+1=f(x-1)+1$, as desired.

   Since $y-1<y=f^{\ell}(f(x)+1)$,  Lemma~\ref{l: ffellf}(2) gives $f(y-1)<f(x)+1$, so $f(y-1)\leq f(x)$. 
   As mentioned above, we have $f^{\ell}f(x)<x$, hence $f^{\ell}f(x)\leq x-1$. Therefore, $f(x)=ff^{\ell}f(x)\leq f(x-1)$ by using Lemma~\ref{l: ffellf}(1). In summary, we have $f(y-1)\leq f(x)\leq f(x-1)$. Since $x<y$, we also get the other inequality: $f(x-1)\leq f(y-1)$.     
    Statement (2) is the dual of (1).
\end{proof}

Using induction and Lemma~\ref{l: fellell=f+1} we get the following formula.

\begin{lemma}\label{l:2N}
  If $\m{\Omega}$ is an integral chain, $f\in F(\m{\Omega})$ and $n\in\mathbb{Z}$, then $f^{(2n)}(x)=f(x-n)+n$ for all $x\in\Omega$.   
\end{lemma}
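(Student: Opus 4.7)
The plan is a straightforward induction on $|n|$, splitting into the two cases $n \geq 0$ and $n \leq 0$ and invoking the two parts of Lemma~\ref{l: fellell=f+1} respectively. The base case $n=0$ is immediate, since $f^{(0)}=f$ and $f(x-0)+0=f(x)$.

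For the inductive step with $n>0$, I would unfold the definition of the iterated dual residual as $f^{(2n)}=(f^{(2(n-1))})^{\ell\ell}$, noting that every $f^{(k)}$ is an order-preserving map on $\m\Omega$ and lies in $F(\m\Omega)$, so that Lemma~\ref{l: fellell=f+1}(1) applies to it. Setting $g:=f^{(2(n-1))}$, that lemma gives
\[
f^{(2n)}(x)=g^{\ell\ell}(x)=g(x-1)+1=f^{(2(n-1))}(x-1)+1,
\]
and the induction hypothesis turns the right-hand side into $f\bigl((x-1)-(n-1)\bigr)+(n-1)+1=f(x-n)+n$, as required.

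The case $n<0$ is entirely symmetric: write $f^{(2n)}=(f^{(2(n+1))})^{rr}$ and apply Lemma~\ref{l: fellell=f+1}(2), which tells us $g^{rr}(x)=g(x+1)-1$. The induction hypothesis at $n+1$ then gives
\[
f^{(2n)}(x)=f^{(2(n+1))}(x+1)-1=f\bigl((x+1)-(n+1)\bigr)+(n+1)-1=f(x-n)+n.
\]

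There is no real obstacle here; the substance of the argument is entirely in Lemma~\ref{l: fellell=f+1}, and the present statement is just the iterated form obtained by induction. The only minor bookkeeping point is to ensure that each intermediate $f^{(2k)}$ is itself an element of $F(\m\Omega)$ so that Lemma~\ref{l: fellell=f+1} is applicable at every step, but this is guaranteed by the assumption $f\in F(\m\Omega)$, which by definition requires residuals and dual residuals of all orders to exist.
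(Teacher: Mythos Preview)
Your proposal is correct and follows exactly the approach the paper indicates: the paper's proof is the single sentence ``Using induction and Lemma~\ref{l: fellell=f+1} we get the following formula,'' and you have simply spelled out that induction in full. There is nothing to add.
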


As usual, we denote the inverse image of a set $X\subseteq\Omega$ via a function $f$ by $f^{-1}[X]$; we write $f^{-1}[a]$ for $f^{-1}[\{a\}]$.


\begin{lemma}\textnormal{\cite{GG}} \label{l: bounded preimage}
Given a chain $\mathbf{\Omega}$, a map $f$  on $\Omega$ is residuated and dually residuated iff $f$ is order-preserving and for all $a\in \Omega$: 
\begin{enumerate}
    \item If $a\in f[\Omega]$, then $f^{-1}[ a]=[b,c]$, for some $b \leq c$ in $\Omega$.
   \item  If $a\notin f[\Omega]$, there exists $b,c\in\Omega$ such that, $b\prec c$ and $ \ensuremath{a\in(f(b),f(c))}$.
\end{enumerate}
In the case (1), $f^\ell(a)=b$ and $f^r(a)=c$; in the case (2), $f^\ell(a)=c$ and $f^r(a)=b$.
Moreover, if $\m \Omega$ is integral then this is further equivalent to $f \in F(\m \Omega)$.
\end{lemma}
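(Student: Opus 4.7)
The plan is to prove the stated biconditional in two directions, extracting the formulas for $f^\ell(a)$ and $f^r(a)$ in each of the two cases along the way, and then to deduce the ``moreover'' clause by induction using the integral structure of $\m \Omega$.

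For the forward direction, assume $f$ is residuated and dually residuated. Then $f$ preserves existing joins and meets, so in particular it is order-preserving. Fix $a \in \Omega$. If $a = f(x) \in f[\Omega]$, set $b := f^\ell(a)$ and $c := f^r(a)$. Because $\m \Omega$ is a chain, the residuation biconditionals from Lemma~\ref{l: ffellf} yield $\{y : f(y) \leq a\} = \{y : y \leq c\}$ and $\{y : a \leq f(y)\} = \{y : b \leq y\}$; intersecting and using that $x$ lies in both sets forces $b \leq x \leq c$ and $f^{-1}[a] = [b,c]$, giving (1). If $a \notin f[\Omega]$, set $c := f^\ell(a)$ and $b := f^r(a)$; any $y > b$ satisfies $f(y) > a$ by maximality of $b$, any $y < c$ satisfies $f(y) < a$ by minimality of $c$, so no element of $\Omega$ lies strictly between $b$ and $c$. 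This forces $b \prec c$ and $a \in (f(b), f(c))$, giving (2).

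For the converse, suppose $f$ is order-preserving and satisfies (1) and (2). Fix $a$. In case (1), with $f^{-1}[a] = [b,c]$, any $y \leq c$ gives $f(y) \leq f(c) = a$, and any $y > c$ gives $f(y) \geq f(c) = a$ but $y \notin f^{-1}[a]$, hence $f(y) > a$; so $\{y : f(y) \leq a\}$ has maximum $c$, i.e., $f^r(a) = c$, and symmetrically $f^\ell(a) = b$. In case (2), with $b \prec c$ and $f(b) < a < f(c)$, order-preservation and the absence of points strictly between $b$ and $c$ force $\{y : f(y) \leq a\} = (\leftarrow, b]$ and $\{y : a \leq f(y)\} = [c, \rightarrow)$, so $f^r(a) = b$ and $f^\ell(a) = c$.

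For the ``moreover'' part, one direction is trivial since elements of $F(\m \Omega)$ are residuated and dually residuated by definition. For the other, I would show by induction that every iterated inverse $f^{\ell^n}$ and $f^{r^n}$ exists; since $f^{\ell r} = f = f^{r\ell}$ follows from Lemma~\ref{l: ffellf}(1) and the characterization above, it suffices to show that $f^\ell$ (and dually $f^r$) falls again under the hypotheses of the lemma. The natural candidate is the shift $g(x) := f(x-1) + 1$ suggested by Lemma~\ref{l: fellell=f+1}, which is well-defined precisely because $\m \Omega$ is integral. One then checks the adjunction $f^\ell(a) \leq b \Leftrightarrow a \leq g(b)$ directly, using the explicit formulas for $f^\ell(a)$ from the two cases above; this shows $f^{\ell\ell} = g$ exists, and a symmetric argument gives $f^{rr}$. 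Iterating, every $f^{\ell^n}$ and $f^{r^n}$ exists, so $f \in F(\m \Omega)$.

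The main obstacle is the ``moreover'' step. Checking that $f^\ell$ again satisfies (1) and (2) requires tracking how the two cases for $f$ convert into cases for $f^\ell$: a covering gap $b \prec c$ in case (2) for $f$ becomes an interval preimage $f^{\ell^{-1}}[c] = $ part of case (1) for $f^\ell$, while an interval preimage for $f$ produces the covering gap for $f^\ell$. Rather than doing this conversion case-by-case (which is error-prone because the two cases swap roles), I would verify the adjunction for $g(x) = f(x-1)+1$ directly, which compresses the bookkeeping into a single argument that leverages integrality essentially through the availability of the operations $x \mapsto x \pm 1$.
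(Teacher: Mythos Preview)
The paper does not supply a proof of this lemma; it is quoted from \cite{GG}, so there is no in-paper argument to compare against. Your proof of the main biconditional is correct and is the natural one.

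For the ``moreover'' clause, your strategy of setting $g(x) := f(x-1)+1$ and identifying it as $f^{\ell\ell}$ is exactly right, but the adjunction you wrote is backwards. The relation $f^\ell(a) \leq b \Leftrightarrow a \leq g(b)$ would make $g$ the \emph{residual} of $f^\ell$, and that is already $f$. To exhibit $g$ as the \emph{dual residual} $(f^\ell)^\ell$ you need instead
\[
g(a) \leq b \;\Leftrightarrow\; a \leq f^\ell(b).
\]
This does hold when $\m\Omega$ is integral: $a \leq f^\ell(b)$ iff $f^\ell(b) \not\leq a-1$ iff (by the adjunction $(f^\ell,f)$) $b \not\leq f(a-1)$ iff $f(a-1)+1 \leq b$, i.e., $g(a) \leq b$. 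With this correction your induction goes through: $f^\ell$ then has both a residual (namely $f$) and a dual residual (namely $g$), so it again satisfies the hypotheses of the lemma, and iterating yields all higher residuals; the dual argument handles $f^{rr}$.
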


A simple characterization of the elements of $\m{F(\mathbb{Z})}$ is given by the next lemma.

\begin{lemma}\textnormal{\cite{GG}}\label{l:F(Z)}
 If $f$ is a function on $\mathbb{Z}$, then $f\in F(\mathbb{Z})$ iff $f$ is order-preserving and  $f^{-1}[a]$ is a bounded interval for all $a\in f[\mathbb{Z}]$ iff $f$ is an order-preserving and $f^{-1}[a]$ is a finite set for all $a\in f[\mathbb{Z}]$.
\end{lemma}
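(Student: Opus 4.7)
The plan is to reduce everything to Lemma~\ref{l: bounded preimage} applied to the chain $\mathbb{Z}$, which is integral (take the $\m J$ in the definition to be a singleton). That lemma then tells us that $f\in F(\mathbb{Z})$ iff $f$ is order-preserving and satisfies its two preimage conditions. Its condition (1) gives exactly $f^{-1}[a]=[b,c]$ for $a\in f[\mathbb{Z}]$, which in $\mathbb{Z}$ is automatically a bounded (indeed finite) interval; this yields the forward direction of our first ``iff''.

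The equivalence between ``$f^{-1}[a]$ is a bounded interval'' and ``$f^{-1}[a]$ is a finite set'' is essentially automatic in the presence of order-preservation. Indeed, if $f(b)=f(c)=a$ and $b\leq k\leq c$ in $\mathbb{Z}$, then monotonicity forces $f(k)=a$, so $f^{-1}[a]$ is always convex, hence an interval in $\mathbb{Z}$; and an interval in $\mathbb{Z}$ is bounded iff it is finite.

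The main content, and the only nontrivial step, is to go from ``$f$ is order-preserving and every $f^{-1}[a]$ ($a\in f[\mathbb{Z}]$) is bounded'' to $f\in F(\mathbb{Z})$. By Lemma~\ref{l: bounded preimage} it suffices to verify its condition (2), i.e., that for each $a\notin f[\mathbb{Z}]$ there exist $b\prec c$ in $\mathbb{Z}$ with $f(b)<a<f(c)$. The key observation is that the bounded-preimage hypothesis forces $f[\mathbb{Z}]$ to be unbounded both above and below: if $f$ were, say, bounded above by some $M$, then the non-decreasing integer sequence $(f(n))_{n\in\mathbb{Z}}$ would stabilize as $n\to+\infty$, making the stable value have an unbounded preimage, contradicting the hypothesis; the case bounded below is symmetric.

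Given this, for $a\notin f[\mathbb{Z}]$ the sets $L:=\{n:f(n)<a\}$ and $U:=\{n:f(n)>a\}$ are both nonempty (by unboundedness below and above respectively) and partition $\mathbb{Z}$; by order-preservation every element of $L$ is below every element of $U$, so $b:=\max L$ and $c:=\min U$ exist. Then $f(b+1)\geq a$ and $a\notin f[\mathbb{Z}]$ give $f(b+1)>a$, so $b+1\in U$ and $c\leq b+1$; combined with $b<c$ this forces $c=b+1$, i.e., $b\prec c$, and by construction $f(b)<a<f(c)$. I expect the only place one has to be a little careful is precisely this unboundedness argument: it is what prevents an order-preserving function with all preimages bounded from having an ``infinite jump at infinity'' that would secretly fail dual residuation.
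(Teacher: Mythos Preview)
Your argument is correct. Note, however, that the paper does not supply its own proof of this lemma: it is quoted from \cite{GG} and stated without proof, so there is nothing in the present paper to compare against. Your reduction to Lemma~\ref{l: bounded preimage} (also imported from \cite{GG}) is the natural route, and the one nontrivial step---showing that bounded preimages force $f[\mathbb{Z}]$ to be unbounded in both directions, so that condition~(2) of Lemma~\ref{l: bounded preimage} is automatically satisfied---is handled cleanly.
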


 \subsection{The $n$-periodic subalgebra $\m F_n(\m \Omega)$ of $\m F(\m \Omega)$.}\label{s: FnOmega}

  \medskip
In the language of $\ell$-pregroups, we define $x^{(n)}$ for all $n\in\mathbb{Z}$, by $x^{(0)}:=x$, $x^{(n+1)}:=(x^{(n)})^{\ell}$ if $n>0$ and $x^{(n-1)}:=(x^{(n)})^{r}$ if $n<0$. Note that this notation agrees with the notation for $f^{(n)}$, when this function exists. 
Given $n \in \mathbb{Z}^+$, we say that an element $x$ is $n$-periodic if $x^{\ell^n}=x^{r^n}$, i.e., $x^{(n)}=x^{(-n)}$, or equivalently $x^{(2n)}=x$; an  $\ell$-pregroup is \emph{$n$-periodic} if all of its elements are, and it is called \emph{periodic} if it is $n$-periodic for some $n$.
 In \cite{GJ} it is shown that every periodic $\ell$-pregroup is in fact distributive.


For every chain $\m \Omega$, $F_n(\m \Omega)$ denotes the set of all $n$-periodic elements of $\m F(\m \Omega)$; the following lemma shows that this set supports a subalgebra $\m F_n(\m \Omega)$  of $\m F(\m \Omega)$.

\begin{lemma} \label{l: F_N(Omega)}
Let $n$ be a positive integer.
\begin{enumerate} 
\item 
The $n$-periodic elements of any $\ell$-pregroup form a $n$-periodic subalgebra.
\item For every chain $\m \Omega$, $\m F_n(\m \Omega)$ is an $n$-periodic subalgebra of $\m F(\m \Omega)$.
\end{enumerate}
\end{lemma}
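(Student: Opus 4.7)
The plan is to prove (1) equationally; part (2) then follows immediately by applying (1) to $\m F(\m \Omega)$, since $F_n(\m \Omega)$ is by definition the set of $n$-periodic elements of $F(\m \Omega)$, and an $n$-periodic subalgebra is of course $n$-periodic. To streamline the verifications, I first replace the defining condition $x^{(n)}=x^{(-n)}$ by the equivalent single equation $x^{(2n)}=x$, using that ${}^\ell$ and ${}^r$ are mutually inverse operations (from $x^{\ell r}=x=x^{r\ell}$), which also yields the index-additivity identity $(x^{(k)})^{(m)}=x^{(k+m)}$ for all integers $k,m$.

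Granting this, closure of the $n$-periodic elements under the constant $1$ and under the two inverse operations is essentially automatic: $1^\ell=1^r=1$ gives $1^{(2n)}=1$, while $x^{(2n)}=x$ yields $(x^\ell)^{(2n)}=x^{(2n+1)}=(x^{(2n)})^\ell=x^\ell$, and symmetrically for $x^r$. For multiplication, the $\ell$-pregroup identity $(xy)^\ell=y^\ell x^\ell$, applied twice, gives $(xy)^{(2)}=x^{(2)}y^{(2)}$; iterating yields $(xy)^{(2n)}=x^{(2n)}y^{(2n)}$, so products of $n$-periodic elements are $n$-periodic.

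The lattice operations require one extra observation, namely that ${}^\ell$ is order-reversing. From $x\leq y$ one obtains $y^\ell x \leq y^\ell y \leq 1$, and then $y^\ell \leq y^\ell x x^\ell \leq x^\ell$ using $1\leq xx^\ell$. Being an order-reversing bijection (with inverse ${}^r$), the operation ${}^\ell$ swaps joins and meets: $(x\vee y)^\ell=x^\ell \wedge y^\ell$. A second application restores the join, so $(x\vee y)^{(2)}=x^{(2)}\vee y^{(2)}$, and by induction $(x\vee y)^{(2n)}=x^{(2n)}\vee y^{(2n)}$; the meet case is symmetric, completing (1). I expect this last step to be the only one requiring real thought, since $\vee$ and $\wedge$ do not appear in the $n$-periodicity equation itself---the key point is that a double application of ${}^\ell$ acts as a lattice automorphism, which matches up perfectly with the $x^{(2n)}=x$ reformulation of $n$-periodicity.
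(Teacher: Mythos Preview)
Your proof is correct and follows essentially the same approach as the paper: both establish that $x\mapsto x^{\ell\ell}$ (hence $x\mapsto x^{(2n)}$) is an $\ell$-pregroup endomorphism and that the $n$-periodic elements are precisely its fixed points, which therefore form a subalgebra. The paper packages this a bit more compactly by citing the De Morgan laws and antihomomorphism identities and then invoking the general fact that the fixed points of an endomorphism form a subalgebra, whereas you verify closure under each operation explicitly; the content is the same.
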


\begin{proof}
(1) As mentioned in the introduction, $\ell$-pregroups are involutive residuated lattices, so they satisfy involutivity $x^{\ell r}=x=x^{r \ell}$, the De Morgan laws 
\begin{center}
 $(x \jn y)^\ell=x^\ell \mt y^\ell$,  $(x \mt y)^\ell=x^\ell \jn y^\ell$,
$(x \jn y)^r=x^r \mt y^r$,  $(x \mt y)^r=x^r \jn y^r$,   
\end{center}
 and, since $x+y=xy$, the inverses are monoid antihomomorphisms 
 \begin{center}
     $(xy)^\ell =y^\ell x^\ell$,  $(xy)^r =y^r x^r$, and $1^\ell=1^r=1$; 
 \end{center}
 see \cite{GJ}.
 Therefore, the map $x \mapsto x^{\ell \ell}$ is an endomorphism of the $\ell$-pregroup $\m L$ (actually an automorphism), hence so are all of its (positive and negative) powers. In particular, $x \mapsto x^{(2n)}$ is an endomorphism, hence the set of its fixed points is a subalgebra (for example, if $x^{(2n)}=x$ and $y^{(2n)}=y$, then $(xy)^{(2n)}=x^{(2n)}y^{(2n)}=xy$). Clearly, an element of $\m L$ is $n$-periodic iff it is a fixed point of $x \mapsto x^{(2n)}$.  As a result, the set of all $n$-periodic elements of $\m L$ forms a subalgebra of $\m L$ and all of its elements are $n$-periodic.
 (2) follows by applying (1) to $\m F(\m \Omega)$. 
 \end{proof}

We now establish our first representation theorem for  $n$-periodic $\ell$-pregroups, which will be useful in obtaining the generation and decidability results for $n$-periodic $\ell$-pregroups; the second representation theorem will be Theorem~\ref{t: repnper}.

\begin{theorem} \label{t: emb to F_N(Omega)}
Given  $n \in \mathbb{Z}^+$, every $n$-periodic $\ell$-pregroup can be embedded in  $\m F_n(\m \Omega)$, for some integral chain $\m{\Omega}$, i.e., in  $\m F_n(\m{J}\overrightarrow{\times}\mathbb{Z})$, for some  chain $\m{J}$.
\end{theorem}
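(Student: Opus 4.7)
The plan is to assemble the theorem from three pieces already cited in the excerpt: periodic implies distributive (from \cite{GJ}), the Cayley/Holland-style embedding into $\m F(\m \Omega)$ (from \cite{GH}), and the sharpening of the chain to an integral one (from \cite{GG}). The only new observation needed is that the embedding automatically lands inside the $n$-periodic subalgebra $\m F_n(\m \Omega)$ identified in Lemma~\ref{l: F_N(Omega)}.

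In more detail, let $\m L$ be an $n$-periodic $\ell$-pregroup. First, invoking \cite{GJ}, every periodic $\ell$-pregroup is distributive, so $\m L \in \mathsf{DLP}$. Next, applying the representation theorem for $\mathsf{DLP}$ proved in \cite{GG} (the strengthening of \cite{GH} in which the chain can be taken integral), I obtain an $\ell$-pregroup embedding $\varphi \colon \m L \hookrightarrow \m F(\m \Omega)$ for some integral chain $\m \Omega$; by the definition of integral chain recalled in Section~\ref{s: F(Omega)}, we may write $\m \Omega \cong \m J \overrightarrow{\times} \mathbb{Z}$ for a suitable chain $\m J$.

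It then remains to observe that $\varphi[L] \subseteq F_n(\m \Omega)$. Since $\varphi$ is an $\ell$-pregroup homomorphism, it commutes with the operations $^{\ell}$ and $^{r}$, hence with all iterates $x \mapsto x^{(m)}$. Therefore, for every $a \in L$,
\[
\varphi(a)^{(n)} = \varphi\bigl(a^{(n)}\bigr) = \varphi\bigl(a^{(-n)}\bigr) = \varphi(a)^{(-n)},
\]
where the middle equality uses that $a$ is $n$-periodic in $\m L$. Thus every element of $\varphi[L]$ is $n$-periodic in $\m F(\m \Omega)$, i.e., lies in $F_n(\m \Omega)$. By Lemma~\ref{l: F_N(Omega)}(2), $\m F_n(\m \Omega)$ is a subalgebra of $\m F(\m \Omega)$, so $\varphi$ corestricts to an $\ell$-pregroup embedding $\m L \hookrightarrow \m F_n(\m J \overrightarrow{\times} \mathbb{Z})$, as required.

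I do not expect any real obstacle here: the representation up to an integral chain is already established in \cite{GG}, and the corestriction step is purely formal once one knows that $n$-periodicity is an equational condition in the signature of $\ell$-pregroups and that $F_n(\m \Omega)$ is closed under all the operations. The only point worth double-checking is that the reference \cite{GJ} indeed gives distributivity for every $n$, not only specific small ones, which is exactly what is asserted in the paragraph preceding Section~\ref{s: elements of F_N}.
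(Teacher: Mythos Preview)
Your proposal is correct and follows essentially the same approach as the paper: invoke \cite{GJ} for distributivity, \cite{GG} for the embedding into $\m F(\m \Omega)$ with $\m \Omega$ integral, and then observe that the embedding preserves the equation $x^{(n)}=x^{(-n)}$, so the image lands in $\m F_n(\m \Omega)$. The paper's proof is just a terser version of exactly this argument.
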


\begin{proof}
Let $\m L$ be an $n$-periodic $\ell$-pregroup. By \cite{GJ} $\m L$ is a distributive, so by \cite{GG} $\m L$ embeds in $\m F(\m \Omega)$, for some integral chain $\m{\Omega}$. Since $\m L$ satisfies the equation $x^{(2n)}=x$, the image of the embedding also satisfies the equation; so the image is contained in $\m F_n(\m \Omega)$. The last part of the theorem follows from the fact that every integral chain is isomorphic to $\m{J} \overrightarrow{\times}\mathbb{Z}$ for some chain $\m 
 J$. 
\end{proof}

 We will dedicate the remainder of this subsection to characterizing  the functions in $\m{F}_n(\m{\Omega})$, where $\m{\Omega}$ is an integral chain.


 \begin{lemma}\label{l:N-periodic}
    If $\mathbf{\Omega}$ is an integral chain, $f\in F(\m\Omega)$ and $n \in \mathbb{Z}^+$, each of the following are equivalent to $f\in F_n(\m\Omega)$:
    \begin{enumerate}
        \item For all $x\in\Omega$, $f(x)=f(x-n)+n$. 
        \item For all $x\in\Omega$  and $k\in\mathbb{Z}$, $f(x)=f(x-kn)+kn$. 
         \item For all $x,y\in\Omega$:  
             \begin{enumerate}
                 \item         $x\leq y+ n \Rightarrow f(x)\leq f(y)+n$ and 
                 \item $x+ n\leq y \Rightarrow f(x)+n\leq f(y)$.
            \end{enumerate}
        \item For all $x,y\in\Omega$ and $k\in\mathbb{Z}$:  $x\leq y+ kn \Rightarrow f(x)\leq f(y)+kn$. 
    \end{enumerate}
  \end{lemma}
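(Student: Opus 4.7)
My plan is to use statement (1) as a hub and establish each of the other equivalences against it, with Lemma~\ref{l:2N} doing all the semantic work of translating the operator equation $f^{(2n)}=f$ into a pointwise identity. Since the setting is an integral chain and $f\in F(\m\Omega)$, Lemma~\ref{l:2N} gives $f^{(2n)}(x)=f(x-n)+n$ for every $x\in\Omega$, so the condition $f\in F_n(\m\Omega)$, which says exactly $f^{(2n)}=f$, is literally equivalent to statement~(1). This absorbs the semantic content of the lemma and reduces the rest to elementary manipulations with shifts by multiples of $n$ and monotonicity of $f$.

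Next I would show (1)$\Leftrightarrow$(2). The direction (2)$\Rightarrow$(1) is trivial by taking $k=1$. For (1)$\Rightarrow$(2), I iterate: applying (1) to $x-n$ in place of $x$ yields $f(x-n)=f(x-2n)+n$, and substituting back gives $f(x)=f(x-2n)+2n$; a routine induction handles all $k\geq 0$. To pass to negative $k$, I substitute $x+|k|n$ for $x$ in the positive case and rearrange, using that $\m\Omega$ is integral so $x\pm kn$ is always well-defined.

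For (1)$\Leftrightarrow$(3), I use that $f$, being residuated, is order-preserving. Assuming (1): if $x\leq y+n$, then $x-n\leq y$, whence $f(x-n)\leq f(y)$ and so $f(x)=f(x-n)+n\leq f(y)+n$, giving (3a); similarly $x+n\leq y$ yields $x\leq y-n$, so $f(x)\leq f(y-n)=f(y)-n$, which is (3b) after adding $n$ to both sides. Conversely, the trivial inequalities $x\leq(x-n)+n$ and $(x-n)+n\leq x$ feed into (3a) and (3b) respectively to give $f(x)\leq f(x-n)+n$ and $f(x-n)+n\leq f(x)$, so (1) follows.

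Finally, for (2)$\Leftrightarrow$(4), the direction (2)$\Rightarrow$(4) is the same trick: $x\leq y+kn$ gives $x-kn\leq y$, hence $f(x-kn)\leq f(y)$, and adding $kn$ and invoking (2) yields $f(x)\leq f(y)+kn$; and (4)$\Rightarrow$(3) is immediate by taking $k=\pm 1$, closing the loop via (3)$\Rightarrow$(1)$\Rightarrow$(2). I do not foresee a serious obstacle: once Lemma~\ref{l:2N} is in hand, every step amounts to a shift by a multiple of $n$ combined with the monotonicity of $f$. The only point requiring mild care is tracking signs in the negative-$k$ case of (2) and in (3b), where the asymmetry between $f(y)+n$ and $f(y)-n$ must be handled correctly.
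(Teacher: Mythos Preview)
Your proposal is correct and follows essentially the same approach as the paper: both use Lemma~\ref{l:2N} to identify $f\in F_n(\m\Omega)$ with condition~(1), handle (1)$\Leftrightarrow$(2) by induction, and close the remaining equivalences with the same shift-and-monotonicity manipulations. The paper organizes the cycle as (2)$\Rightarrow$(4)$\Rightarrow$(3)$\Rightarrow$(1) rather than proving (1)$\Rightarrow$(3) directly as you do, but this is a cosmetic difference.
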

  \begin{proof}
       The equivalence to (1) follows from $n$-periodicity and Lemma~\ref{l:2N}. Also, $(1)\Rightarrow(2)$ follows by induction and the converse is obvious.  $(4)\Rightarrow(3)$ is obvious, as well.
  
      For $(2)\Rightarrow(4)$, if  $x,y\in\Omega$, $k\in\mathbb{Z}$ and $x\leq y +kn$, then by order preservation and (2) we have $f(x)\leq f(y+kn)=f(y+kn-kn)+kn=
      f(y)+kn$.
    
      For $(3)\Rightarrow(1)$, if $x\in \Omega$, then $x\leq(x-n)+n$, so  $f(x)\leq f(x-n)+n$ by (3a). Since $x-n+n\leq x$, (3b) gives $f(x-n)+n\leq f(x)$; so $f(x-n)+n= f(x)$. 
\end{proof}

  
    

In particular, for $\m F_{n}(\mathbb{Z})$ we make use of the characterization for $\m F(\mathbb{Z})$, as well.

\begin{lemma}\label{l: F_N(Z)}
    For all $f:\mathbb{Z}\rightarrow\mathbb{Z}$, and $n \in \mathbb{Z}^+$, we have  $f\in F_{n}(\mathbb{Z})$ iff
    \begin{enumerate}
        \item for all $a\in f[\mathbb{Z}]$, $f^{-1}[a]$ is finite  (equivalently, a bounded interval) and,
        \item for all $x,y,k\in\mathbb{Z}$: $x\leq y+kn \Rightarrow f(x)\leq f(y)+kn$.
  \end{enumerate}
\end{lemma}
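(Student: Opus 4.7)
The plan is to derive this characterization as a straightforward combination of Lemma~\ref{l:F(Z)} (which characterizes $F(\mathbb{Z})$ among all functions on $\mathbb{Z}$) with Lemma~\ref{l:N-periodic} (which, under the assumption $f \in F(\m\Omega)$ for $\m\Omega$ integral, characterizes $n$-periodicity). Since $\mathbb{Z}$ is itself an integral chain (e.g., $\mathbb{Z} \cong \m 1 \overrightarrow{\times} \mathbb{Z}$), both lemmas apply, and the equivalence $f \in F_n(\mathbb{Z}) \Leftrightarrow [f \in F(\mathbb{Z}) \text{ and } f \text{ is } n\text{-periodic}]$ holds by the very definition of $F_n$.

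For the forward direction, assume $f \in F_n(\mathbb{Z})$. Then $f \in F(\mathbb{Z})$, so Lemma~\ref{l:F(Z)} gives condition (1). Since $\mathbb{Z}$ is integral and $f$ is $n$-periodic, the equivalence $(4)$ of Lemma~\ref{l:N-periodic} delivers condition (2) of the current lemma verbatim.

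For the converse, suppose (1) and (2) hold. Taking $k = 0$ in (2) yields that $f$ is order-preserving. Together with (1), Lemma~\ref{l:F(Z)} then certifies $f \in F(\mathbb{Z})$. Now that $f \in F(\m\Omega)$ for the integral chain $\m\Omega = \mathbb{Z}$, condition (2) is precisely clause (4) of Lemma~\ref{l:N-periodic}, so that lemma gives $f \in F_n(\mathbb{Z})$.

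There is really no obstacle here; the only point that deserves a sentence in the write-up is the observation that condition (2) alone (with $k=0$) forces order preservation, which is what allows us to invoke Lemma~\ref{l:F(Z)} from (1) and (2) without separately assuming monotonicity. The parenthetical ``equivalently, a bounded interval'' inside (1) needs no extra argument since it is part of the statement of Lemma~\ref{l:F(Z)}.
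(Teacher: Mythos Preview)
Your proof is correct and follows essentially the same approach as the paper's own proof: both directions invoke Lemma~\ref{l:F(Z)} (the paper cites Lemma~\ref{l: bounded preimage} directly for the forward direction, but this is the same content) together with Lemma~\ref{l:N-periodic}, and both note that condition~(2) with $k=0$ yields order preservation so that Lemma~\ref{l:F(Z)} can be applied in the converse.
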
  
\begin{proof}
    In the forward direction, observe that by Lemma~\ref{l: bounded preimage}, we know that for all $a\in f[\mathbb{Z}]$, there exist $b,c\in\mathbb{Z}$ such that $f^{-1}[a]=[b,c]$, so $f^{-1}[a]$ is bounded. Also, by Lemma~\ref{l:N-periodic}, for all  $x,y,k\in\mathbb{Z}$, if $x\leq y+kn$, then $f(x)\leq f(y)+kn$.
    
    For the converse direction, 
    if $x,y\in\mathbb{Z}$ and $x\leq y$, by (2) we have $f(x)\leq f(y)$, so $f$ is order preserving. Since (1) also holds, Lemma~\ref{l:F(Z)} implies  $f\in F(\mathbb{Z})$; by (2) and Lemma~\ref{l:N-periodic}, we get $f\in F_n(\mathbb{Z})$
 \end{proof}

 \begin{lemma}\label{l: simple}
For every $n$, the $\ell$-pregroup $\m F_n(\mathbb{Z})$ is simple.
\end{lemma}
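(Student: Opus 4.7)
The plan is to show that any non-trivial congruence $\theta$ on $\m F_n(\mathbb{Z})$ must be the universal congruence. Since $\ell$-pregroups are involutive residuated lattices, a congruence is determined by its $1$-class, which is a convex (normal) subalgebra. So it suffices to show: if $g \theta 1$ for some $g \neq 1$, then every element of $F_n(\mathbb{Z})$ is $\theta$-related to $1$. Since $g \neq 1$ forces either $g \vee 1 > 1$ or $g \wedge 1 < 1$, and since $^\ell$ is order-reversing and preserves $\theta$, we may assume without loss of generality that there exists $g > 1$ (strictly) with $g \theta 1$.

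The key element is the shift $t \in F_n(\mathbb{Z})$ defined by $t(x) = x+1$ (which is invertible with $t^\ell = t^r = t^{-1}$ given by $x \mapsto x-1$). The target is to first show $t \theta 1$. Since $g > 1$, there is some $x_0 \in \mathbb{Z}$ with $g(x_0) \geq x_0 + 1$. I consider the conjugates $g_k := t^k g t^{-k}$ for $k = 0, 1, \dots, n-1$, noting that $g_k \theta 1$ and $g_k(x) = g(x-k) + k$. By Lemma~\ref{l:N-periodic}(2), $g(x_0 + mn) = g(x_0) + mn$ for every $m \in \mathbb{Z}$, so a direct computation gives
\[
g_k(x_0 + k + mn) = g(x_0) + mn + k \;\geq\; (x_0 + k + mn) + 1.
\]
Setting $h := g_0 \vee g_1 \vee \cdots \vee g_{n-1}$, we still have $h \theta 1$. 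Because every $y \in \mathbb{Z}$ is of the form $x_0 + k + mn$ for a unique $k \in \{0,\dots,n-1\}$ and some $m$, we obtain $h(y) \geq g_k(y) \geq y+1 = t(y)$, so $h \geq t$ pointwise. Combining $h \geq t \geq 1$ with $h \theta 1$ gives $t = h \vee t\, \theta\, 1 \vee t = t$ and $h \theta 1$, hence $t \theta 1$.

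To finish, I use that $n$-periodicity forces $f(x) - x$ to be an $n$-periodic (hence bounded) function of $x$ for any $f \in F_n(\mathbb{Z})$: indeed $f(x+n) - (x+n) = f(x) - x$. Thus there exists $K \in \mathbb{Z}^+$ with $x - K \leq f(x) \leq x + K$ for all $x$, i.e.\ $t^{-K} \leq f \leq t^K$. Since $t \theta 1$ and $[1]_\theta$ is a subalgebra, $t^{\pm K} \theta 1$; by convexity of the $1$-class, $f \theta 1$. So $\theta$ is the universal congruence and $\m F_n(\mathbb{Z})$ is simple.

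The main obstacle is the covering argument showing $h \geq t$: the witness $g(x_0) > x_0$ only tells us $g$ moves one residue class modulo $n$ strictly upward, so a priori $g^m$ could still fix some residue classes (as illustrated by the $n=2$ example $g(2k)=2k+1,\ g(2k+1)=2k+1$, where $g^m = g$). The $n$ translate-conjugates $g_k$ are exactly what is needed to sweep out all residue classes, and $n$-periodicity is what makes one witness in each class propagate to the whole class via $g_k(y+n) = g_k(y) + n$.
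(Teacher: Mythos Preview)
Your proof is correct and essentially the same as the paper's: both produce from a nontrivial element of the congruence class a nontrivial translation (your conjugates $t^k g t^{-k}$ are exactly the paper's iterates $g^{\ell^{2k}}$, by Lemma~\ref{l: fellell=f+1}), and then sandwich every $f\in F_n(\mathbb{Z})$ between powers of that translation using convexity. One small slip: in the convexity step you want $t = h \wedge t \,\theta\, 1 \wedge t = 1$ (or simply invoke convexity of $[1]_\theta$ from $1\le t\le h$), not $\vee$.
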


\begin{proof}
The invertible elements of $\m F_n(\mathbb{Z})$ are the order-preserving bijections on $\mathbb{Z}$, so they are precisely the translations $t_k: x \mapsto x-k$, for $k \in \mathbb{Z}$. 
Using the results in \cite{GJKO} about the correspondence between congruences and convex normal submonoids of the negative cone,  to prove that  $\m F_n(\mathbb{Z})$ is simple it suffices to prove that: if $M$ is a non-trivial convex normal submonoid of the negative cone of $\m F_n(\mathbb{Z})$ then $M$ is the whole negative cone.

Given a strictly negative $f \in M$, the element
$g:=f \mt f^{\ell \ell} \mt \cdots \mt  f^{\ell^{2n-2}}$ is invertible, since
$(f \mt f^{\ell \ell} \mt \cdots \mt   f^{\ell^{2n-4}} \mt   f^{\ell^{2n-2}})^{\ell\ell}=f^{\ell\ell} \mt  f^{\ell^4} \mt  \cdots \mt   f^{\ell^{2n-2}} \mt   f^{\ell^{2n}}
=f^{\ell\ell} \mt  f^{\ell^4} \mt  \cdots \mt   f^{\ell^{2n-2}} \mt   f$,
and $g \leq f<1$ so $g$ is a strictly decreasing translation and $M$  contains $g$ and all of its powers; therefore $M$ contains translations $t_k$ for arbitrarily large $k$. We now argue that every negative element $h$ of $F_n(\mathbb{Z})$ is above some power of $g$ and thus belongs to $M$. Since $h$ is negative, for all $m \in \mathbb{Z}$ there exists $k_m \geq 0$ such that $h(m)=m-k_m$. Since $h$ is $n$-periodic, there are only $n$-many distinct such $k_m$'s; let $k'$ be the largest. We have $t_k \leq h$ for $k \geq k'$, so $h \in M$.
\end{proof}

The following two lemmas provide an even more transparent characterization of the elements of $\m F_n(\m{J}\overrightarrow{\times}\mathbb{Z})$ where  $\m{J}$ is a chain. 
Note that if $f\in F_n(\m\Omega)$ then not only $f^{(2n)}=f$, but moreover $f^{(2kn)}=f$ for all $k\in\mathbb{Z}$. 
For $x,y \in J \times \mathbb{Z}$, we write $x \equiv y$ iff they have the same first coordinate,
i.e., if there exists $k \in \mathbb{Z}$ such that $x=y+k$ ($x$ and $y$ belong to the same component of the lexicographic product). The following lemma shows that maps in $\m F_n(\m{J} \overrightarrow{\times}\mathbb{Z})$  preserve this equivalence relation.

\begin{lemma}\label{l:pre f^tilde}
    If $\m J$ is a chain, $\m{\Omega}=\m J \overrightarrow{\times}\mathbb{Z}$, $n \in \mathbb{Z}$, $f\in F_n(\m{\Omega})$,  $j,i\in J$ and $m\in\mathbb{Z}$, then the following hold.
    \begin{enumerate}
        \item For all $x,y \in \Omega$,  $x \equiv y \Rightarrow f(x)\equiv f(y)$.\\
        In other words, if $f(j,0)=(i,m)$, then for all $r\in\mathbb{Z}$, there exists $k\in\mathbb{Z}$, such that $f(j,r)=(i,k)$.
        \item If $x \notin f[\Omega]$, then there exists $b\in\Omega$ such that $f(b)<x<f(b+1)$.\\ 
        In other words, if $(i,m)\notin f[\Omega]$, then there exist $b\in\Omega$, and $r,k\in \mathbb{Z}$ such that $f(b)=(i,r)$, $f(b+1)=(i,k)$ and  $m\in (r,k)$.
    \end{enumerate}
\end{lemma}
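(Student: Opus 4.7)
The plan is to leverage the $n$-periodicity characterization of Lemma~\ref{l:N-periodic}(2) together with the lexicographic structure of $\m\Omega = \m J \overrightarrow{\times} \mathbb{Z}$.

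For (1), I would first observe that $x \equiv y$ means $y = x + k$ for some $k \in \mathbb{Z}$, so by symmetry I may assume $k \geq 0$. Pick any integer $N \geq 0$ with $k \leq Nn$. Since $f$ is order-preserving, the inequalities $x \leq y \leq x + Nn$ yield $f(x) \leq f(y) \leq f(x + Nn)$, and by Lemma~\ref{l:N-periodic}(2) the right-hand side equals $f(x) + Nn$. Writing $f(x) = (i, m)$ and $f(y) = (i', m')$, the lexicographic comparisons $(i,m) \leq (i', m') \leq (i, m + Nn)$ force $i' = i$ (any strictly larger first coordinate in $J$ would push $(i',m')$ beyond $(i, m + Nn)$), that is, $f(x) \equiv f(y)$.

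For (2), since $(i,m) \notin f[\Omega]$, Lemma~\ref{l: bounded preimage}(2) supplies $b, c \in \Omega$ with $b \prec c$ and $(i,m) \in (f(b), f(c))$. In the lexicographic product $\m J \overrightarrow{\times} \mathbb{Z}$, the only covers have the form $(j_0, r_0) \prec (j_0, r_0 + 1)$, since a strictly larger first coordinate would admit $(j_0, r_0 + 1)$ strictly in between. Therefore $c = b + 1$, so part (1) applied to $b \equiv b + 1$ gives $f(b) \equiv f(b + 1)$. Writing $f(b) = (i', r)$ and $f(b + 1) = (i', k)$, the strict inequalities $(i', r) < (i, m) < (i', k)$ force $i' = i$ and $r < m < k$, providing the desired $b, r, k$.

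The argument is largely routine once one notices the key coupling: $n$-periodicity sandwiches $f(y)$ within finitely many $\mathbb{Z}$-shifts of $f(x)$, while the covers of $\m J \overrightarrow{\times} \mathbb{Z}$ stay inside a single $\equiv$-component. Both ingredients are already in hand from the earlier lemmas, so the only work is to spell out the lexicographic comparisons.
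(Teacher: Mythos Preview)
Your proof is correct and follows essentially the same route as the paper's. For (1) you sandwich $f(y)$ between $f(x)$ and $f(x)+Nn$ via Lemma~\ref{l:N-periodic}(2), while the paper obtains the two inequalities $i'\leq i$ and $i\leq i'$ separately using Lemma~\ref{l:2N} together with $f^{(2kn)}=f$; these are the same idea, and your packaging is slightly cleaner. For (2) both you and the paper invoke Lemma~\ref{l: bounded preimage}(2), observe that covers in $\m J\overrightarrow{\times}\mathbb{Z}$ stay within a single $\equiv$-class, and then apply part (1).
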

\begin{proof}
          (1) Suppose $f(j,0)=(i,m)$ and let $r \leq 0$; the proof for $r>0$ is dual. We have  $f(j,r)=(i',k)$, where $i'\in J$ and $k\in\mathbb{Z}$; we will show that $i'=i$. Since $(j,r)\leq (j,0)$, we get $(i',k)=f(j,r)\leq f(j,0)=(i,m)$ by the order-preservation of $f$, so $i'\leq i$. 
          Using the order-preservation of $f$, Lemma~\ref{l:2N} and the $n$-periodicity of $f$, we get $(i,m)= f(j,0)\leq f(j,r-rn)=f^{(2rn)}((j,r-rn)+rn)-rn=f^{(2rn)}(j,r)-rn=f(j,r)-rn=(i',k)-rn=(i',k-rn)$, which implies $i\leq i'$; therefore $i=i'$.
          
          (2) If $(i,m)\notin f[\Omega]$ then, since $f\in F(\m\Omega)$, by Lemma~\ref{l: bounded preimage} there exist $b,c\in \Omega$ with $b\prec c$ such that $(i,m)\in (f(b),f(c))$. Then, by (1), there exist $j\in J$ and $r,k\in\mathbb{Z}$ such that $f(b)=(j,r)$ and $f(c)=f(b+1)=(j,k)$; so $(i,m)\in ((j,r),(j,k))$. Therefore, $i=j$ and $m\in (r,k)$.
\end{proof}

The following theorem shows that maps in $\m{F}_n(\m{J} \overrightarrow{\times}\mathbb{Z})$ consist of a global component map on $\m J$ and many local component maps, one for each $j \in J$. The ability to work on the global level and on the local levels separately will be crucial for the results in the next sections.

\begin{theorem}\label{t:F(JxZ)}
    Given a chain $\mathbf{J}$, then 
    $f\in F_n(\m{J} \overrightarrow{\times}\mathbb{Z})$ if and only if there exists an order-preserving bijection $\widetilde{f}:J\rightarrow J$ and maps $\overline{f}_j\in F_n(\mathbb{Z})$, for all $j\in J$, such that $f(j,r)=(\widetilde{f}(j),\overline{f}_{j}(r))$
  for all $(j,r)\in J \times\mathbb{Z}$. 
  \end{theorem}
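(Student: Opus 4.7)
The plan is to prove both directions directly, applying Lemma~\ref{l:pre f^tilde} for the forward direction and the characterizations in Lemma~\ref{l: bounded preimage} and Lemma~\ref{l: F_N(Z)} for the backward direction.

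For the forward direction, assume $f\in F_n(\m J\overrightarrow{\times}\mathbb{Z})$. Lemma~\ref{l:pre f^tilde}(1) says $f$ respects the ``same first coordinate'' equivalence on $J\times\mathbb{Z}$, so each fiber $\{j\}\times\mathbb{Z}$ lands inside a single fiber $\{i\}\times\mathbb{Z}$; I would set $\widetilde{f}(j):=i$ and let $\overline{f}_j\colon\mathbb{Z}\to\mathbb{Z}$ be the map recording the second coordinate of $f(j,r)$, so that $f(j,r)=(\widetilde{f}(j),\overline{f}_j(r))$ by construction. To see $\widetilde{f}$ is order-preserving and injective, I would suppose $j<j'$ and $\widetilde{f}(j)=\widetilde{f}(j')$; then $(j,r)<(j',0)$ for all $r\in\mathbb{Z}$ would force $\overline{f}_j(r)\leq\overline{f}_{j'}(0)$, contradicting the unboundedness of the $n$-periodic map $\overline{f}_j$ (the fiberwise identity $\overline{f}_j(r-kn)=\overline{f}_j(r)-kn$ is read off from Lemma~\ref{l:N-periodic}(2) applied to $f$). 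For surjectivity, if some $i\in J$ were missing from the image of $\widetilde{f}$, then $(i,m)\notin f[\Omega]$ for every $m$, and Lemma~\ref{l: bounded preimage}(2) would supply $b\prec c$ with $f(b)<(i,m)<f(c)$; integrality forces $c=b+1$, so $b,c$ share a fiber and so do $f(b),f(c)$, and the lexicographic order then pins $(i,m)$ into that common fiber, a contradiction. Finally, each $\overline{f}_j$ inherits order-preservation, satisfies $\overline{f}_j(r-n)=\overline{f}_j(r)-n$, and has finite preimages $\overline{f}_j^{-1}[m]$ since $f^{-1}[(\widetilde{f}(j),m)]=\{j\}\times\overline{f}_j^{-1}[m]$ is a bounded interval of $\Omega$ sitting inside a single fiber; thus $\overline{f}_j\in F_n(\mathbb{Z})$ by Lemma~\ref{l: F_N(Z)}.

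For the backward direction, given $\widetilde{f}$ and the $\overline{f}_j$'s, define $f(j,r):=(\widetilde{f}(j),\overline{f}_j(r))$. Order-preservation of $f$ is immediate from the lexicographic order together with order-preservation of $\widetilde{f}$ and each $\overline{f}_j$. To conclude $f\in F(\m\Omega)$ I would apply Lemma~\ref{l: bounded preimage}: bijectivity of $\widetilde{f}$ locates, for each $(i,m)\in\Omega$, a unique $j_0$ with $\widetilde{f}(j_0)=i$; then either $(i,m)\in f[\Omega]$ and $f^{-1}[(i,m)]=\{j_0\}\times\overline{f}_{j_0}^{-1}[m]$ is a bounded interval of $\Omega$ because $\overline{f}_{j_0}\in F_n(\mathbb{Z})$, or $(i,m)\notin f[\Omega]$ and Lemma~\ref{l: bounded preimage} applied to $\overline{f}_{j_0}\in F(\mathbb{Z})$ yields $r\in\mathbb{Z}$ with $\overline{f}_{j_0}(r)<m<\overline{f}_{j_0}(r+1)$, so $(j_0,r)\prec(j_0,r+1)$ bracket $(i,m)$ in $\Omega$. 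Finally, $n$-periodicity of $f$ follows from Lemma~\ref{l:N-periodic}(1) since $f(j,r-n)+n=(\widetilde{f}(j),\overline{f}_j(r-n)+n)=(\widetilde{f}(j),\overline{f}_j(r))=f(j,r)$, using the $n$-periodicity of each $\overline{f}_j$.

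The step I expect to be the main obstacle is the surjectivity of $\widetilde{f}$ in the forward direction: the argument needs the ``covering'' conclusion of Lemma~\ref{l: bounded preimage}(2) together with the observation that in an integral chain the cover relation $b\prec c$ forces $c=b+1$ and hence keeps $b,c$ in a common fiber, so that $f(b),f(c)$ share a fiber and the lexicographic order then pins down the missed point. The remaining checks are direct verifications against the characterizations of $F_n(\mathbb{Z})$ and $F(\m\Omega)$ already available.
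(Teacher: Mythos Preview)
Your proposal is correct and follows essentially the same approach as the paper's proof: both directions are handled via Lemma~\ref{l: bounded preimage} and Lemma~\ref{l:N-periodic}, with Lemma~\ref{l:pre f^tilde} supplying the fiber-preservation needed to define $\widetilde{f}$ and the $\overline{f}_j$'s. Your injectivity argument for $\widetilde{f}$ (unboundedness of $\overline{f}_j$ via the identity $\overline{f}_j(r-kn)=\overline{f}_j(r)-kn$) is a slightly cleaner repackaging of the paper's explicit choice of $k$ with $r_2<r_1+kn$, and your surjectivity argument reconstructs Lemma~\ref{l:pre f^tilde}(2) in place rather than citing it, but the content is the same.
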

  \begin{proof}
      We first show that if $\widetilde{f}:J\rightarrow J$ is an order-preserving bijection and  $\overline{f}_j\in F_n(\mathbb{Z})$, for all $j\in J$, then $f\in F(\m{J} \overrightarrow{\times}\mathbb{Z})$, where $f(j,r)=(\widetilde{f}(j),\overline{f}_{j}(r))$ for all $(j,r)\in J \times\mathbb{Z}$, by verifying the conditions of Lemma~\ref{l: bounded preimage}; we set $\mathbf{\Omega}:=\m{J} \overrightarrow{\times}\mathbb{Z}$  for brevity. If $(j_1,r_1)\leq(j_2,r_2)$ in $\m \Omega$, then $j_1<j_2$ or ($j_1=j_2$ and $r_1\leq r_2$). In the first case, since $\widetilde{f}$ is an order preserving bijection, we have $\widetilde{f}(j_1)<\widetilde{f}(j_2)$ and in the second case, $\widetilde{f}(j_1)=\widetilde{f}(j_2)$ and  $\overline{f}_{j_1}(r_1)\leq \overline{f}_{j_1}(r_2)=\overline{f}_{j_2}(r_2)$, since $\overline{f}_{j_1}$ is order preserving. Hence, in both cases $f(j_1,r_1)=(\widetilde{f}(j_1),\overline{f}_{j_1}(r_1))\leq (\widetilde{f}(j_2),\overline{f}_{j_2}(r_2))=f(j_2,r_2)$, i.e., $f$ is order-preserving. 
      If $(i,m)\in f[\Omega]$, then $(i,m)=f(j,\ell)=(\widetilde{f}(j), \overline{f}_j(\ell))$, for some $(j,\ell)\in J \times \mathbb{Z}$. 
      Actually, since $\widetilde{f}$ is a bijection, $j\in J$ is unique with the property $\widetilde{f}(j)=i$. Also, since $m\in \overline{f}_{j}[\mathbb{Z}]$ and  $\overline{f}_{j}\in F(\mathbb{Z})$, by Lemma~\ref{l: bounded preimage} there exist $r,k\in\mathbb{Z}$ such that $(\overline{f}_{j})^{-1}[m]=[r,k]$. Therefore, $f^{-1}[(i,m)]=[(j,r),(j,k)]$.
        If $(i,m) \not\in f[\Omega]$, since $\widetilde{f}$ is a bijection, there exist $j\in J$ such that $\widetilde{f}(j)=i$, hence $m\not \in (\overline{f}_{j})^{-1}[\mathbb{Z}]$. Since $\overline{f}_{j}\in F(\mathbb{Z})$, by Lemma~\ref{l: bounded preimage} there exist $r,k\in\mathbb{Z}$ such that $r\prec k$ and $m\in (\overline{f}_{j}(r),\overline{f}_{j}(k))$. Hence, $(j,r)\prec(j,k)$ and $(i,m)\in (f(j,r),f(j,k))$. 
      Having shown that $f\in F(\m{\Omega})$, to prove
      $f\in F_n(\m\Omega)$ we verify that $f$ is $n$-periodic. For all $(j,r)\in\Omega$, by applying Lemma~\ref{l:2N} to $f$ and to $\overline{f}_j$ and using $\overline{f}_{j}\in F_n(\mathbb{Z})$, we get  $f^{(2n)}(j,r)=f((j,r)-n)+n=f(j,r-n)+n=
      (\widetilde{f}(j),\overline{f}_{j}(r-n))+n=
      (\widetilde{f}(j),\overline{f}_{j}(r-n)+n)=(\widetilde{f}(j),\overline{f}_{j}^{(2n)}(r))=(\widetilde{f}(j),\overline{f}_{j}(r))=f(j,r)$. 

      Conversely, if $f\in F_n(\m{\Omega})$, we define the function $\widetilde{f}:J\rightarrow J$ by $\widetilde{f}(j)=i$ iff $f(j,0)=(i,m)$ for some $m\in \mathbb{Z}$; by Lemma~\ref{l:pre f^tilde}(1) $\widetilde{f}$ is well defined. To show that $\widetilde{f}$ is one-to-one, we assume that $\widetilde{f}(j_1)=\widetilde{f}(j_2)$ for some $j_1,j_2\in J$; since $\m{J}$ is a chain, we may further assume that $j_1\leq j_2$. Then, there exist $r_1,r_2\in\mathbb{Z}$ such that $f(j_1,0)=(\widetilde{f}(j_1),r_1)$ and $f(j_2,0)=(\widetilde{f}(j_1),r_2)$; since $j_1\leq j_2$, by the order-preservation of $f$ we have $(\widetilde{f}(j_1),r_1)=f(j_1,0)\leq f(j_2,0)=(\widetilde{f}(j_2),r_2)=(\widetilde{f}(j_1),r_2)$, so $r_1\leq r_2$. Let $k\in\mathbb{Z}$ be such that $r_2<r_1+kn$. 
      Using this fact, Lemma~\ref{l:2N} and the periodicity of $f$, we have 
      $f(j_2,0)=(\widetilde{f}(j_1),r_2)<
      (\widetilde{f}(j_1),r_1+kn)=
      (\widetilde{f}(j_1),r_1)+kn
      =f(j_1,0)+kn
       =f(j_1,kn-kn)+kn
      =f((j_1,kn)-kn)+kn
      =f^{(2kn)}(j_1,kn)
      =f(j_1,kn)$; hence $f(j_1,kn)\nleq f(j_2,0)$.
      By order preservation we get $(j_1,kn)\not\leq(j_2,0)$ and since $\Omega$ is a chain we obtain $(j_2,0)<(j_1,kn)$; thus $j_2\leq j_1$. In conclusion $j_1=j_2$.

      To show that $\widetilde{f}$ is onto, let $i\in J$ and consider $(i,0)\in  \Omega$. If $(i,0)\in f[\Omega]$, there exists $(j,r)\in\Omega$ such that $f(j,r)=(i,0)$; by Lemma \ref{l:pre f^tilde}(1) we have $f(j,0)=(i,k)$ for some $k\in\mathbb{Z}$, so $i\in \widetilde{f}[J]$. If $(i,0)\notin f[ \Omega]$, by Lemma~\ref{l:pre f^tilde}(2) there exist $j\in J$ and $r,m_1,m_2\in \mathbb{Z}$, such that $f(j,r)=(i,m_1)$, $f(j,r+1)=(i,m_2)$, and $0\in (m_1,m_2$); then, by Lemma~\ref{l:pre f^tilde}(1), $f(j,0)=(i,m_3)$ for some $m_3\in\mathbb{Z}$, so $\widetilde{f}(j)=i$, i.e., $i\in\widetilde{f}[J]$.

      Now, for $j \in J$, we define $\overline{f}_{j}:\mathbb{Z}\rightarrow\mathbb{Z}$ where $\overline{f}_{j}(r)=m$ iff $f(j,r)=(\widetilde{f}(j),m)$ for some $m\in \mathbb{Z}$; by Lemma~\ref{l:pre f^tilde}(1) $\overline{f}_i$ is well-defined.
      We will show that $\overline{f}_{j}\in F(\mathbb{Z})$ by verifying the condition in Lemma~\ref{l:F(Z)}.
      If $m\in \overline{f}_{j}[\mathbb{Z}]$, then there exist $r\in\mathbb{Z}$ such that $\overline{f}_{j}(r)=m$, hence, by the definition of $\overline{f}_{j}$, there exist $i\in J$ such that $f(j,r)=(i,m)$. Since $f\in F(\mathbf{\Omega})$, by Lemma~\ref{l: bounded preimage}(1) there exist $(j_1,r_1),(j_2,r_2)\in \Omega$ such that $f^{-1}[(i,m)]=[(j_1,r_1),(j_2,r_2)]$. So, by Lemma~\ref{l:pre f^tilde}(1) $\widetilde{f}(j_1)=i=\widetilde{f}(j_2)$ and since $\widetilde{f}$ is a bijection, $j_1=j_2:=j$. Then $f^{-1}[(i,m)]=[(j,r_1),(j,r_2)]$; thus $\overline{f}_j^{-1}[m]=[r_1,r_2]$. 
      Finally, we prove that $\overline{f}_{j}\in F_n(\mathbb{Z})$, by showing that  $\overline{f}_{j}$ is $n$-periodic. For all $j\in J$ and $m\in\mathbb{Z}$, by the $n$-periodicity of $f$ and Lemma \ref{l:2N}, we have 
      $(\widetilde{f}(j),\overline{f}_{j}(m))=f(j,m)=f^{(2n)}(j,m)=f((j,m)-n)+n=f(j,m-n)+n
      =(\widetilde{f}(j),\overline{f}_{j}(m-n))+n
      =(\widetilde{f}(j),\overline{f}_{j}(m-n)+n)$; hence $\overline{f}_{j}(m)=\overline{f}_{j}(m-n)+n=(\overline{f}_{j})^{(2n)}(m)$, by Lemma~\ref{l:2N}, we have that $\overline{f}_{j}$ is $n$-periodic. 
  \end{proof}
  \begin{example}
  In Figure~\ref{f: figure 1} we can see a function $f$ of $\m F_2(\mathbb{Q}\overrightarrow{\times}\mathbb{Z})$ and its $0$-th component $\overline{f}_0$ which is an element of $\m F_2(\mathbb{Z})$.
  \end{example}

\begin{figure}[H]\def\eq{=}
\begin{center}
{\scriptsize
\begin{tikzpicture}
[scale=0.4]
\node[fill,draw,circle,scale=0.3,left](9) at (0,9){};
\node[right](9.1,0) at (-.9,9.25){$3$};
\node[fill,draw,circle,scale=0.3,right](9r) at (2,9){};
\node[right](9.1) at (2.1,9.25){$3$};

\node[fill,draw,circle,scale=0.3,left](8) at (0,8){};
\node[right](8.1,0) at (-.9,8.25){$2$};
\node[fill,draw,circle,scale=0.3,right](8r) at (2,8){};
\node[right](8.1) at (2.1,8.25){$2$};

\node[fill,draw,circle,scale=0.3,left](7) at (0,7){};
\node[right](7.1,0) at (-.9,7.25){$1$};
\node[fill,draw,circle,scale=0.3,right](7r) at (2,7){};
\node[right](7.1) at (2.1,7.25){$1$};

\node[fill,draw,circle,scale=0.3,left](6) at (0,6){};
\node[right](6.1,0) at (-.9,6.25){$0$};
\node[fill,draw,circle,scale=0.3,right](6r) at (2,6){};
\node[right](6.1) at (2.1,6.25){$0$};

\node[fill,draw,circle,scale=0.3,left](4) at (0,4){};
\node[right](9.1,0) at (-.9,4.25){$7$};
\node[fill,draw,circle,scale=0.3,right](4r) at (2,4){};
\node[right](4.1) at (2.1,4.25){$7$};

\node[fill,draw,circle,scale=0.3,left](3) at (0,3){};
\node[right](3.1,0) at (-.9,3.25){$6$};
\node[fill,draw,circle,scale=0.3,right](3r) at (2,3){};
\node[right](3.1) at (2.1,3.25){$6$};

\node[fill,draw,circle,scale=0.3,left](2) at (0,2){};
\node[right](2.1,0) at (-.9,2.25){$5$};
\node[fill,draw,circle,scale=0.3,right](2r) at (2,2){};
\node[right](2.1) at (2.1,2.25){$5$};

\node[fill,draw,circle,scale=0.3,left](1) at (0,1){};
\node[right](1.1,0) at (-.9,1.25){$4$};
\node[fill,draw,circle,scale=0.3,right](1r) at (2,1){};
\node[right](1.1) at (2.1,1.25){$4$};


\node[fill,draw,circle,scale=0.3,left](-1) at (0,-1){};
\node[right](-1.1,0) at (-.9,-1.25){$3$};
\node[fill,draw,circle,scale=0.3,right](-1r) at (2,-1){};
\node[right](9.1) at (2.1,-1.25){$3$};

\node[fill,draw,circle,scale=0.3,left](-2) at (0,-2){};
\node[right](-2.1,0) at (-.9,-2.25){$2$};
\node[fill,draw,circle,scale=0.3,right](-2r) at (2,-2){};
\node[right](-2.1) at (2.1,-2.25){$2$};

\node[fill,draw,circle,scale=0.3,left](-3) at (0,-3){};
\node[right](-3.1,0) at (-.9,-3.25){$1$};
\node[fill,draw,circle,scale=0.3,right](-3r) at (2,-3){};
\node[right](-3.1) at (2.1,-3.25){$1$};

\node[fill,draw,circle,scale=0.3,left](-4) at (0,-4){};
\node[right](-4.1,0) at (-.9,-4.25){$0$};
\node[fill,draw,circle,scale=0.3,right](-4r) at (2,-4){};
\node[right](-4.1) at (2.1,-4.25){$0$};
\node at (1)[below= 1pt]{$\vdots$};
\node at (1r)[below= 1pt]{$\vdots$};
\node at (6)[below=1pt]{$\vdots$};
\node at (6r)[below=1pt]{$\vdots$};
\node at (9)[above=3pt]{$\vdots$};
\node at (9r)[above=3pt]{$\vdots$};
\node at (-4)[below=-1pt]{$\vdots$};
\node at (-4r)[below=-1pt]{$\vdots$};

\draw[-](1)--(7r);
\draw[-](2)--(7r);
\draw[-](3)--(9r);
\draw[-](4)--(9r);

\draw[-](-1)--(3r);
\draw[-](-2)--(3r);
\draw[-](-3)--(1r);
\draw[-](-4)--(1r);

\node at (1,-5){\normalsize$f$};
\end{tikzpicture}
\qquad \qquad \qquad
\begin{tikzpicture}
[scale=0.4]
\node[fill,draw,circle,scale=0.3,left](9) at (0,9){};
\node[right](9.1,0) at (-.9,9.25){$7$};
\node[fill,draw,circle,scale=0.3,right](9r) at (2,9){};
\node[right](9.1) at (2.1,9.25){$7$};

\node[fill,draw,circle,scale=0.3,left](8) at (0,8){};
\node[right](8.1,0) at (-.9,8.25){$6$};
\node[fill,draw,circle,scale=0.3,right](8r) at (2,8){};
\node[right](8.1) at (2.1,8.25){$6$};

\node[fill,draw,circle,scale=0.3,left](7) at (0,7){};
\node[right](7.1,0) at (-.9,7.25){$5$};
\node[fill,draw,circle,scale=0.3,right](7r) at (2,7){};
\node[right](7.1) at (2.1,7.25){$5$};

\node[fill,draw,circle,scale=0.3,left](6) at (0,6){};
\node[right](6.1,0) at (-.9,6.25){$4$};
\node[fill,draw,circle,scale=0.3,right](6r) at (2,6){};
\node[right](6.1) at (2.1,6.25){$4$};

\node[fill,draw,circle,scale=0.3,left](5) at (0,5){};
\node[right](9.1,0) at (-.9,5.25){$3$};
\node[fill,draw,circle,scale=0.3,right](5r) at (2,5){};
\node[right](5.1) at (2.1,5.25){$3$};

\node[fill,draw,circle,scale=0.3,left](4) at (0,4){};
\node[right](8.1,0) at (-.9,4.25){$2$};
\node[fill,draw,circle,scale=0.3,right](4r) at (2,4){};
\node[right](4.1) at (2.1,4.25){$2$};

\node[fill,draw,circle,scale=0.3,left](3) at (0,3){};
\node[right](3.1,0) at (-.9,3.25){$1$};
\node[fill,draw,circle,scale=0.3,right](3r) at (2,3){};
\node[right](3.1) at (2.1,3.25){$1$};

\node[fill,draw,circle,scale=0.3,left](2) at (0,2){};
\node[right](2.1,0) at (-.9,2.25){$0$};
\node[fill,draw,circle,scale=0.3,right](2r) at (2,2){};
\node[right](2.1) at (2.1,2.25){$0$};
\node at (9)[above=3pt]{$\vdots$};
\node at (9r)[above=3pt]{$\vdots$};
\node at (2)[below=-1pt]{$\vdots$};
\node at (2r)[below=-1pt]{$\vdots$};
\draw[-](2)--(6r);
\draw[-](3)--(6r);
\draw[-](4)--(8r);
\draw[-](5)--(8r);

\node at (1,1.25){\normalsize $\overline{f}_0$};
\end{tikzpicture}
}
\caption{An element of $\m F_n(\mathbb{Q}\overrightarrow{\times} \mathbb{Z})$ and one of its components in $\m F_n(\mathbb{Z})$}
\label{f: figure 1}
\end{center}

\end{figure}
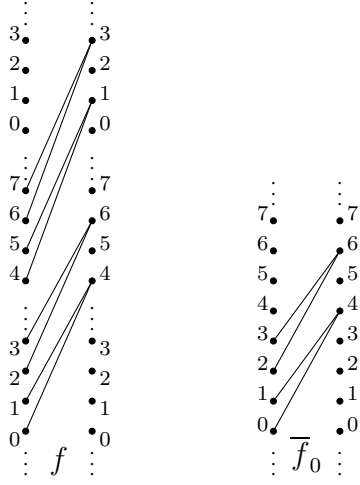

  \subsection{Wreath products}

  In this section we show that $\m F_n(\m{J} \overrightarrow{\times}\mathbb{Z})$ is actually a wreath product and that every $n$-periodic $\ell$-pregroup can be embedded in the wreath product of an $\ell$-group and an  $n$-periodic simple $\ell$-pregroup (our second representation theorem for $n$-periodic $\ell$-pregroups). 

  \medskip

  First recall that if $\m H=(H, \overline{\circ}, 1_H)$ and $\m N=(N, \odot, 1_N)$ are monoids and $\m H$ acts on the monoid $\m N$ on the right  via $\otimes: \m H \curvearrowright \m N$, 
  (i.e., $n \otimes 1_H=n$ $(n \otimes h_1)\otimes h_2=n \otimes (h_1 \overline{\circ} h_2)$, $(n_1 \odot n_2)\otimes h=(n_1 \otimes h) \odot (n_2 \otimes h)$ and $1_N \otimes h=1_N$),
  then the \emph{semidirect product} $\m H \ltimes_{\otimes} \m N$ is defined by $$(h_1, n_1)(h_2, n_2)=(h_1 \overline{\circ} h_2, (n_1 \otimes h_2) \odot n_2)$$ and it forms a monoid with unit $(1_H, 1_N)$. 
  
  If we take $\m N=\m F^J$, where  $J$ is a set and $\m F=(F, \ocirc, 1_F)$ is a monoid, i.e., the operation of $\m N$ is $(n_1 \odot n_2)(j):=n_1(j) \ocirc n_2(j)$, and if $\m H$ acts on the set $J$ on the left by $*: \m H \curvearrowright J$, then $\m H$ acts on $\m F^J$ on the right
via $\otimes: \m H \curvearrowright \m F^J$, where
\begin{center}
      $n \otimes h:=n \circ \lambda_h$ and $\lambda_h(j):=h *j$,  
\end{center}
hence 
$(n\otimes h)(j)=(n \circ \lambda_h)(j)=n(\lambda_h(j))=
n(h*j)$.
Indeed, 
$\otimes$ is a right action because 
$((n_1 \odot n_2)\otimes h)(j)=
((n_1 \odot n_2)\circ \lambda_h)(j)=
(n_1 \odot n_2)(\lambda_h(j))
=n_1(\lambda_h(j)) \ocirc  n_2(\lambda_h(j))
=n_1 \circ \lambda_h(j) \ocirc  n_2 \circ \lambda_h(j)
=(n_1 \otimes h)(j) \ocirc  (n_2 \otimes h)(j)
=((n_1 \otimes h) \odot (n_1 \otimes h))(j)$ and $(1_N \otimes h)(j)=(1_N \circ \lambda_h)(j)=1_N(\lambda_h(j))=1_F=1_N(j)$.
 Also, for every $h_1, h_2 \in H$ and $j \in J$, we have $(\lambda_{h_1} \circ \lambda_{h_2}) (j)=h_1 *(h_2*j)=(h_1 \overline{\circ} h_2)*j=\lambda_{h_1 \overline{\circ} h_2}(j)$, so 
$(n \otimes h_1)\otimes h_2=(n \circ \lambda_{h_1} \circ \lambda_{h_2})=n \circ \lambda_{h_1\overline{\circ} h_2}= n \otimes (h_1 \overline{\circ} h_2)$. The semidirect product  $\m H \ltimes_{\otimes} \m F^J$ is known as the \emph{wreath product}  $\m H \wr_{J, *} \m F$, or simply  $\m H \wr \m F$ when  $\m H$ is a submonoid of $\m {End}(J)$ and $h*j:=h(j)$.
One such case  is when we take $\m H:=\m {Aut}(\m J)$ to be the order-preserving automorphisms of a chain $\m J$, 
resulting in the wreath product $\m {Aut}(\m J) \wr \m F= \m {Aut}(\m J) \ltimes \m F^J$. 


\medskip

Now, we expand the monoid structure of the wreath product  $\m H \wr_{J, *} \m F$ relative to $J$ and $*$, when $\m H$ is an $\ell$-group,  $\m J$ is a chain, $\m F$ is an $\ell$-pregroup, and $*: \m H \curvearrowright \m J$ is a left action of $\m H$  on the chain $\m J$, by defining an order and inversion operations. We say that a monoid $\m H$ acts on a chain $\m J$ via  $*: \m H \curvearrowright \m J$, if $\m H$ acts on the set $J$ and $*$ is monotone on both coordinates.

We define 
\begin{center}
  $(h_1,n_1) \leq (h_2, n_2)$ by: $h_1 \leq h_2$ and $h_1*j=h_2*j \Rightarrow n_1(j) \leq n_2(j)$, for all $j$.   
\end{center}
Also, $(h_1,n_1) \jn (h_2, n_2)=(h_1\jn h_2, n_\jn)$ and $(h_1,n_1) \mt (h_2, n_2)=(h_1 \mt h_2, n_\mt)$, where for every $j$,
$$  n_\jn(j)= 
\begin{cases}
 n_2(j)  \\
 n_1(j) \\
 n_1(j) \jn n_2(j)
 \end{cases}
  n_\mt(j)= 
\begin{cases}
 n_1(j) & \text{ if } h_1*j<h_2*j \\
 n_2(j) &  \text{ if } h_2*j<h_1*j\\
 n_1(j) \mt n_2(j)&  \text{ if } h_1*j=h_2*j
\end{cases}$$
Also, we define: 
\begin{center}
    $(h,n)^\ell:=(h^{-1}, n^\ell \otimes h^{-1})$ and $(h,n)^r:=(h^{-1}, n^r \otimes h^{-1})$
,
\end{center} where $n \mapsto n^\ell$ and $n \mapsto n^r$ are the operations of the direct product $\m F^J$. We denote by $\m H \wr_{\m {J}, *} \m F$ the resulting structure, which we call the \emph{($\ell$-pregroup) wreath product}, and we prove that it is an $\ell$-pregroup.

\begin{theorem}
If $*: \m H \curvearrowright \m J$ is a left action of the $\ell$-group $\m H$  on the chain $\m J$ and $\m F$ is an $\ell$-pregroup,  
then  $\m H \wr_{\m J, *} \m F$ is an $\ell$-pregroup.
Furthermore, 
\begin{enumerate}
    \item for every $k$, $\m H \wr_{\m J, *} \m F$ is $k$-periodic iff $\m F$ is $k$-periodic.
    \item $\m H \wr_{\m J, *} \m F$ is distributive iff 
    $\m F$ is distributive.
\end{enumerate}

\end{theorem}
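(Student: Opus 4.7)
The plan is to verify the $\ell$-pregroup axioms in a systematic order (monoid, lattice, order-compatibility of multiplication, pregroup identities), then derive claims (1) and (2) from the explicit formulas, using a uniform embedding $\m F \hookrightarrow \m H \wr_{\m J,*} \m F$ for the ``only if'' directions.

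The monoid structure and associativity are inherited from the semidirect product; I would first check that $\otimes$ is a right monoid action of $\m H$ on the componentwise monoid $\m F^J$, which follows directly from $*$ being a left action of $\m H$ on $J$. For the lattice structure, antisymmetry of the order crucially uses that $\m H$ is an $\ell$-group, so each $\lambda_h: j \mapsto h*j$ is an order-preserving bijection with inverse $\lambda_{h^{-1}}$; consequently $h_1 \leq h_2 \leq h_1$ forces $h_1 = h_2$ and then the hypothesis $h_1 * j = h_2 * j$ holds automatically for every $j$. I would verify that the case-wise formulas for $\jn$ and $\mt$ produce the lattice operations by a three-case split on whether $h_1*j < h_2*j$, $h_2*j < h_1*j$, or $h_1*j = h_2*j$, using that $\m J$ is a chain. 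Order-preservation of $\cdot$ uses the corresponding property in $\m H$ together with the bijectivity of $\lambda_h$ (which gives $h*a = h*b \Leftrightarrow a = b$). The pregroup identities reduce to direct computation; for instance
\[
(h,n)^\ell (h,n) = (h^{-1}h,\, (n^\ell \otimes h^{-1}h) \odot n) = (1_H,\, n^\ell \odot n) \leq (1_H, 1_N),
\]
with the three remaining inequalities dual.

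For periodicity, the key computational facts are that $\otimes$ is an action, $(n \otimes h_1) \otimes h_2 = n \otimes (h_1 h_2)$, and that $\ell$ commutes with $\otimes$, namely $(n \otimes h)^\ell = n^\ell \otimes h$ (both verified pointwise). Combining these yields $(h,n)^{\ell\ell} = (h, n^{\ell\ell})$, with $n^{\ell\ell}$ computed pointwise in $\m F^J$, and by induction $(h,n)^{(2k)} = (h, n^{(2k)})$. Hence the wreath product is $k$-periodic iff $n^{(2k)}(j) = n(j)$ for every $n$ and every $j$, which is equivalent (by specializing $n$ to a constant function) to $\m F$ being $k$-periodic.

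Both directions of (2) rely on the same construction: fix $j_0 \in J$ and define $\phi(f) := (1_H, n_f)$ where $n_f(j_0) := f$ and $n_f(j) := 1_F$ for $j \neq j_0$; a routine check shows $\phi$ is an $\ell$-pregroup embedding of $\m F$ into $\m H \wr_{\m J,*} \m F$ (and $\phi$ can alternatively supply the forward direction of (1)). Thus distributivity of the wreath product transfers to $\m F$. For the converse, the first coordinate of any distributivity equation is handled inside the distributive $\ell$-group $\m H$, while the second coordinate is verified pointwise by a case analysis on the relative order of $h_1*j, h_2*j, h_3*j$, each case reducing either to a trivial identity or to an instance of distributivity in $\m F$. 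I expect this last case analysis to be the main obstacle: each individual case is mechanical once the ordering of the three values is fixed, but the number of weak orderings of three elements makes for tedious bookkeeping.
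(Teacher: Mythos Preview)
Your proposal is correct and follows essentially the same route as the paper: verify the monoid, partial-order, lattice, order-compatibility, and pregroup axioms by direct computation, then use the identity $(h,n)^{(2k)}=(h,n^{(2k)})$ for (1) and a pointwise case analysis on the relative order of $h_1*j,\,h_2*j,\,h_3*j$ for (2). The one cosmetic difference is that you package the ``only if'' directions via an explicit embedding $\phi:\m F\hookrightarrow \m H\wr_{\m J,*}\m F$, $f\mapsto(1_H,n_f)$, whereas the paper does the same thing ad hoc by specializing to elements $(h,n_1),(h,n_2),(h,n_3)$ with a common first coordinate; your formulation is arguably cleaner but not a different argument.
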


\begin{proof}
From the above calculations we know that  $\m H \wr_{J, *} \m F$  supports a monoid structure.     

We show that $\leq$ is a partial order; it is clearly reflexive. For transitivity, we assume  $(h_1,n_1) \leq (h_2, n_2) \leq (h_3, n_3)$, so $h_1 \leq h_2 \leq h_3$; in particular $h_1*j\leq h_2*j\leq h_3*j$ for all $j$, by the monotonicity of $*$ on its left coordinate. So, if $h_1*j=h_3*j$, then $h_1*j= h_2*j= h_3*j$, so $ n_1(j) \leq n_2(j) \leq n_3(j)$. For antisymmetry,  we assume  $(h_1,n_1) \leq (h_2, n_2) \leq (h_1, n_1)$, so $h_1 \leq h_2 \leq h_1$; in particular $h_1*j= h_2*j$, for all $j$. Therefore, by assumption we get  $ n_1(j) \leq n_2(j) \leq n_1(j)$, for all $j$.  

We will use the fact that $*$ distributes over joins on its left coordinate, which we establish now. 
First note that the map $j \mapsto h*j$ is an order-preserving bijection: $h*i=h*j\Rightarrow h^{-1}*(h*i)=h^{-1}*(h*j) \Rightarrow i=j$ and  the action is order-preserving on the right coordinate; so, $j \mapsto h*j$ distributes over joins and meets.
Now, to prove $(h_1 \jn h_2)*j=h_1*j \jn h_2*j$, by order-preservation we only need to check $(h_1 \jn h_2)*j\leq h_1*j \jn h_2*j$, which is equivalent to $j\leq (h_1 \jn h_2)^{-1}*(h_1*j \jn h_2*j)$, i.e., to $j\leq (h_1 \jn h_2)^{-1}*(h_1*j) \jn (h_1 \jn h_2)^{-1}*(h_2*j)$, which is equivalent to $j\leq (1 \mt h_2^{-1}\overline{\circ} h_1)*j \jn(1 \mt h_1^{-1}\overline{\circ} h_2)*j$, which in turn holds by order preservation on the left coordinate.

We show that $\leq$ is a lattice-order and that  join and meet are given by the formulas. First note that 
 $(h_1,n_1) \leq (h_1 \jn h_2, n_\jn)$, because  $h_1 \leq h_1 \jn h_2$ and, if $h_1 *j  = (h_1 \jn h_2)*j$, 
 then  $h_2 *j \leq  (h_1 \jn h_2)*j= h_1 *j$;
 by definition, we get that then $n_\jn(j)\in \{n_1(j), n_1(j)\jn n_2(j)\}$, thus $n_1(j) \leq n_\jn(j)$. Also, if
 $(h_1,n_1) \leq (h, n)$ and $(h_2,n_2) \leq (h, n)$, then $h_1, h_2 \leq h$, so  $h_1 \jn h_2 \leq h$. If $ (h_1 \jn h_2)*j=h*j$, for some $j$, then $(h_1*j) \jn (h_2 *j)=h*j$, so because $\m J$ is a chain we get $h*j= h_1*j$ or $h*j= h_2*j$. In either case, by the definition of $n_\jn$,  we get $n_\jn(j)= n_1(j)\jn n_2(j)$. Therefore, 
 $(h_1 \jn h_2,n_\jn ) \leq (h, n)$. Likewise, we show that the meet is given by the formula. 

 We will show that multiplication is order-preservating on the right. We assume that $(h_1, n_1) \leq (h_2, n_2)$, i.e.,  $h_1 \leq h_2$ and $h_1*j=h_2*j \Rightarrow n_1(j) \leq n_2(j)$, for all $j$. We will show that
$(h_1, n_1)(h,n) \leq (h_2, n_2)(h,n)$, i.e., that 
$(h_1 \overline{\circ} h, (n_1 \otimes h) \odot n) \leq (h_2 \overline{\circ} h, (n_2 \otimes h) \odot n)$.
Since $h_1 \leq h_2$, by monotonicity of $\overline{\circ}$, we get $h_1\overline{\circ} h \leq h_2\overline{\circ} h$. For each $j$, if $(h_1\overline{\circ} h)*j = (h_2\overline{\circ} h)*j$, then  $h_1*(h*j) = h_2 *(h*j)$, so 
$n_1(h*j) \leq n_2(h*j)$, by hypothesis. By the order-preservation of $\ocirc$ in $\m F$, we get 
$ n_1(h*j) \ocirc n(j) \leq n_2(h*j) \ocirc n(j)$. Since $n_i(h*j)= (n_i \circ \lambda_h)(j)=(n_i \otimes h)(j)$, we get
$ (n_1 \otimes h)(j) \ocirc n(j) \leq (n_2 \otimes h)(j) \ocirc n(j)$, i.e., 
$ ((n_1 \otimes h) \odot n)(j) \leq ((n_2 \otimes h) \odot n)(j)$. 

 For the left side, we will show that
$(h,n)(h_1, n_1) \leq (h,n)(h_2, n_2)$, i.e., that 
$(h \overline{\circ} h_1, (n \otimes h_1) \odot n_1) \leq (h \overline{\circ} h_2, (n \otimes h_2) \odot n_2)$. We get $h \overline{\circ} h_1 \leq h \overline{\circ} h_2$, by the monotonicity of $\overline{\circ}$. For each $j$, if $(h\overline{\circ} h_1)*j = (h\overline{\circ} h_2)*j$, then
$h*(h_1*j) = h *(h_2*j)$, so 
$h^{-1} *(h*(h_1*j))= h^{-1} *(h *(h_2*j))$. 
Since $\m H$ is a group we get $h^{-1} *(h*(i))=(h^{-1}\overline{\circ} h)*i=1_H*i=i$, therefore $h_1*j = h_2*j$, hence $n_1(j) \leq n_2(j)$, by hypothesis; also $n(h_1*j) = n(h_2*j)$. By the monotonicity of $\ocirc$, we get $n(h_2*j) \ocirc n_1(j) \leq n(h_2*j) \ocirc n_2(j)$. Since $n(h_i*j)= (n \circ \lambda_{h_i})(j)=(n \otimes h_i)(j)$, we get $(n \otimes h_1)(j) \ocirc n_1(j) \leq (n \otimes h_2)(j) \ocirc n_2(j)$, hence $((n \otimes h_1) \odot n_1)(j) \leq ((n \otimes h_2) \odot n_2)(j)$.

Now, show that for all $x \in H \times F^J$, we have $x^\ell x \leq 1$ and $1 \leq x^r x$. 
Using the fact that $f^\ell f \leq 1_F$ holds for all $f \in F$, and thus also $n^\ell \odot n \leq 1_N$ for $n \in F^J$, we get 
$(h^{-1}, n^\ell \otimes h^{-1})(h,n)
=(h^{-1} \overline{\circ} h, ((n^\ell \otimes h^{-1}) \otimes h) \odot n )
=(1_H, (n^\ell \otimes (h^{-1} \overline{\circ} h)) \odot n)
=(1_H, (n^\ell \otimes 1_H) \odot n)
=(1_H, n^\ell \odot n)
\leq (1_H, 1_N)$. Also, $(h^{-1}, n^r \otimes h^{-1})(h,n)
=(h^{-1} \overline{\circ} h, ((n^r \otimes h^{-1}) \otimes h) \odot n )
=(1_H, (n^r \otimes (h^{-1} \overline{\circ} h)) \odot n)
=(1_H, (n^r \otimes 1_H) \odot n)
=(1_H, n^r \odot n)
\geq (1_H, 1_N)$. Note that we used the fact that $\m H$ acts via $\otimes$ on the set $F^J$.

For showing $x x^r \leq 1 \leq x x^\ell$ we will use that $\m H$ acts also on the monoid $\m F^J$ and the order-preservation of $\otimes$ on the first coordinate, which we establish first. If $n_1\leq n_2$, i.e., $n_1(j)\leq n_2(j)$ for all $j$, then for all $h$ we get $n_1 \otimes h \leq n_2 \otimes h$. Indeed, for all $j$, we have $n_1(h*j)\leq n_1(h*j)$, so $(n_1 \otimes h)(j) \leq (n_2 \otimes h)(j)$. Thus, $\otimes: \m H \curvearrowright \m F^J$  is order-preserving in the left coordinate.
Now, for $x x^r \leq 1 \leq x x^\ell$, we have $(h,n)(h^{-1}, n^r \otimes h^{-1})
=(h \overline{\circ} h^{-1}, 
(n \otimes h^{-1}) \odot (n^r  \otimes h^{-1}) )
=(1_H, (n  \odot n^r) \otimes h^{-1})
\leq (1_H, 1_N \otimes h^{-1})
=(1_H, 1_N)$ and
$(h,n)(h^{-1}, n^\ell \otimes h^{-1})
=(h \overline{\circ} h^{-1}, 
(n \otimes h^{-1}) \odot (n^\ell  \otimes h^{-1}) )
=(1_H, (n  \odot n^\ell) \otimes h^{-1})
\geq (1_H, 1_N \otimes h^{-1})
=(1_H, 1_N)$


(1) We now prove that $(n\otimes h)^\ell=n^\ell \otimes h$, i.e., $(n\otimes h)^\ell(j)=(n^\ell \otimes h)(j)$, for all $j$. By the coordinate-wise definition of $^{\ell}$ on elements of $F^J$, this is equivalent to $((n\otimes h)(j))^\ell=(n^\ell \otimes h)(j)$, to
$(n(h *j))^\ell=n^\ell(h *j)$ and to
$(n(h *j))^\ell=(n(h *j))^\ell$, which holds. Likewise, we have $(n\otimes h)^r=n^r \otimes h$. We use these properties to establish the characterization of the periodic case.

Note that $(h,n)^{\ell \ell}=(h^{-1}, n^\ell \otimes h^{-1})^\ell= ((h^{-1})^ {-1}, (n^{\ell} \otimes h^{-1})^\ell \otimes (h^{-1})^{-1}))=(h,(n^{\ell \ell} \otimes h^ {-1}) \otimes h)=
(h,n^{\ell \ell} \otimes (h^{-1} \overline{\circ} h))= (h,n^{\ell \ell} \otimes 1_H)=(h,n^{\ell \ell})$. More generally, we get  $(h,n)^{(2k)}=(h,n^{(2k)})$, for every $k$. Therefore, for each positive integer $k$, we have that  $(h,n)^{(2k)}=(h,n)$ for all $(h,n)\in\m H \wr_{\m J, *} \m F$, iff  $(h,n^{(2k)})=(h,n)$ for all $(h,n)\in\m H \wr_{\m J, *} \m F$, iff  $n^{(2k)}=n$ for all $n \in F^J$.

(2) We first show that if $\m F$ is distributive, then $\m H \wr_{\m J, *} \m F$ is distributive, as well. We will show that for all $(h_1,n_1),(h_2,n_2),(h_3,n_3)\in\m H \wr_{\m J, *} \m F$ we have:
\[(h_1,n_1)\wedge((h_2,n_2)\vee(h_3,n_3))=((h_1,n_1)\wedge(h_2,n_2))\vee((h_1,n_1)\wedge(h_3,n_3))\]

For simplicity, we  define $n_{i\wedge j}$ by $(h_i,n_i)\wedge(h_l,n_l)=(h_i\wedge h_l,n_{i\wedge j})$, instead of the generic term $n_{\mt}$, and similarly for the join.
By the distributivity of $\m J$ we get  $h_1*j\wedge(h_2*j\vee h_3*j)=(h_1*j\wedge h_2*j)\vee(h_1*j\wedge h_3*j)$ and in particular, there exists $i\in\{1,2,3\}$ such that $h_1*j\wedge(h_2*j\vee h_3*j)=h_i*j$, since $\m J$ is a chain.
If $h_1*j,h_2*j,h_3*j$ are all distinct, then 
$ n_{1\wedge(2\vee3)}(j)=n_i(j)=n_{(1\wedge 2)\vee(1\wedge 3)}(j)$

On the other hand, if $h_1*j,h_2*j,h_3*j$ are not all distinct, for $\{i,k\}=\{2,3\}$, we have
\begin{align*}
   n_{1\wedge(2\vee 3)}(j)&=\begin{cases}
  n_1(j)\wedge n_k(j) & \text{ if } h_i*j<h_k*j=h_1*j \\
  n_1(j) & \text{ if } h_1*j=h_k*j<h_i*j \\
  n_1(j) & \text{ if } h_1*j<h_k*j=h_i*j \\
  n_2(j)\vee n_3(j) & \text{ if } h_k*j=h_i*j< h_1*j \\
  n_1(j)\wedge(n_2(j)\vee n_3(j)) & \text{ if } h_1*j=h_k*j=h_i*j 
\end{cases}
\end{align*}
Also, if $h_1*j,h_2*j,h_3*j$ are not all distinct, 
$ n_{(1\wedge 2)\vee(1\wedge 3)}(j)=$
\begin{align*}
      \begin{cases}
 n_1(j)\wedge n_k(j) & \text{ if } h_i*j<h_k*j=h_1*j \\
 n_1(j) & \text{ if } h_1*j=h_k*j<h_i*j \\
 n_1(j) & \text{ if } h_1*j<h_k*j=h_i*j \\
 n_2(j)\vee n_3(j) & \text{ if } h_k*j=h_i*j< h_1*j \\
 (n_1(j)\vee n_2(j))\wedge(n_1(j)\vee n_3(j)) & \text{ if } h_1*j=h_k*j=h_i*j 
       \end{cases}
\end{align*}
By the distributivity of $\m F$, we get $n_{1\wedge(2\vee 3)}(j)=n_{(1\wedge 2)\vee(1\wedge3)}(j)$ for all $j\in J$, and by the distributivity of $\m H$, we have $(h_1,n_1)\wedge((h_2,n_2)\vee(h_3,n_3))=((h_1,n_1)\wedge(h_2,n_2))\vee((h_1,n_1)\wedge(h_3,n_3))$.

On the other hand, if $\m H \wr_{\m J, *} \m F$ is distributive, for all 
$n_1,n_2,n_3\in F$ and $h\in H$,  $(h,n_1)\wedge((h,n_2)\vee(h,n_3))=((h,n_1)\wedge(h,n_2))\vee((h,n_1)\wedge(h,n_3))$, hence $(h,n_1\vee(n_2\wedge n_3))=(h,(n_1\vee n_2)\wedge(n_1\vee n_3))$, so $n_1\vee(n_2\wedge n_3)=(n_1\vee n_2)\wedge(n_1\vee n_3)$. Thus, $\m F$ is distributive.  
\end{proof}

We want to stress that the wreath product construction applies to arbitrary $\ell$-pregroups, not only to distributive ones. 

\begin{remark}
    We note that the more general definition where $\m H$ is allowed to be an $\ell$-pregroup, 
$(h,n)^\ell:=(h^\ell, n^\ell \otimes h^\ell)$ and $(h,n)^r:=(h^r, n^\ell \otimes h^r)$ fails: 
$(h,n)^{\ell r}=(h^\ell, n^\ell \otimes h^\ell)^r= (h^{\ell r}, (n^{\ell} \otimes h^\ell)^r \otimes h^{\ell r})=(h,(n^{\ell r} \otimes h^{\ell}) \otimes h)=
(h,n \otimes (h^\ell \overline{\circ} h))\leq (h,n \otimes 1_H)=(h,n)$, but unless  $h \overline{\circ} h^r=1_H$ (i.e., $h$ is invertible, for arbitrary $h$), we do not get equality.
\end{remark}

It is well known that if the monoid $\m F$ acts on the left on a set $\Omega$, then the monoid wreath product $\m H \wr_{J,*} \m F$ acts on $J\times \Omega$. It is not hard to show that if an $\ell$-pregroup $\m F$ acts on a chain $\m \Omega$ then the $\ell$-group wreath product $\m H \wr_{J, *} \m F$ acts on the lexicographic product $\m{J} \overrightarrow{\times}\m \Omega$, by $(h,g)(j,a):=(h(j),g(a))$, for $(h,g)\in\m H \wr_{J, *} \m F$ and  $(j,a)\in {J} \overrightarrow{\times}\Omega$. 


As promised we now show that $\m F_n(\m{J} \overrightarrow{\times}\mathbb{Z})$ is indeed an $\ell$-pregroup wreath product and use this fact to get a second representation theorem for $n$-periodic $\ell$-pregroups.

\begin{theorem}\label{t: repnper}
For every chain $\m J$ and $n \in \mathbb{Z}^+$, 
$\m F_n(\m{J} \overrightarrow{\times}\mathbb{Z})\cong \m {Aut}(\m J) \wr \m F_n(\mathbb{Z})$. Therefore, every $n$-periodic $\ell$-pregroup can be embedded in the wreath product of an $\ell$-group and the simple $n$-periodic $\ell$-pregroup $\m F_n(\mathbb{Z})$.
\end{theorem}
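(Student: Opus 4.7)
The plan is to define the isomorphism directly from the decomposition already supplied by Theorem~\ref{t:F(JxZ)}. I would set $\Phi: F_n(\m{J} \overrightarrow{\times}\mathbb{Z}) \to \m{Aut}(\m J) \times F_n(\mathbb{Z})^J$ by $\Phi(f) := (\widetilde{f}, \overline{f})$, where $\overline{f}(j) := \overline{f}_j$. Since $\m{Aut}(\m J)$ acts on $J$ by evaluation, the codomain is precisely the wreath product $\m{Aut}(\m J) \wr \m F_n(\mathbb{Z})$, and Theorem~\ref{t:F(JxZ)} tells us $\Phi$ is a bijection whose inverse sends $(h,\nu)$ to the function $(j,r) \mapsto (h(j), \nu(j)(r))$.

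The remaining work is to verify that $\Phi$ preserves each $\ell$-pregroup operation. For composition, expanding $(f \circ g)(j,r) = (\widetilde{f}(\widetilde{g}(j)), \overline{f}_{\widetilde{g}(j)}(\overline{g}_j(r)))$ yields $\widetilde{f \circ g} = \widetilde{f} \circ \widetilde{g}$ and $\overline{(f \circ g)}_j = \overline{f}_{\widetilde{g}(j)} \circ \overline{g}_j$, which agrees with the wreath-product formula $(h_1,n_1)(h_2,n_2) = (h_1 \overline{\circ} h_2, (n_1 \otimes h_2) \odot n_2)$ once one recalls $(n_1 \otimes h_2)(j) = n_1(h_2(j))$. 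For the order, the lexicographic structure on $\m J \overrightarrow{\times} \mathbb{Z}$ turns pointwise comparison of $f$ and $g$ into the clause ``$\widetilde{f} \leq \widetilde{g}$, and $\widetilde{f}(j) = \widetilde{g}(j)$ implies $\overline{f}_j \leq \overline{g}_j$'', matching the defining order of the wreath product; an analogous three-case split on how $\widetilde{f}(j)$ compares with $\widetilde{g}(j)$ recovers the piecewise formulas for $n_\jn$ and $n_\mt$. For the left inverse, using that $\widetilde{f}$ is an order-preserving bijection, the minimum in $f^\ell(i,m) = \min\{(j',r') : (i,m) \leq f(j',r')\}$ localizes to $j' = \widetilde{f}^{-1}(i)$, giving $f^\ell(i,m) = (\widetilde{f}^{-1}(i),\, (\overline{f}_{\widetilde{f}^{-1}(i)})^\ell(m))$; this matches $(h,n)^\ell = (h^{-1}, n^\ell \otimes h^{-1})$ upon expanding $(n^\ell \otimes h^{-1})(i) = n(h^{-1}(i))^\ell$, and $f^r$ is dual.

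The ``therefore'' clause then follows by combining the isomorphism with Theorem~\ref{t: emb to F_N(Omega)}: any $n$-periodic $\ell$-pregroup embeds into some $\m F_n(\m J \overrightarrow{\times} \mathbb{Z})$, which $\Phi$ carries isomorphically into $\m{Aut}(\m J) \wr \m F_n(\mathbb{Z})$; since $\m{Aut}(\m J)$ is an $\ell$-group and $\m F_n(\mathbb{Z})$ is a simple $n$-periodic $\ell$-pregroup by Lemma~\ref{l: simple} and Lemma~\ref{l: F_N(Omega)}(2), this is the desired second representation. The main obstacle will be the bookkeeping in the lattice part: reconciling the pointwise join and meet in $\m F_n(\m J \overrightarrow{\times} \mathbb{Z})$ (computed lexicographically) with the piecewise definitions of $n_\jn$ and $n_\mt$ requires tracking three cases of how $\widetilde{f}(j)$ and $\widetilde{g}(j)$ compare, separately for each $j$; this is routine but must be handled carefully to avoid slips.
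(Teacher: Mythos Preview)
Your proposal is correct and follows essentially the same route as the paper: define $\Phi(f)=(\widetilde{f},\overline{f})$ via Theorem~\ref{t:F(JxZ)}, verify composition, order, and the inverse operations, then invoke Theorem~\ref{t: emb to F_N(Omega)} and Lemma~\ref{l: simple} for the final clause. One efficiency remark: once you know $\Phi$ is an order isomorphism (a bijection that preserves and reflects $\leq$), preservation of $\jn$ and $\mt$ is automatic, so the three-case lattice bookkeeping you flag as the main obstacle can be skipped entirely.
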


\begin{proof}
By Theorem~\ref{t:F(JxZ)},  every $f$ in $\m F_n(\m{J} \overrightarrow{\times}\mathbb{Z})$ can be identified with the pair $(\widetilde{f}, \overline{f})$ in $\m {Aut}(\m J) \times  (\m F_n(\mathbb{Z}))^J$, where $\overline{f}(j)=\overline{f}_j$, i.e., $\overline{f}=(\overline{f}_j)_{j \in J}$. We will verify that composition of $f$'s corresponds to multiplication of pairs  $(\widetilde{f}, \overline{f})$ in the wreath product $\m {Aut}(\m J) \wr \m F_n(\mathbb{Z})=\m {Aut}(\m J) \ltimes  (\m F_n(\mathbb{Z}))^J$ under the identification $f\equiv (\widetilde{f}, \overline{f})$. Indeed, for every $(j,m)\in J \times \mathbb{Z}$ and $f,g$ in $\m F_n(\m{J} \overrightarrow{\times}\mathbb{Z})$, we have 
%
$$\begin{array}{rl}
 (f \circ g)(j,m)  
 & =f(g(j,m))
 =f(\widetilde{g}(j), \overline{g}_j(m))
 =(\widetilde{f}(\widetilde{g}(j)),\overline{f}_{\widetilde{g}(j)}(\overline{g}_j(m)))\\
     & =((\widetilde{f} \circ \widetilde{g})(j), (\overline{f}_{\widetilde{g}(j)} \circ \overline{g}_j)(m))
=((\widetilde{f} \circ \widetilde{g})(j), 
(\overline{f}\otimes \widetilde{g}) \odot \overline{g})_{j}(m) )
 \\ &
=(\widetilde{f} \circ \widetilde{g}, 
(\overline{f}\otimes \widetilde{g}) \odot \overline{g} )(j,m)
=((\widetilde{f}, \overline{f})(\widetilde{g}, \overline{g}))(j,m)\end{array}$$
where we used that 
$
\overline{f}_{\widetilde{g}(j)} \circ \overline{g}_j  
= \overline{f}(\widetilde{g}(j)) \circ \overline{g}(j)
= (\overline{f}\otimes {\widetilde{g})(j)} \circ \overline{g}(j) 
= 
(\overline{f}\otimes \widetilde{g}) \odot \overline{g})(j)
= 
(\overline{f}\otimes \widetilde{g}) \odot \overline{g})_{j}
$
which is, in turn, based on 
$
\overline{f}(\widetilde{g}(j))=
(\overline{f}\otimes {\widetilde{g})(j)}$, a general fact we already established above as $n(h(j))=(n\otimes h)(j)$.


For the order, note that $f \leq g$ iff $(\widetilde{f}(j), (\overline{f}_j(m))_{j \in J}) \leq (\widetilde{g}(j), (\overline{g}_j(m))_{j \in J})$, for all $(j,m)$, iff 
$\widetilde{f}\leq \widetilde{g}$ and 
$\widetilde{f}(j) = \widetilde{g}(j) \Rightarrow \overline{f}_j(m) \leq \overline{g}_j(m)$ for all $(j,m)$, iff $\widetilde{f}\leq \widetilde{g}$ and 
$\widetilde{f}(j) = \widetilde{g}(j) \Rightarrow \overline{f}_j \leq \overline{g}_j$. for all $j$.

Also, we have that if $f$ corresponds to $(\widetilde{f}, \overline{f})$, then 
$f^\ell\equiv (\widetilde{f}^{-1}, \overline{f}^\ell \otimes \widetilde{f}^{-1})$, where $\overline{f}^\ell(j):=\overline{f}(j)^\ell$ is  given by the definition of $^\ell$ to the direct product $\m F_n(\mathbb{Z})^J$. 

By Lemma~\ref{l: bounded preimage} for all $(j,n)\in {J} \overrightarrow{\times}\mathbb{Z}$ there exist $(i,m)\in{J} \overrightarrow{\times}\mathbb{Z}$ such that $f(i,m-1)<(j,n)\leq f(i,m)$ and $f^{\ell}(j,n)=(i,m)$, so $(\tilde{f}(i),\overline{f}_{i}(m-1))<(j,n)\leq (\tilde{f}(i),\overline{f}_{i}(m))$. Then $j=\tilde{f}(i)$ and $\overline{f}_{i}(m-1)<n\leq \overline{f}_{i}(m)$, therefore $\overline{f}_{i}^{\ell}(n)=m$. So $f^{\ell}(j,n)=(i,m)=(\tilde{f}^{-1}(j),\overline{f}_{\tilde{f}^{-1}(j)}^{\ell}(n))=(\widetilde{f}^{-1}, \overline{f}^\ell \otimes \widetilde{f}^{-1})(j,n)$.   

That $\m F_n(\mathbb{Z})$ is simple follows from Lemma~\ref{l: simple}.
\end{proof}
  
\begin{remark}
 We mention that in the definition of the monoid $\m H$ we denoted the operation by $\overline{\circ}$ because in our application $\m H=\m{Aut}(\m J)$ the operation is the usual composition $\circ$ of functions on $J$. Furthermore, we denoted by $\ocirc$ the operation of $\m F$, as in our case of $\m F=\m F_n(\mathbb{Z})$  it is composition $\circ$ of functions on $\mathbb{Z}$. Finally, we denoted by $\odot$ the operation of $\m N=\m F^J$, as it is the extension of the composition operation of $\m F$ to the direct power $\m F^J$. Typically, the same symbol is used for the operation in the direct product, but when this operation is functional composition  then $(n_1 \odot n_2)(j)=n_1(j) \circ n_2(j)$ looks better than $(n_1 \circ n_2)(j)$ which could be easily misinterpreted as $n_1(n_2(j))$.   
\end{remark}

  \section{The join of the periodic varieties}\label{s: joinperiodic}

In \cite{GJ} it is shown that each each periodic  $\ell$-pregroup is distributive, 
so $\mathsf{LP_n}\subseteq \mathsf{DLP}$, for all $n$. In this section we show that the join of all of the varieties $\mathsf{LP_n}$, for $n \in \mathbb{Z}^+$, is $\mathsf{DLP}$.

As mentioned earlier, $\ell$-pregroups are exactly the involutive residuated lattices that satisfy  $(xy)^\ell =y^\ell x^\ell$ and $x^{r\ell}=x=x^{\ell r}$; in the language of residuated lattices this implies $x+y=xy$, $0=1$, $\ln x=x^r$ and $\rn x=x^\ell$. Given $n \in \mathbb{Z}^+$, an involutive residuated lattice is called \emph{$n$-periodic} if it satisfies $\ln^n x=\rn^nx$; therefore, the notion of $n$-periodicity for $\ell$-pregroups is a specialization of the one for involutive residuated lattices. Since $n$-periodicity is defined equationally, the class of all $n$-periodic involutive residuated lattices is a variety, for each $n$. In \cite{GJframes} it is proved that the join of all of these $n$-periodic involutive residuated lattice varieties is equal to the whole variety of involutive residuated lattices. This proof relies, among other things, on proof-theoretic arguments on an analytic sequent calculus for the class of involutive residuated lattices; since an analytic calculus is not known for the class of distributive $\ell$-pregroups, we cannot use the same methods here, but we rely on the method of diagrams.


  \subsection{Diagrams and $n$-periodicity}

  We recall some definitions from \cite{GG}. The main idea behind these notions is to try to capture the failure of an equation in an algebra $\m F(\m \Omega)$ in a finitistic way:  retain only finitely many points of $\m \Omega$ and record the behavior of functions of $\m F(\m \Omega)$ on these points to still witness the failure. 
   A \emph{c-chain}, or \emph{chain with covers}, is a triple $(\Delta,\leq,\diagcov)$, consisting of a finite chain $(\Delta,\leq)$ and a subset of the covering relation, ${\diagcov}\subseteq{\prec}$, i.e.,  if $a\diagcov b$, then $a$ is covered by $b$.
   Given a chain $(\Delta, \leq)$, a partial function $g$ over $\Delta$ is called \emph{order-preserving} if, for all $a,b\in Dom(g)$, $a\leq b$ implies $g(a)\leq g(b)$; such a $g$ is intended as the restriction of an $f \in \m F(\m \Omega)$ to $\m \Delta$.
    A \emph{diagram} $(\m \Delta, g_{1},\ldots, g_{l})$ consists of a c-chain $\m \Delta$ and  order-preserving partial functions $g_{1},\ldots,g_{l}$ on $(\Delta, \leq)$, where $l\in\mathbb{N}$.

 Given  a c-chain $(\Delta,\leq,\diagcov)$ and
an order-preserving partial function $g$ on $\Delta$, we define the relation $g^{[\ell]}$ by: for all $x, b\in\Delta$, $(x,b) \in g^{[\ell]}$ iff $b\in Dom(g)$ and there exists $a\in Dom(g)$ such that $a\diagcov b$ and $g(a)<x\leq g(b)$. 
We also define the relation $g^{[r]}$ by: for all $x, a\in\Delta$, $(x,a) \in g^{[r]}$ iff $a\in Dom(g)$ and there exists $b\in Dom(g)$, such that $a\diagcov b$ and $g(a)\leq x< g(b)$. 
 The intention is that $g^{[\ell]}$ will correspond to $f^\ell$, for $f \in \m F(\m \Omega)$, and the covering relation $\diagcov$ is crucial for this correspondence and for the correct calculation of the two inverses.

\begin{lemma}\textnormal{\cite{GG}}\label{l: g^[ell] order preserving}
If $\m \Delta$ is a c-chain and $g$ is an order-preserving partial function on $\m \Delta$, then $g^{[\ell]}$ and $g^{[r]}$ are order-preserving partial functions. 
Therefore, for all $x, b\in\Delta$, 
    $g^{[\ell]}(x)=b$ iff: $b, b-1\in Dom(g)$, $b-1\diagcov b$ and $g(b-1)<x\leq g(b)$.
\end{lemma}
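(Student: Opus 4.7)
The plan is to verify three things: that $g^{[\ell]}$ is single-valued (hence a partial function), that it is order-preserving, and then derive the ``therefore'' reformulation; the case of $g^{[r]}$ is dual. The key feature to exploit throughout is that $\diagcov$ is a subset of the cover relation $\prec$, so whenever $a \diagcov b$ we actually have $a = b-1$ in the notation fixed earlier in the paper.

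First, for single-valuedness, suppose $(x, b_1), (x, b_2) \in g^{[\ell]}$ with $b_1 \neq b_2$. Without loss of generality (since $\Delta$ is a chain) take $b_1 < b_2$. From the definition of $g^{[\ell]}$ there exists $a_2 \in Dom(g)$ with $a_2 \diagcov b_2$ and $g(a_2) < x \leq g(b_2)$; since $a_2 \diagcov b_2$ is a cover and $b_1 < b_2$, we must have $b_1 \leq a_2$. Order-preservation of $g$ then gives $g(b_1) \leq g(a_2) < x$, contradicting the condition $x \leq g(b_1)$ coming from $(x, b_1) \in g^{[\ell]}$. Hence $g^{[\ell]}$ is a partial function.

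Next, for order-preservation, assume $x_1 \leq x_2$ with $g^{[\ell]}(x_1) = b_1$ and $g^{[\ell]}(x_2) = b_2$, and suppose for contradiction that $b_2 < b_1$. By the definition there is $a_1 \in Dom(g)$ with $a_1 \diagcov b_1$ and $g(a_1) < x_1 \leq g(b_1)$; since $a_1 \prec b_1$ and $b_2 < b_1$, we get $b_2 \leq a_1$, so $g(b_2) \leq g(a_1) < x_1 \leq x_2 \leq g(b_2)$, a contradiction. Thus $b_1 \leq b_2$, proving order-preservation. The argument for $g^{[r]}$ is entirely symmetric, with inequalities of the form $g(a) \leq x < g(b)$ in place of $g(a) < x \leq g(b)$.

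Finally, the ``therefore'' statement is just a restatement using the unique-predecessor notation. If $g^{[\ell]}(x) = b$, then by definition $b \in Dom(g)$ and there exists $a \in Dom(g)$ with $a \diagcov b$ and $g(a) < x \leq g(b)$; but $a \diagcov b$ forces $a \prec b$ in the chain $\Delta$, so $a = b-1$, giving $b-1 \in Dom(g)$, $b-1 \diagcov b$, and $g(b-1) < x \leq g(b)$. The converse direction is immediate from the definition by taking $a := b-1$. The main (very mild) obstacle in the whole argument is simply being careful that the covering condition $a \diagcov b$ pins $a$ down as the unique predecessor $b-1$; without that, single-valuedness would fail, which is precisely why $\diagcov$ is built into the definition of a c-chain.
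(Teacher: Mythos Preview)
Your proof is correct. The paper does not supply its own argument for this lemma; it is stated with a citation to \cite{GG} and no proof is given here, so there is nothing in the present paper to compare your approach against. Your argument is the natural one: use the fact that $\diagcov \subseteq {\prec}$ to force the witness $a$ in the definition of $g^{[\ell]}$ to be the unique predecessor $b-1$, from which single-valuedness, order-preservation, and the reformulation all follow by straightforward chain reasoning.
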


Given a c-chain $\m \Delta$ and an order-preserving partial function $g$ on $\m \Delta$, we define $g^{[n]}$, for all $n \in \mathbb{N}$, recursively by:  $g^{[0]}=g$ and $g^{[k+1]}:=(g^{[k]})^{[\ell]}$. Also,  we define $g^{[-n]}$, for all $n \in \mathbb{N}$, recursively by $g^{[-(k+1)]}:=(g^{[k]})^{[r]}$. Lemma~\ref{l: g^[ell] order preserving} shows that if $g$ is a partial function on $\m \Delta$, then $g^{[n]}$ is a partial function for all $n\in \mathbb{Z}$.

\medskip

We will now define the notion of $n$-periodicity for a partial function $g$ in a diagram $\m \Delta$, which is intended to  capture the fact that, when $\Delta$ is identified with a subset of $\mathbb{Z}$, $g$ is the restriction to $\Delta$ of an $n$-periodic function of $\mathbb{Z}$, i.e., a function in $\m F_n(\mathbb{Z})$. As $\Delta$ might not be convex in $\mathbb{Z}$, the information of the spacing between elements of $\Delta$ is lost and only their relative ordering is retained (together with some coverings that are needed for the correct calculation of iterated inverses $g^{[m]}$). Given $\m \Delta$ and $g$, whether $g$ deserves to be called $n$-periodic depends on the existence of such a spacing (which is provided by a way of viewing $\m \Delta$ inside $\mathbb{Z}$). This leads to the following definitions. 

A \emph{spacing embedding} of c-chains is an injection that preserves the order and the covering relations. Note that such an embedding also reflects the order (due to the fact that we are working with chains) but may not reflect the covering relation. Given a spacing embedding $e: \m \Delta_1\rightarrow \m \Delta_2$ and a partial function $g$ on a c-chain $\m \Delta_1$, the \emph{counterpart} of $g$ in $\m \Delta_2$ is defined to be the partial map $g^e=:e\circ g \circ e^{-1}$. When $\Delta_2=\mathbb{Z}$, spacing embeddings that are shifts of each other via automorphisms of $\mathbb{Z}$ are essentially equivalent for our purposes. So we will consider only spacing embeddings where the least element of their image is $0$. The  \emph{height} of a spacing embedding $e: \m \Delta \rightarrow (\mathbb{Z}, \leq_{\mathbb{Z}},\diagcov_{\mathbb{Z}})$ is then defined to be  $\max(e[\Delta])$, i.e., the size of the convexification of the image $e[\Delta]$ in $\mathbb{Z}$  (minus 1).
The following definition of $n$-periodicity is inspired by the equivalences in Lemma~\ref{l:N-periodic}.

A  
partial function $g$ on a c-chain $\m \Delta$ is called \emph{$n$-periodic} with respect to  a spacing embedding $e: \m \Delta \rightarrow (\mathbb{Z}, \leq_{\mathbb{Z}},\diagcov_{\mathbb{Z}})$, where $n\in \mathbb{Z}^+$, if ($\min (e[\Delta])=0$ and)
for all $x,y\in Dom(g^e)$ and $k\in\mathbb{Z}$, 
$$x\leq_{\mathbb{Z}} y+kn \Rightarrow  g^e(x)\leq_{\mathbb{Z}} g^e(y)+ kn.$$
or equivalently, because $\mathbb{Z}$ is a chain, and by setting $k:=-k$, 
$$g^e(y)<_{\mathbb{Z}} g^e(x)+ kn \Rightarrow  y<_{\mathbb{Z}} x+kn.$$
Therefore, $g$ is $n$-periodic with respect to $e$ iff $g^e$ is $n$-periodic with respect to the identity map on $\mathbb{Z}$. Observe that given that $e$ preserves and reflects the order, $n$-periodic partial functions are also order preserving.


 A main issue with this definition is that it provides no insight on whether it is possible to consider only finitely-many spacing embeddings or, given such an embedding,  whether it suffices to consider only finitely-many $k$'s in the $n$-periodicity condition.   Actually, Lemma~\ref{l:extention of counterparts} below shows that this definition merely translates, into a more palatable form, the demand that there exists a counterpart of $g$ that extends to an $n$-periodic function of $\mathbb{Z}$. 
In Lemma~\ref{l:check periodicity}  we  show that checking finitely-many $k$'s suffices. In order to obtain decidability results, we  later also prove that finitely-many  spacing embeddings suffice.

\begin{lemma}\label{l:check periodicity} 
If $\m \Delta$ is a c-chain,  $g$ is a partial function on $\m \Delta$ and $e:\m \Delta\rightarrow\mathbb{Z}$ is a spacing embedding of height $d$, then  $g$ is $n$-periodic with respect to $e$ iff for all 
$x,y\in Dom(g^e)$
and $|k|\leq \lceil d/n \rceil$ 
we have $x \leq y+kn \Rightarrow  g^e(x)\leq g^e(y)+ kn$.
\end{lemma}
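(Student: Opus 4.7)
The forward direction is immediate: restricting the $n$-periodicity condition to a bounded range of $k$'s only weakens it. The content of the lemma is the reverse direction, and my plan is a clean three-case argument exploiting the confinement of $Dom(g^e)$ and $Range(g^e)$ to a small integer interval.

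The key observation is that since $e$ has height $d$ with $\min(e[\Delta])=0$, we have $e[\Delta]\subseteq\{0,1,\dots,d\}$, so both arguments and values of $g^e=e\circ g\circ e^{-1}$ lie in $[0,d]$. Assuming the bounded condition holds, I would fix arbitrary $x,y\in Dom(g^e)$ and $k\in\mathbb{Z}$ with $x\leq y+kn$ and split on $k$. When $|k|\leq\lceil d/n\rceil$, the conclusion is exactly the hypothesis. When $k>\lceil d/n\rceil$, we have $kn\geq(\lceil d/n\rceil+1)n\geq d+n>d$, so
\[
g^e(y)+kn\;\geq\;0+kn\;>\;d\;\geq\;g^e(x),
\]
and the conclusion holds automatically — in fact without even using the premise $x\leq y+kn$. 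When $k<-\lceil d/n\rceil$, symmetrically $kn<-d$, hence
\[
y+kn\;\leq\;d+kn\;<\;0\;\leq\;x,
\]
contradicting $x\leq y+kn$, so the implication is vacuously true.

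No case requires any structural property of $g^e$ beyond the interval bound on its values; the only thing to track is the arithmetic of $\lceil d/n\rceil$ against $d$, and there is no real obstacle to navigate. The same three-case template works for the equivalent reformulation $g^e(y)<g^e(x)+kn \Rightarrow y<x+kn$ obtained by negation, so I would state the proof once and remark that the other form follows.
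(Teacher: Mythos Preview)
Your proof is correct and follows the same three-case split as the paper. The only difference is in the case $k>\lceil d/n\rceil$: the paper observes $x\leq y+d\leq y+\lceil d/n\rceil n$ and then invokes the bounded hypothesis at the boundary value $\lceil d/n\rceil$ to get $g^e(x)\leq g^e(y)+\lceil d/n\rceil n\leq g^e(y)+kn$, whereas you bypass the hypothesis entirely and use only the range confinement $g^e(x),g^e(y)\in[0,d]$ to conclude $g^e(y)+kn>d\geq g^e(x)$ directly. Both routes are sound; yours is marginally more self-contained since it makes explicit that for large $|k|$ the implication holds automatically, independent of the bounded hypothesis.
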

\begin{proof} 
For the non-trivial (backward) direction, 
we assume that $x\leq y+kn$, $x,y\in Dom(g^e)$, and $k\in\mathbb{Z}$ with $k> \lceil d/n \rceil$. 
Since, $x\leq y+d\leq y+\lceil d/n \rceil n\leq y+kn$,
we get $g^e(x)\leq g^e(y)+\lceil d/n \rceil n\leq g^e(y)+kn$, by the $n$-periodicity of $g$ for the value $\lceil d/n \rceil$. If $k< -\lceil d/n \rceil$ and $x,y\in Dom(g^e)$, then  $-d\leq x-y$, so $y+kn<y-\lceil d/n \rceil n\leq  y-d \leq x$; hence the condition is holds vacuously.
\end{proof}

For convenience, for a fixed $n\in\mathbb{Z}^+$, for every $x\in\mathbb{Z}$ we denote by $Rx$ the remainder and by  $Qx$ the quotient of dividing $x$ by $n$. So, $x-Rx=Qx\cdot n$ and we denote this value by $Sx$.

\begin{lemma}\label{l:extention of counterparts}
If  $\m{\Delta}$ is a c-chain, $g$ is
a  partial function on $\Delta$ and $e$ is a spacing embedding $e$ on $\m \Delta$, then $g$ is $n$-periodic with respect to $e$ iff its counterpart $g^e$  can be extended to a function in $F_n(\mathbb{Z})$. 
\end{lemma}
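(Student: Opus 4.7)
The backward direction is immediate: given an extension $\hat g \in F_n(\mathbb{Z})$ of $g^e$, Lemma~\ref{l:N-periodic}(4) applied to $\hat g$ yields $x \leq y + kn \Rightarrow \hat g(x) \leq \hat g(y) + kn$ on all of $\mathbb{Z}$, and restricting to $x,y \in Dom(g^e)$ gives exactly the $n$-periodicity of $g$ with respect to $e$.

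For the forward direction, I will assume $g$ is $n$-periodic with respect to $e$, write $D := Dom(g^e)$ (a finite subset of $\mathbb{Z}$), and construct $\hat g$ in two stages. The first stage extends $g^e$ to $D' := D + n\mathbb{Z}$ by the rule $\hat g(d + kn) := g^e(d) + kn$. The key well-definedness check is: if $d_1 + k_1 n = d_2 + k_2 n$ in $D'$, then both inequalities $d_1 \leq d_2 + (k_2 - k_1)n$ and $d_2 \leq d_1 + (k_1 - k_2)n$ hold, so applying $n$-periodicity of $g$ twice forces $g^e(d_1) = g^e(d_2) + (k_2 - k_1)n$, which rearranges to $g^e(d_1) + k_1 n = g^e(d_2) + k_2 n$. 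The analogous one-sided argument shows $\hat g$ is order-preserving on $D'$ and satisfies the $n$-periodicity implication there.

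The second stage extends $\hat g$ from $D'$ to all of $\mathbb{Z}$ via the floor map: for $x \in \mathbb{Z}$, set $\hat g(x) := \hat g(d_x)$, where $d_x := \max\{d \in D' : d \leq x\}$; this maximum exists because $D'$ is unbounded below and has gaps bounded by $n$ (as $D' = D + n\mathbb{Z}$ intersects every interval of length $n$). To confirm $\hat g \in F_n(\mathbb{Z})$ via Lemma~\ref{l: F_N(Z)}, I verify three things: order-preservation follows from $x \leq y \Rightarrow d_x \leq d_y$; the $n$-periodicity implication reduces to the identity $d_{y+kn} = d_y + kn$, since then $x \leq y + kn$ gives $d_x \leq d_y + kn$ and thus $\hat g(x) \leq \hat g(y) + kn$ by order-preservation on $D'$; and finite preimages follow because $\hat g$ is constant on each (finite) gap of $D'$ and takes only finitely many distinct values on any single period $[kn, (k+1)n)$. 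Finally, $\hat g$ extends $g^e$ because $D \subseteq D'$ and $d_d = d$ for every $d \in D$.

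The principal obstacle is the well-definedness of Stage~1, which is exactly where the bidirectional character of the $n$-periodicity hypothesis is essential: one must promote two opposing inequalities into an equation reconciling different representations $d + kn$ of the same element of $D'$. Once Stage~1 is in place, the floor-based extension in Stage~2 and the verification via Lemma~\ref{l: F_N(Z)} amount to routine bookkeeping.
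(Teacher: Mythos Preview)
Your proof is correct and takes a genuinely different route from the paper's own argument. You extend $g^e$ \emph{periodically first} to $D' = D + n\mathbb{Z}$ and then fill the remaining gaps via a floor map $x \mapsto d_x$. The paper does the two steps in the opposite order: it first reduces $g^e$ modulo $n$ to a partial function $\hat h$ on $[0,n)_{\mathbb{Z}}$ (via $\hat h(Rx) = g^e(x) - Sx$), then fills the gaps inside that single period using a ceiling-type rule, and only afterwards extends periodically to all of $\mathbb{Z}$. Your well-definedness check in Stage~1 is exactly the same computation the paper carries out when showing $\hat h$ is single-valued, just packaged differently. Your approach avoids the quotient/remainder bookkeeping and is arguably cleaner; the paper's version has the advantage of making the ``one period determines everything'' picture explicit, which is reused later in the $f^\circ$/$f^*$ decomposition.

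Two minor points. First, your construction needs $D \neq \emptyset$ for $d_x$ to exist; the empty case is trivial (any element of $F_n(\mathbb{Z})$ extends the empty function) and the paper disposes of it in a sentence. Second, your justification for finite preimages is a bit roundabout: once you have established $\hat g(x+n) = \hat g(x) + n$ on all of $\mathbb{Z}$ (which follows from your $n$-periodicity implication applied with $k = \pm 1$ and $y = x$), a single line gives $\hat g^{-1}[a] \subseteq (b-n, b+n)$ for any $b$ with $\hat g(b) = a$, which is the argument the paper uses.
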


\begin{proof}
First note that the backward direction holds by Lemma~\ref{l:N-periodic}. Note that the empty partial function is vacuously $n$-periodic, so we will assume that the partial functions are non-empty.
For the forward direction, we note that $h:=g^e$ is an $n$-periodic partial function  on $\mathbb{Z}$ with respect to the identity spacing embedding. We define the partial function  $\hat{h}$ on $\mathbb{Z}$ by $\hat{h}(Rx)=h(x)-Sx$ for all $x\in Dom(h)$, hence that $Dom(\hat{h}) \subseteq [0,n)_\mathbb{Z}$; we will show that $\hat{h}$ is  well-defined (single-valued) and $n$-periodic at the same time. If $x,y\in Dom(h)$, $k\in\mathbb{Z}$ and $Rx\leq Ry+kn$, then $Rx+Sx\leq Ry+Sy+(Sx-Sy)+kn$, so $x\leq y+kn+(Sx-Sy)$;
since $Sx$ and $Sy$ are multiples of $n$,  the periodicity of $h$ yields $h(x)\leq h(y)+kn+(Sx-Sy)$, so $(h(x)-Sx)\leq (h(y)-Sy)+kn$. By applying this for $k=0$ (and for two inequalities simultaneously) we get $Rx=Ry \Rightarrow (h(x)-Sx)= (h(y)-Sy)$. By applying this for arbitrary $k$ we get   $Rx\leq Ry+kn \Rightarrow \hat{h}(Rx)\leq \hat{h}(Ry)+kn$.

We extend $\hat{h}$ to a function $f:[0,n)_\mathbb{Z}\rightarrow\mathbb{Z}$ on the whole period, 
given by $f(x)=\hat{h}(a_x)$, 
where $a_x$ is the smallest $a\in Dom(\hat{h})$ with $x\leq a$, if such an $a$ exists, and $a_x$ is the biggest  $a\in Dom(\hat{h})$, otherwise.
To verify that $f$ is $n$-periodic with respect to the identity, by Lemma~\ref{l:check periodicity} it is enough to check the $n$-periodicity condition for $k\in \{-1,0,1\}$; since $x\nleq y-n$ for all $x,y\in[0,n)_\mathbb{Z}$, we consider only $k\in \{0,1\}$. If $x,y\in[0,n)_\mathbb{Z}$ and  $x\leq y+n$, since $a_x,a_y\in[0,n)$ we have $a_x\leq a_y+n$, so $\hat{h}(a_x)\leq\hat{h}(a_y)+n$ by the $n$-periodicity of $\hat{h}$, hence $f(x)\leq f(y)+n$. Also, for all $x,y\in[0,n)_\mathbb{Z}$, if $x\leq y+0$, then $a_x\leq a_y$, so $\hat{h}(a_x)\leq\hat{h}(a_y)$, hence $f(x)\leq f(y)$. Therefore $f$ is a partial function on $\mathbb{Z}$ that is $n$-periodic with respect to the identity.
 
 Finally, we extend $f$ to a total function $\hat{f}$ on  $\mathbb{Z}$, by 
 $\hat{f}(x)=f(Rx)+Sx$  for all $x\in\mathbb{Z}$; note that $\hat{f}$ extends $f$, since $Rx=x$ and $Sx=0$, for all $x \in [0,n)_\mathbb{Z}$. To show that $\hat{f}\in F_n(\mathbb{Z})$, we will verify the conditions of Lemma~\ref{l: F_N(Z)}. For condition (2), if $x,y,k\in\mathbb{Z}$ and $x\leq y+kn$, then $Rx\leq Ry+Sy-Sx+kn$, so by the $n$-periodicity of $f$ we have
 $  f(Rx)\leq f(Ry)+Sy-Sx+kn$, so $f(Rx)+Sx\leq f(Ry)+Sy+kn$, i.e.,
$\hat{f}(x)\leq\hat{f}(y)+kn$. In particular, $\hat{f}$ is order preserving.
For condition (1), if $b\in\mathbb{Z}$ and $x\in\hat{f}^{-1}[\hat{f}(b)]$, then $\hat{f}(x)=\hat{f}(b)$,  so $\hat{f}(b)-n<\hat{f}(x)<\hat{f}(b)+n$. Also, $\hat{f}(b\pm n)=f(R(b\pm n))+S(b\pm n)=f(Rb)+Sb\pm n=\hat{f}(b)\pm n$,  so $\hat{f}(b-n)<\hat{f}(x)<\hat{f}(b+n)$, and by order preservation $b-n<x<b+n$. Therefore, 
$\hat{f}^{-1}[\hat{f}(b)]\subseteq [b-n,b+n]$. Consequently, $\hat{f}\in F_n(\mathbb{Z})$.

To see that $\hat{f}$ extends $h$, note that if $x\in Dom(h)$ then $Rx\in Dom(\hat{h})$ and $\hat{h}(Rx)=h(x)-Sx$. By definition of $\hat{f}$ and given that $f$ extends $\hat{h}$, we have $\hat{f}(x)=f(Rx)+S(x)=\hat{h}(Rx)+Sx=h(x)-Sx+Sx=h(x)$.
\end{proof}

The next lemma shows that $^{[\ell]}$ and $^{[r]}$ keep us within the setting of  $n$-periodicity.

\begin{lemma}\label{l: ellnperiodic}
If $\m \Delta$ is a c-chain and $g$ is partial function on $\m \Delta$ that is $n$-periodic with respect to a spacing embedding $e$, then the partial functions $g^{[\ell]}$ and $g^{[r]}$ are $n$-periodic  on $\m \Delta$ with respect to $e$. Moreover, $(g^{[\ell]})^e\subseteq (g^e)^{[\ell]}$ and  $(g^{[r]})^e\subseteq (g^e)^{[r]}$. 
\end{lemma}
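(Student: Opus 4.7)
My strategy is to reduce the $n$-periodicity of $g^{[\ell]}$ and $g^{[r]}$ to facts already proved about $F_n(\mathbb{Z})$, via the extension lemma. By Lemma~\ref{l:extention of counterparts} applied to the hypothesis, the counterpart $g^e$ extends to some $f \in F_n(\mathbb{Z})$. Since $F_n(\mathbb{Z})$ is a subalgebra of $F(\mathbb{Z})$ by Lemma~\ref{l: F_N(Omega)}(2), the elements $f^\ell$ and $f^r$ automatically lie in $F_n(\mathbb{Z})$. Thus, if I can show $(g^{[\ell]})^e \subseteq f^\ell$ and $(g^{[r]})^e \subseteq f^r$, a second application of Lemma~\ref{l:extention of counterparts} will immediately deliver the $n$-periodicity of $g^{[\ell]}$ and $g^{[r]}$ with respect to $e$.

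I would establish the required inclusions by factoring through $(g^e)^{[\ell]}$ and $(g^e)^{[r]}$, which simultaneously takes care of the ``moreover'' part of the statement. First I verify $(g^{[\ell]})^e \subseteq (g^e)^{[\ell]}$ directly from the definitions: if $g^{[\ell]}(x)=b$ with witness $a\diagcov_\Delta b$ and $g(a)<x\leq g(b)$, then since a spacing embedding preserves both the order and the covering relation, we obtain $e(a)\diagcov_\mathbb{Z} e(b)$ and $g^e(e(a))<e(x)\leq g^e(e(b))$. This is precisely the witness that $(g^e)^{[\ell]}(e(x)) = e(b) = (g^{[\ell]})^e(e(x))$. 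The case of $^{[r]}$ is entirely dual.

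Next, I would prove $(g^e)^{[\ell]} \subseteq f^\ell$ using the cleaner characterization of Lemma~\ref{l: g^[ell] order preserving}: if $(g^e)^{[\ell]}(x)=b$, then $b-1, b \in Dom(g^e)$ with $g^e(b-1)<x\leq g^e(b)$. Because $f$ extends $g^e$, we get $f(b-1)<x\leq f(b)$, and the description $f^\ell(x)=\min\{y : x\leq f(y)\}$ then forces $f^\ell(x)=b$. Chaining the two inclusions yields $(g^{[\ell]})^e \subseteq (g^e)^{[\ell]} \subseteq f^\ell$, as required; together with the dual argument for $^{[r]}$, this completes the proof.

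I do not expect any serious obstacle: the proof is largely bookkeeping that leverages two packaged facts, namely the subalgebra property of $F_n(\mathbb{Z})$ and the extension characterization of $n$-periodicity. The one point that requires care is that the covering relation $\diagcov_\Delta$ transports correctly to $\diagcov_\mathbb{Z}$ under $e$, so that witnesses of the diagrammatic operation $^{[\ell]}$ become witnesses of the counterpart operation on $\mathbb{Z}$; but this is precisely the defining property of a spacing embedding, so the translation is automatic.
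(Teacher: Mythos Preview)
Your proof is correct but takes a genuinely different route from the paper. The paper argues directly: setting $h:=g^e$, it verifies by hand that $h^{[\ell]}$ satisfies the $n$-periodicity inequality. Given $x\leq y+kn$ with $a=h^{[\ell]}(x)$, $b=h^{[\ell]}(y)$, it chains $h(a-1)<x\leq y+kn\leq h(b)+kn$ and invokes the contrapositive form of $n$-periodicity of $h$ to obtain $a-1<b+kn$, hence $a\leq b+kn$. It then shows $(g^{[\ell]})^e$ is a restriction of $h^{[\ell]}$ exactly as you do. Your approach instead lifts $g^e$ to a total $f\in F_n(\mathbb{Z})$ via Lemma~\ref{l:extention of counterparts}, uses the subalgebra property to get $f^\ell\in F_n(\mathbb{Z})$, shows $(g^{[\ell]})^e\subseteq (g^e)^{[\ell]}\subseteq f^\ell$, and reads off $n$-periodicity from the backward direction of Lemma~\ref{l:extention of counterparts}. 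The paper's argument is more self-contained and avoids any dependence on the extension lemma, while yours is conceptually cleaner---it replaces an explicit inequality chase by an appeal to already-packaged facts---at the cost of invoking Lemma~\ref{l:extention of counterparts} twice and Lemma~\ref{l: F_N(Omega)}. Both handle the ``moreover'' clause identically.
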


\begin{proof}
By Lemma~\ref{l: g^[ell] order preserving} we know that $g^{[\ell]}$ and $g^{[r]}$ are partial functions over $\m \Delta$. Let $h:=g^e$ be the counterpart of $g$ with respect to a spacing embedding $e$ witnessing the  $n$-periodicity of $g$; then $h$ is $n$-periodic with respect to $id_\mathbb{Z}$.

Let $x,y\in Dom(h^{[\ell]})$, and  set $a:=h^{[\ell]}(x)$ and $b:=h^{[\ell]}(y)$. Then, by definition, $a,a-1,b, b-1\in Dom(h)$, $h(a-1)<x\leq h(a)$ and $h(b-1)<y\leq h(b)$.
If $x\leq y+kn$ for some $k\in\mathbb Z$, then  $h(a-1)<x\leq  y+kn\leq h(b)+kn$, so by the $n$-periodicity of $h$, we get $a-1< b+kn$.  Consequently, $h^{[\ell]}(x)=a\leq b+kn= h^{[\ell]}(y)+kn$. 
So $h^{[\ell]}$ is an $n$-periodic partial function  with respect to $id_\mathbb{Z}$; in particular, any restriction of it is also $n$-periodic.

To show that $(g^{[\ell]})^e=h^{[\ell]}|_{Dom ((g^{[\ell]})^e)}$, note that if $x\in Dom((g^{[\ell]})^e)$, then $e^{-1}(x)\in Dom(g^{[\ell]})$, so there exist $a,a-1\in Dom(g)$, such that $g(a-1)<e^{-1}(x)\leq g(a)$ and $g^{[\ell]}(e^{-1}(x))=a$. Hence, $ege^{-1}(e(a)-1)=ege^{-1}(e(a-1))<x\leq ege^{-1}(e(a))$, i.e.,  $h(e(a)-1)<x\leq h(e(a))$, thus $h^{[\ell]}(x)=e(a)=eg^{[\ell]}(e^{-1}(x))=(g^{[\ell]})^{e}(x)$. 
Similarly, we can show that $(g^{[r]})^e\subseteq (g^e)^{[r]}$. 
\end{proof}

\subsection{The join}

 By \cite{GJ}, for every $n$, the variety $\mathsf{LP_n}$ of $n$-periodic $\ell$-pregroups is a subvariety of  $\mathsf{DLP}$. Here we prove that the join of all of the $\mathsf{LP_n}$'s is precisely the whole variety $\mathsf{DLP}$; in other words the class of periodic $\ell$-pregroups generates the variety of all distributive $\ell$-pregroups.

It is shown in \cite{GG} that  every equation in the language of $\ell$-pregroups is equivalent to one of the form $1\leq w_{1}\vee\ldots\vee w_{k}$
 where $m \in \mathbb{Z}^+$ and the $w_i$'s are \emph{intensional terms}, i.e.,  terms of the form $$x_{1}^{(m_1)}x_{2}^{(m_2)}\ldots x_{l}^{(m_l)},$$
 where $x_1,\ldots, x_l$ are not necessarily distinct variables, $l\in \mathbb{N}$, and $m_1, \ldots, m_l \in \mathbb{Z}$. Therefore, we may  consider equations in such \emph{intensional} form.

Recall that a failure of an equation $1\leq w_{1}\vee\ldots\vee w_{k}$ in $\m F(\m \Omega)$ consists of a homomorphism $\varphi: \m {Tm} \rightarrow \m F(\m \Omega)$, from the algebra $\m {Tm}$ of all terms in the language of $\ell$-pregroups, such that  $\varphi(1)(p) > \varphi( w_{1})(p), \ldots  , \varphi(w_{k})(p)$, for some $p \in \Omega$.

 The failure of an equation in a diagram is formulated relative to an \emph{intensional algebra}---an algebra over the language $(\cdot, 1, ^\ell, ^r)$---playing the role of $\m F(\m \Omega)$. 
 Given a c-chain $\m \Delta$, we define the algebra $\m {Pf}(\m \Delta)=(Pf(\m \Delta), {\circ}, ^{[\ell]}, ^{[r]}, i_\Delta)$, where $Pf(\m \Delta)$ is the set of all the order-preserving partial functions over $\m \Delta$, $\circ$ is  the composition of partial functions, $i_\Delta$ is the identity function on $\Delta$, and  $g \mapsto g^{[\ell]}$ and $g \mapsto g^{[r]}$ are the two inversion operations as defined on  $\m \Delta$.

The following lemma follows from the definition of counterparts and by iterations of Lemma~\ref{l: ellnperiodic}.
\begin{lemma}\label{l:e, composition and residuals}
If $\m \Delta$ is an c-chain and $g,f$ are partial functions on $\m \Delta$ that are $n$-periodic with respect to a spacing embedding $e$, then
\begin{enumerate}
    \item $(f\circ g)^e=f^{e}\circ g^{e}$ and 
    \item for all $m\in\mathbb{Z}$, $(g^{[m]})^e\subseteq (g^e)^{[m]}$.
\end{enumerate}
Moreover, for every $u\in \m{Ti}$ with $l$-many variables  and $g_1,\ldots,g_l$ partial functions on $\m \Delta$, $n$-periodic with respect to $e$, then $(u^{\m{Pf}(\m{\Delta})}(g_1,\ldots g_l))^e\subseteq u^{\m{Pf}(\mathbb{Z})}(g_1^e,,g_l^e)$.
\end{lemma}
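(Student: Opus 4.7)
My plan is to establish (1) directly from the definition of counterparts, to prove (2) by induction on $|m|$ using Lemma~\ref{l: ellnperiodic} both as the base case and as the tool that keeps us inside the class of $n$-periodic partial functions, and then to obtain the moreover part by structural induction on $u$, invoking (1) for products of subterms and (2) for iterated inverses. The only mildly subtle point is a monotonicity property of $^{[\ell]}$ with respect to inclusion of partial functions, which is what makes the induction in (2) propagate correctly.

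For (1), I would unfold the notation: since $g^e=e\circ g\circ e^{-1}$ and $f^e=e\circ f\circ e^{-1}$ as partial functions, and $e^{-1}\circ e$ acts as the identity on the image of $e$, a direct computation gives $(f\circ g)^e=e\circ f\circ g\circ e^{-1}=(e\circ f\circ e^{-1})\circ (e\circ g\circ e^{-1})=f^e\circ g^e$. A quick check shows that both sides have the same domain, namely those $x$ with $e^{-1}(x)\in Dom(g)$ and $g(e^{-1}(x))\in Dom(f)$.

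For (2), I would argue by induction on $|m|$. The case $m=0$ is trivial and the cases $m=\pm 1$ are exactly the content of Lemma~\ref{l: ellnperiodic}. For the inductive step with $m>0$, Lemma~\ref{l: ellnperiodic} applied to $g^{[m]}$---which is itself $n$-periodic with respect to $e$ by iterating the same lemma---gives $(g^{[m+1]})^e=((g^{[m]})^{[\ell]})^e\subseteq ((g^{[m]})^e)^{[\ell]}$. I then combine this with the following monotonicity observation, which is easily extracted from the characterization in Lemma~\ref{l: g^[ell] order preserving}: if $h_1\subseteq h_2$ are partial functions on $\mathbb{Z}$ and $h_1^{[\ell]}(x)=b$, then $b,b-1\in Dom(h_1)\subseteq Dom(h_2)$ and $h_1$ agrees with $h_2$ on these points, so $b$ also witnesses $h_2^{[\ell]}(x)=b$, yielding $h_1^{[\ell]}\subseteq h_2^{[\ell]}$. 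Plugging in the inductive hypothesis $(g^{[m]})^e\subseteq (g^e)^{[m]}$ gives $((g^{[m]})^e)^{[\ell]}\subseteq ((g^e)^{[m]})^{[\ell]}=(g^e)^{[m+1]}$, as required. The case $m<0$ proceeds symmetrically via $^{[r]}$.

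For the moreover part, I would use structural induction on the intensional term $u$, maintaining the additional invariant that $u^{\m{Pf}(\m{\Delta})}(g_1,\ldots,g_l)$ is itself $n$-periodic with respect to $e$ so that the inductive hypotheses remain applicable. The base cases (a variable $x_i$ or the constant $1$) are immediate; the product case uses (1) together with the observation that composition of partial functions that are $n$-periodic with respect to $e$ is again $n$-periodic, which follows at once by chaining the defining implication $x\leq y+kn\Rightarrow g^e(x)\leq g^e(y)+kn$; and the iterated-inverse case uses (2) together with the closure of $n$-periodicity under $^{[\ell]}$ and $^{[r]}$ from Lemma~\ref{l: ellnperiodic}. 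I do not anticipate any substantive obstacle: the whole argument is a bookkeeping exercise organized around the definition of counterparts and iterated applications of Lemma~\ref{l: ellnperiodic}.
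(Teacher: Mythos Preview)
Your proposal is correct and follows essentially the same route as the paper: (1) is unfolded from the definition, (2) is obtained by iterating Lemma~\ref{l: ellnperiodic}, and the moreover clause is handled by structural induction on $u$. The one place where you add genuine content is the monotonicity of $^{[\ell]}$ under inclusion of partial functions, which is exactly what is needed to propagate the inductive hypothesis in (2); the paper's proof simply asserts that (2) ``follows from Lemma~\ref{l: ellnperiodic}'' and leaves this step implicit, so your version is the more complete of the two.
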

\begin{proof}
    (1) is easy to see and (2) follows from Lemma~\ref{l: ellnperiodic}. We will prove the remaining inclusion by induction on the structure of terms. For the induction step, let  $u$ be a term of the form $u=x^{(m)}v$ where $m\in\mathbb{Z}$, 
    $v$ is a term in $\m{Ti}$ over variables $x_1,\ldots , x_l$,  and $x\in\{x_1,\ldots,x_l\}$. Also, let
    $g_1,\ldots,g_l$ be partial functions on $\m \Delta$ that are $n$-periodic with respect to the same spacing embedding $e$ such that $(v^{\m{Pf}(\m{\Delta})}(g_1,\ldots g_l))^e\subseteq v^{\m{Pf}(\mathbb{Z})}(g_1^e,\ldots ,g_l^e)$; we will write $g$ for the partial function corresponding to $x$. If $a\in Dom((u^{\m{Pf}(\m{\Delta})}(g_1,\ldots g_l))^e)$, then $a\in Dom((g^{(m)})^{e}(v^{\m{Pf}(\m{\Delta})}(g_1,\ldots g_l))^e)$ by (1), so $a\in Dom((v^{\m{Pf}(\m{\Delta})}(g_1,\ldots g_l))^e)$, in particular. So,   
    $e^{-1}a\in Dom(v^{\m{Pf}(\m{\Delta})}(g_1,\ldots g_l))$ and $v^{\m{Pf}(\m{\Delta})}(g_1,\ldots g_l)(e^{-1}a)\in Dom(g^{[m]})$. So, by (1), $(u^{\m{Pf}(\m{\Delta})}(g_1,\ldots g_l))^e(a)=(g^{[m]})^e(v^{\m{Pf}(\m{\Delta})}(g_1,\ldots g_l))^e(a)$ and by hypothesis $(g^{[m]})^e(v^{\m{Pf}(\m{\Delta})}(g_1,\ldots g_l))^e(a)=(g^{[m]})^e(v^{\m{Pf}(\mathbb{Z})}(g_1^e,\ldots g_l^e))(a)$.  By (2), $(g^{[m]})^e(v^{\m{Pf}(\mathbb{Z}}(g_1^e,\ldots g_l^e))(a)=(g^e)^{[m]}(v^{\m{Pf}(\mathbb{Z})}(g_1^e,\ldots g_l^e))(a)$, therefore, $(u^{\m{Pf}(\m{\Delta})}(g_1,\ldots g_l))^e(a)
    =u^{\m{Pf}(\mathbb{Z})}(g_1^e,\ldots,g_l^e)(a)$.
\end{proof}
For $n\in \mathbb{Z}^+$, a diagram is called \emph{$n$-periodic} with respect to some spacing embedding if all of its partial functions are $n$-periodic with with respect to that embedding. A diagram is called \emph{$n$-periodic} if it is  $n$-periodic with respect to some spacing embedding.

We say that the equation $1\leq w_{1}\vee\ldots\vee w_{k}$ in intensional form over variables $x_1, \ldots, x_l$  \emph{fails} in a diagram  $(\m \Delta, f_{1},\ldots,f_{l})$ if there is an intensional homomorphism $\varphi: \m {Ti} \rightarrow \m {Pf}(\m \Delta)$, from the algebra $\m {Ti}$ of all intensional terms, and a point $p \in \Delta$ such that   $\varphi(1)(p) > \varphi( w_{1})(p), \ldots  , \varphi(w_{k})(p)$ and $\varphi(x_i)=f_i$ for all $1 \leq i\leq l$.

In the following, we denote the lenght of an equation $\varepsilon$ by $|\varepsilon|$.

\begin{lemma} \label{l: diag2nperdiag}
If an equation $\varepsilon$ fails in a diagram, then it fails in an $n$-periodic diagram, where $n=2^{|\varepsilon|}|\varepsilon|^4$. 
\end{lemma}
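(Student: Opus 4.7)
My plan is to exhibit the required $n$-periodic diagram as a small restriction of the given failing diagram, spatially embedded into $\mathbb{Z}$ so tightly that the $n$-periodicity condition holds essentially by pigeonhole. Suppose $\varepsilon = (1 \leq w_1 \vee \cdots \vee w_k)$ fails in $(\m\Delta, f_1,\ldots, f_l)$ at a point $p$ via a homomorphism $\varphi$, where each intensional term $w_j$ equals $x_{i_{j,1}}^{(m_{j,1})}\cdots x_{i_{j,s_j}}^{(m_{j,s_j})}$ and all of $k$, $s_j$, and $|m_{j,t}|$ are bounded by $|\varepsilon|$.

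The first step is to collect a finite set $T\subseteq\Delta$ containing every point consulted during the evaluation of each $\varphi(w_j)(p)$: the successive outputs $p_1 = f_{i_{j,s_j}}^{[m_{j,s_j}]}(p),\, p_2 = f_{i_{j,s_j-1}}^{[m_{j,s_j-1}]}(p_1),\ldots, p_{s_j} = \varphi(w_j)(p)$, and, for every iterated-inverse step, the cover-pair witnesses together with the images they demand, as dictated by the recursive clause of Lemma~\ref{l: g^[ell] order preserving}. An induction on $|m|$ shows that certifying a single value $f^{[m]}(q)$ builds a binary tree of cover pairs of depth $|m|$ and uses at most $O(2^{|m|})$ points. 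Summing over the $k$ disjuncts, the $s_j$ factors of each, and the exponents, all bounded by $|\varepsilon|$, one absorbs constants and obtains $|T|\leq 2^{|\varepsilon|}|\varepsilon|^4 = n$.

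Next, let $\m\Delta'$ be $\m\Delta$ restricted to $T$ with the inherited order and inherited covers, and set $f_i' := f_i|_T$. Because $T$ contains every witness of every iterated inverse used in the original evaluation, defining $\varphi'(x_i) := f_i'$ makes the computations replay verbatim: $\varphi'(w_j)(p) = \varphi(w_j)(p) < p$ for each $j$, so $\varepsilon$ still fails in $(\m\Delta', f_1', \ldots, f_l')$ at $p$. Now enumerate $T = \{t_0 < t_1 < \cdots < t_{|T|-1}\}$ and set $e(t_i) := i$; since consecutive integers are always covers in $\mathbb{Z}$, every cover of $\m\Delta'$ is preserved by $e$, making $e$ a spacing embedding of height $|T| - 1 \leq n - 1$. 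The counterpart $(f_i')^e$ then has both domain and range inside $[0, n)_\mathbb{Z}$, so $(f_i')^e(x) - (f_i')^e(y) \in (-n, n)$ for all $x, y \in Dom((f_i')^e)$, and the $n$-periodicity condition $x \leq y + kn \Rightarrow (f_i')^e(x) \leq (f_i')^e(y) + kn$ becomes automatic: for $k \geq 1$ it follows from the range bound, for $k = 0$ it is order preservation, and for $k \leq -1$ the hypothesis cannot be satisfied by $x, y \in [0, n)_\mathbb{Z}$. Thus $(\m\Delta', f_1', \ldots, f_l')$ is $n$-periodic with respect to $e$ and witnesses the failure of $\varepsilon$.

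The principal obstacle will be the bookkeeping in the first step: precisely defining the witness set for each $f^{[m]}(q)$, verifying the $O(2^{|m|})$ recursion, and carefully tracking constants across disjuncts, factors, and nested inverse levels so that everything fits inside the explicit target $2^{|\varepsilon|}|\varepsilon|^4$. Once this count is nailed down, the remaining ingredients---fidelity of the restriction to $T$, construction of the consecutive spacing embedding, and the $n$-periodicity verification---reduce to formalities.
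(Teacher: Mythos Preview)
Your approach is correct and essentially identical to the paper's. The paper simply cites Corollary~4.4 of \cite{GG} to obtain a failing diagram on a c-chain of size at most $n=2^{|\varepsilon|}|\varepsilon|^4$---this is precisely the witness-set bound you identify as the ``principal obstacle'' and sketch via the binary-tree recursion---and then argues $n$-periodicity by observing that any diagram on $\mathbb{N}_d$ with $d\leq n$ is vacuously $n$-periodic, which is exactly your case analysis on $k$ phrased via the extension characterization of Lemma~\ref{l:extention of counterparts} rather than the raw definition.
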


\begin{proof}
In the proof of Corollary~4.4 of \cite{GG} it is shown that if an equation $\varepsilon$ fails in a diagram, then it also fails in a diagram where the c-chain has size $n=2^{|\varepsilon|}|\varepsilon|^4$.  

Note that every diagram based on a c-chain $\m \Delta$ is $n$-periodic for $n\geq |\Delta|$, as follows.  For every partial function $g$ of the diagram, we consider the (unique) spacing embedding $e: \Delta \rightarrow \mathbb{Z}$ where $e[\Delta]=\mathbb{N}_d$ and $d=|\Delta|$. Then the partial function $g^e$ is defined on the convex subset $\mathbb{N}_d$ of $\mathbb{Z}$, so it can be extended to a partial function with domain $\mathbb{N}_n$ (in any order-preserving way) since $d \leq n$. This order-preserving partial function is vacuously $n$-periodic, so it can be further extended periodically to an $n$-periodic function on  $\mathbb{Z}$ by Lemma~\ref{l:extention of counterparts}. Thus, every partial function of the diagram is $n$-periodic, hence the equation fails in an $n$-periodic diagram.
\end{proof}

The following lemma will help us show that if an equation $\varepsilon$ fails in a distributive $\ell$-pregroup, then it fails in a periodic one.

\begin{lemma} \label{l: nperdiagram2FnZ}
If an equation fails in an $n$-periodic diagram, then it fails in $\m F_n(\mathbb{Z})$.
\end{lemma}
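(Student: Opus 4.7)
The plan is to lift the failure from the $n$-periodic diagram to $\m F_n(\mathbb{Z})$ by extending the partial functions (transported via the spacing embedding witnessing $n$-periodicity) to total $n$-periodic functions on $\mathbb{Z}$ and checking that term values are preserved up to extension. Fix a failing equation in intensional form $1\leq w_{1}\jn\cdots\jn w_{k}$ over variables $x_1,\ldots,x_l$, an $n$-periodic diagram $(\m{\Delta},f_1,\ldots,f_l)$ with spacing embedding $e:\m{\Delta}\to\mathbb{Z}$ witnessing the $n$-periodicity of each $f_i$, a homomorphism $\varphi:\m{Ti}\to\m{Pf}(\m{\Delta})$ with $\varphi(x_i)=f_i$, and a point $p\in\Delta$ with $\varphi(1)(p)>\varphi(w_j)(p)$ for every $j$.

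First I would apply Lemma~\ref{l:extention of counterparts} to each $n$-periodic partial function $f_i^e$ on $\mathbb{Z}$ to obtain an extension $\hat{f}_i\in F_n(\mathbb{Z})$, and then define the homomorphism $\hat{\varphi}:\m{Ti}\to\m F_n(\mathbb{Z})$ by $\hat{\varphi}(x_i):=\hat{f}_i$. The central claim is that for every intensional term $u$, the total function $\hat{\varphi}(u)$ extends the partial function $(\varphi(u))^e$ on $\mathbb{Z}$. Lemma~\ref{l:e, composition and residuals} already supplies the inclusion $(\varphi(u))^e\subseteq u^{\m{Pf}(\mathbb{Z})}(f_1^e,\ldots,f_l^e)$, so it suffices to prove
\[
u^{\m{Pf}(\mathbb{Z})}(f_1^e,\ldots,f_l^e)\subseteq u^{\m{Pf}(\mathbb{Z})}(\hat{f}_1,\ldots,\hat{f}_l)=\hat{\varphi}(u).
\]

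For the inclusion I would proceed by induction on the structure of $u$. Composition of partial functions is monotone with respect to extension, so the multiplicative case is immediate. For the inverse operations, I observe that if $g\subseteq g'$ are partial functions on the common c-chain $(\mathbb{Z},\leq,\prec)$, then $g^{[\ell]}\subseteq (g')^{[\ell]}$ (and similarly for $^{[r]}$) directly from the definition, since the condition $g(b-1)<x\leq g(b)$ uses only the values at $b-1,b$, which agree in $g$ and $g'$ when defined. The final equality $u^{\m{Pf}(\mathbb{Z})}(\hat{f}_1,\ldots,\hat{f}_l)=u^{\m F_n(\mathbb{Z})}(\hat{f}_1,\ldots,\hat{f}_l)$ then rests on the observation that for a total $f\in F(\mathbb{Z})$, $f^{[\ell]}$ coincides with $f^{\ell}$: the condition $f(b-1)<x\leq f(b)$ is precisely the characterization of $b=\min\{c:x\leq f(c)\}$ in $\mathbb{Z}$, and analogously $f^{[r]}=f^{r}$. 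Since $F_n(\mathbb{Z})$ is closed under the $\ell$-pregroup operations, every intermediate term value is total and in $F_n(\mathbb{Z})$, so this identification applies at each inductive step.

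To conclude, note that $\varphi(1)=i_\Delta$, so $\varphi(1)(p)=p$ and $\hat{\varphi}(1)(e(p))=e(p)$. Setting $q_j:=\varphi(w_j)(p)$, the inclusion $(\varphi(w_j))^e\subseteq \hat{\varphi}(w_j)$ yields $\hat{\varphi}(w_j)(e(p))=e(q_j)$. From $p>q_j$ in $\Delta$ and the order-preservation of $e$ we obtain $\hat{\varphi}(1)(e(p))=e(p)>e(q_j)=\hat{\varphi}(w_j)(e(p))$ for every $j$, witnessing the failure of the equation in $\m F_n(\mathbb{Z})$ at $e(p)$. The main obstacle is the bookkeeping between the partial-function operation $^{[\ell]}$ and the total-function operation $^{\ell}$ when passing from $\m{Pf}(\mathbb{Z})$ to $\m F_n(\mathbb{Z})$; once the identity $f^{[\ell]}=f^{\ell}$ for total $f\in F(\mathbb{Z})$ is in hand, the induction is routine.
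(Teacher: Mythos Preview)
Your proof is correct and follows essentially the same strategy as the paper: transport the partial functions via the spacing embedding, extend them to elements of $F_n(\mathbb{Z})$ using Lemma~\ref{l:extention of counterparts}, and verify that term values agree so the failure persists. The only difference is presentational: where the paper invokes Theorem~4.1 of \cite{GG} as a black box to obtain $\psi(u)(e(p))=\hat{\varphi}(u)(e(p))$ for $u\in FS$, you instead unpack that step directly by proving monotonicity of $g\mapsto g^{[\ell]}$, $g\mapsto g^{[r]}$ under extension of partial functions on $(\mathbb{Z},\leq,\prec)$ and identifying $f^{[\ell]}=f^{\ell}$ for total $f\in F(\mathbb{Z})$. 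Your version is therefore more self-contained, at the cost of reproving what \cite{GG} already supplies; the underlying argument is the same.
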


\begin{proof}
 Suppose that an equation $1\leq w_{1}\vee\ldots\vee w_{t}$, in intensional form, fails in a diagram
 $(\m \Delta, g_{1},\ldots,g_{l})$ that is $n$-periodic with respect to a spacing embedding $e: \m \Delta \rightarrow (\mathbb{Z}, \leq_{\mathbb{Z}},\diagcov_{\mathbb{Z}})$, where $\min (e[\Delta])=0$. This means that there exist $p\in \Delta$ and an intentional homomorphism $\varphi: \m {Ti} \rightarrow \m {Pf}(\m \Delta)$  where $\varphi(x_i)=g_i$ for all $1\leq i\leq l$, satisfying $\varphi(1)(p) > \varphi( w_{1})(p), \ldots  , \varphi(w_{t})(p)$. For each $i\in \{1,\ldots,l\}$, by Lemma~\ref{l:extention of counterparts} the counterpart of each $g_i$ by $e$ can be extended to a function $f_i\in F_n(\mathbb{Z})$.

 
 We extend the assignment  $\hat{\varphi}(x_i)=g^e_i$ to an 
 intentional homomorphism $\hat{\varphi}: \m {Ti} \rightarrow \m {Pf}(e[\m \Delta])$; by Lemma~\ref{l:e, composition and residuals} have $\varphi(u)^e e(p)=\hat{\varphi}(u)e(p)$, for all $u\in FS$. It follows from the last paragraphs of Theorem~4.1 in \cite{GG} 
 that there is a homomorphism ${\psi}:\m {Ti} \rightarrow \m F(\mathbb{Z})$ extending the assignment $\psi(x_i)=f_i$ and satisfying $\psi (u)(e(p))=\hat{\varphi}(u)(e(p))$ for all $u\in FS$. Since the extensions $f_1,\ldots,f_l$ are in $F_n(\mathbb{Z})$, the range of $\psi$ is in $\m {F}_n(\mathbb{Z})$. Also $\varepsilon$ fails in $\m F_n(\mathbb{Z})$ since for all $i\in \{1,\ldots, k\}$, we have $\psi(w_i)(e(p))=e\varphi(w_i)(p)<\psi(1)(e(p))=e\varphi(1)(p)$. 
\end{proof}


\begin{theorem} \label{t: DLP FnZ}
An equation $\varepsilon$ fails in $\mathsf{DLP}$ iff it fails in $\m F_n(\mathbb{Z})$, where $n=2^{|\varepsilon|}|\varepsilon|^4$.
\end{theorem}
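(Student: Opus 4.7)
The plan is to chain together the machinery built in this section, noting that the backward direction is essentially immediate while the forward direction requires the bound on $n$.

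For the easy backward direction, I would simply observe that $\m F_n(\mathbb{Z})$ is a subalgebra of $\m F(\mathbb{Z})$ by Lemma~\ref{l: F_N(Omega)}, and $\m F(\mathbb{Z})$ is a distributive $\ell$-pregroup (as noted in Section~\ref{s: F(Omega)}), hence lies in $\mathsf{DLP}$. So any failure in $\m F_n(\mathbb{Z})$ is automatically a failure in $\mathsf{DLP}$.

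For the forward direction, suppose $\varepsilon$ fails in $\mathsf{DLP}$. I would first put $\varepsilon$ in the intensional form $1 \leq w_1 \vee \cdots \vee w_k$ (as recalled from \cite{GG}), noting this does not change the length bound in a way that matters. Since $\mathsf{DLP}$ is generated by the algebras $\m F(\m \Omega)$ for integral chains $\m \Omega$ (the improved representation of \cite{GG}), the equation fails in some such $\m F(\m \Omega)$. Then I would invoke the diagram extraction argument from \cite{GG} (explicitly referenced in the proof of Lemma~\ref{l: diag2nperdiag}) to obtain a failure of $\varepsilon$ in a diagram. Applying Lemma~\ref{l: diag2nperdiag} produces a failure in an $n$-periodic diagram with $n = 2^{|\varepsilon|}|\varepsilon|^4$. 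Finally, Lemma~\ref{l: nperdiagram2FnZ} transports this failure from the $n$-periodic diagram to $\m F_n(\mathbb{Z})$, completing the chain.

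Since each step is already encapsulated in the lemmas, no substantive obstacle remains; the proof is essentially an assembly. The only delicate point is making sure the same $n$ works throughout, which it does because the value $n = 2^{|\varepsilon|}|\varepsilon|^4$ is fixed in Lemma~\ref{l: diag2nperdiag} and then carried verbatim through Lemma~\ref{l: nperdiagram2FnZ}, which is stated for an arbitrary $n$ and merely preserves the periodicity parameter. Thus the whole argument can be written in a few lines: convert $\varepsilon$ to intensional form, extract a failing diagram via \cite{GG}, apply Lemma~\ref{l: diag2nperdiag} to make it $n$-periodic with the stated $n$, and apply Lemma~\ref{l: nperdiagram2FnZ} to land in $\m F_n(\mathbb{Z})$; for the converse, use that $\m F_n(\mathbb{Z}) \leq \m F(\mathbb{Z}) \in \mathsf{DLP}$.
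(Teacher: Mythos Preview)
Your proposal is correct and follows essentially the same route as the paper: cite \cite{GG} to pass from a failure in $\mathsf{DLP}$ to a failure in a diagram, then apply Lemma~\ref{l: diag2nperdiag} and Lemma~\ref{l: nperdiagram2FnZ} in sequence, with the converse handled by $\m F_n(\mathbb{Z}) \in \mathsf{DLP}$. The extra details you mention (intensional form, the representation via $\m F(\m\Omega)$) are merely unpacking what the paper leaves implicit in its one-line citation of \cite{GG}.
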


\begin{proof}
If an equation fails in $\mathsf{DLP}$, then by \cite{GG} it fails in a diagram. By Lemma~\ref{l: diag2nperdiag}, the equation fails in an $n$-periodic diagram, where $n=2^{|\varepsilon|}|\varepsilon|^4$. 
 By Lemma~\ref{l: nperdiagram2FnZ}, the equation fails in $\m F_n(\mathbb{Z})$. On the other hand, if $\varepsilon$ fails in $\m F_n(\mathbb{Z})$, then $\varepsilon$ fails in $\mathsf{DLP}$, since $\m F_n(\mathbb{Z}) \in \mathsf{DLP}$.
\end{proof}

This reduces the decidability of the equational theory of $\mathsf{DLP}$ to the decidability of all of the $\m F_n(\mathbb{Z})$'s: given an equation to check, we know which $\m F_n(\mathbb{Z})$ to test it in. In Section~\ref{s: F_nZ} we will prove that $\m F_n(\mathbb{Z})$ is decidable, for each $n$.

 The following theorem shows that $\mathsf{DLP}$ can be expressed as a join in two different ways. 

\begin{theorem} \label{t: joinnper}
$\mathsf{DLP}=\bigvee \mathsf{LP_n}=\bigvee \mathsf{V}(\{\m F_n(\mathbb{Z})\})=\mathsf{V}(\{\m F_n(\mathbb{Z}):n\in \mathbb{Z}^+\})$.
\end{theorem}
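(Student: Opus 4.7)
The plan is to establish the fourfold equality by a short chain of inclusions, with Theorem~\ref{t: DLP FnZ} doing essentially all of the real work. The containments $\mathsf{V}(\{\m F_n(\mathbb{Z}):n\in \mathbb{Z}^+\}) = \bigvee \mathsf{V}(\{\m F_n(\mathbb{Z})\}) \subseteq \bigvee \mathsf{LP_n} \subseteq \mathsf{DLP}$ are the easy direction. The first equality is a general fact of universal algebra: the variety generated by any class equals the join of the varieties generated by each of its members, since both sides coincide with the least variety containing every $\m F_n(\mathbb{Z})$. The middle inclusion is immediate from $\m F_n(\mathbb{Z}) \in \mathsf{LP_n}$, which is Lemma~\ref{l: F_N(Omega)}. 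The final inclusion is the already-quoted fact from \cite{GJ} that every periodic $\ell$-pregroup is distributive, giving $\mathsf{LP_n}\subseteq \mathsf{DLP}$ for every $n$ and hence the join is contained in $\mathsf{DLP}$.

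To close the loop, I would show the single non-trivial inclusion $\mathsf{DLP} \subseteq \mathsf{V}(\{\m F_n(\mathbb{Z}):n\in \mathbb{Z}^+\})$ by arguing equationally. Let $\varepsilon$ be any equation in the language of $\ell$-pregroups that holds in every $\m F_n(\mathbb{Z})$; it suffices to verify that $\varepsilon$ holds in $\mathsf{DLP}$, for then $\mathsf{V}(\{\m F_n(\mathbb{Z}):n\in \mathbb{Z}^+\})$ and $\mathsf{DLP}$ satisfy the same equations, hence coincide as varieties. Setting $n_0 := 2^{|\varepsilon|}|\varepsilon|^4$, the assumption gives that $\varepsilon$ holds in the particular algebra $\m F_{n_0}(\mathbb{Z})$. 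By the contrapositive of Theorem~\ref{t: DLP FnZ}, if $\varepsilon$ held in this specific $\m F_{n_0}(\mathbb{Z})$ but failed in $\mathsf{DLP}$, we would reach a contradiction; so $\varepsilon$ holds in $\mathsf{DLP}$, as required.

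Combining the easy chain with this final containment yields a circular chain of inclusions passing through all four expressions, which forces them all to be equal. The expected main obstacle has already been cleared in the preceding results: the diagram-theoretic reduction from a failure in $\mathsf{DLP}$ to a failure in an $n$-periodic diagram (Lemma~\ref{l: diag2nperdiag}) together with the realization of that failure inside $\m F_n(\mathbb{Z})$ (Lemma~\ref{l: nperdiagram2FnZ}) is precisely what is packaged into Theorem~\ref{t: DLP FnZ}, so at this point the present theorem is essentially formal bookkeeping on top of that machinery.
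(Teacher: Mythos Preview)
Your proof is correct and follows essentially the same approach as the paper: both establish the circular chain of inclusions using the general fact that $\mathsf{V}(\{\m F_n(\mathbb{Z}):n\in \mathbb{Z}^+\})=\bigvee \mathsf{V}(\{\m F_n(\mathbb{Z})\})$, the containments $\m F_n(\mathbb{Z})\in \mathsf{LP_n}$ and $\mathsf{LP_n}\subseteq \mathsf{DLP}$, and Theorem~\ref{t: DLP FnZ} for the non-trivial inclusion $\mathsf{DLP}\subseteq \mathsf{V}(\{\m F_n(\mathbb{Z}):n\in \mathbb{Z}^+\})$. Your equational phrasing of the last step is just the contrapositive of how the paper invokes the same theorem.
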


\begin{proof}
By Lemma~\ref{t: DLP FnZ}, we have $\mathsf{DLP} \subseteq \mathsf{V}(\{\m F_n(\mathbb{Z}):n\in \mathbb{Z}^+\})$. Furthermore, by general facts we have $ \mathsf{V}(\{\m F_n(\mathbb{Z}):n\in \mathbb{Z}^+\})=\bigvee \mathsf{V}(\{\m F_n(\mathbb{Z})\})$. Since each algebra $\m F_n(\mathbb{Z})$ is $n$-periodic, we get $\mathsf{V}(\{\m F_n(\mathbb{Z})\}) \subseteq \mathsf{LP_n}$, so $\bigvee \mathsf{V}(\{\m F_n(\mathbb{Z})\}) \subseteq \bigvee \mathsf{LP_n}$. Finally, by \cite{GJ}, for every $n$,  $\mathsf{LP_n} \subseteq \mathsf{DLP}$, so $\bigvee \mathsf{LP_n} \subseteq \mathsf{DLP}$.
\end{proof}

This complements nicely the result of \cite{GG} that $\mathsf{DLP}$ is generated by $\m F(\mathbb{Z})$. 

\subsection{From the join down to the varieties?}

 It follows from Theorem~\ref{t: joinnper} that $\bigvee \mathsf{LP_n}=\bigvee \mathsf{V}(\m F_n(\mathbb{Z}))$. For each $n$, we want to find a generating algebra for $\mathsf{LP_n}$, so it is tempting to conjecture that $\mathsf{LP_n}=\mathsf{V}(\m F_n(\mathbb{Z}))$, for all (or at least some) $n$. 
This fails for  $n=1$, since $\mathsf{LP}_1$ is the variety of $\ell$-groups while $\m F_1(\mathbb{Z})$ is isomorphic the $\ell$-pregroup on the integers (it consists of only the translations on the integers) and 
 $\mathsf{V}(\m F_1(\mathbb{Z}))$ is the variety of abelian $\ell$-groups. Corollary~\ref{c: notFnZ} below shows that this actually fails for every single $n$, i.e.,  for all $n$, the algebra $\m F_n(\mathbb{Z})$ generates a proper subvariety of $\mathsf{LP_n}$. 

 In other words, it will follow that if an equation fails in a distributive $\ell$-pregroup, then  it fails in $\mathsf{LP_n}$ and it fails in $\mathsf{V}(\m F_m(\mathbb{Z}))$ for some  $m,n$ (which can be then taken to be minimal such), and also that $n \leq m$ (since $\mathsf{V}(\m F_k(\mathbb{Z}))\subseteq \mathsf{LP}_k$, for all $k$), but we necessarily have  $n<m$.

 \medskip
 
 The combination of $\mathsf{V}(\m F_n(\mathbb{Z})) \not = \mathsf{LP_n}$, for all $n$, and $\bigvee \mathsf{V}(\m F_n(\mathbb{Z}))= \bigvee \mathsf{LP_n}$ is quite interesting and surprising. This means that for each $n$, the algebra $\m F_n(\mathbb{Z})$ satisfies a special equation, but there is no special equation that is satisfied by all of the $\m F_n(\mathbb{Z})$'s. We prove that $\mathsf{V}(\m F_n(\mathbb{Z})) \not = \mathsf{LP_n}$, for each $n$, by providing such an equation tailored to each $n$. 

 The equation states that all invertible elements commute. Clearly, the invertible elements in an $\ell$-pregroup are the $1$-periodic elements and, by Lemma~\ref{l: F_N(Omega)}, they form a subalgebra, which is therefore an $\ell$-group; this is the maximal subalgebra that is an $\ell$-group and we call it the the \emph{maximal $\ell$-subgroup} of the $\ell$-pregroup. Therefore, in an $\ell$-pregroup the inverible elements commute  iff its maximal $\ell$-subgroup is abelian. It is easy to see that an element $g$ is invertible iff $g^\ell = g^r$. Therefore,  the the property that all invertibles commute can be captured by the quasiequation 
 $$(x^\ell= x^r\; \& \;y^\ell= y^r) \Rightarrow xy=yx.$$
 The lemma below entails that this quasiequation is not equivalent to any equation. However, when restricted to $n$-periodic $\ell$-pregroups, for a fixed $n$, this can be captured by the equation 
$$(x \jn x^{\ell \ell} \jn \cdots \jn  x^{\ell^{2n-2}})(y \jn y^{\ell \ell} \jn \cdots \jn  y^{\ell^{2n-2}})=(y  \jn \cdots \jn  y^{\ell^{2n-2}})(x  \jn \cdots \jn  x^{\ell^{2n-2}})$$
For convenience, for all $n$, we define the term $i_n(x):=x \jn x^{\ell \ell} \jn \cdots \jn  x^{\ell^{2n-2}}$.

\begin{lemma}\label{l: invnper}
For every $n$, and $n$-periodic $\ell$-pregroup $\m A$, the following hold.
\begin{enumerate}
    \item The invertible elements  of the algebra $\m A$ are exactly the ones of the form  $i_n(a)=a \jn a^{\ell \ell} \jn \cdots \jn  a^{\ell^{2n-2}}$, where $a \in A$.
    \item The invertible elements of the algebra $\m A$ commute iff $\m A$ satisfies the equation $i_n(x)i_n(y)=i_n(y)i_n(x)$.
    \item The equation $i_n(x)i_n(y)=i_n(y)i_n(x)$ holds in $\m F_n(\mathbb{Z})$ but fails in $\mathsf{LP_n}$. 
\end{enumerate}
\end{lemma}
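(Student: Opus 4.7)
The plan is to treat $i_n$ as a kind of projection onto the invertible (i.e., $1$-periodic) part of an $n$-periodic $\ell$-pregroup, deduce (1) directly, derive (2) as a formal consequence of (1), and then handle (3) by combining the known structure of invertibles in $\m F_n(\mathbb{Z})$ (from the proof of Lemma~\ref{l: simple}) with a non-abelian $\ell$-group witness.

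First, for (1), I would show that each $i_n(a)$ is $1$-periodic, hence invertible. Since $x \mapsto x^{\ell\ell}$ is an $\ell$-pregroup automorphism (noted in the proof of Lemma~\ref{l: F_N(Omega)}), it distributes over finite joins, and using the $n$-periodicity identity $a^{\ell^{2n}} = a$ I would compute
\[
(i_n(a))^{\ell\ell} = a^{\ell\ell} \jn a^{\ell^4} \jn \cdots \jn a^{\ell^{2n-2}} \jn a^{\ell^{2n}} = a \jn a^{\ell\ell} \jn \cdots \jn a^{\ell^{2n-2}} = i_n(a),
\]
so $(i_n(a))^\ell = (i_n(a))^r$ and $i_n(a)$ is invertible. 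Conversely, any invertible $g$ satisfies $g^\ell = g^r$, and then involutivity gives $g^{\ell\ell} = g^{r\ell} = g$; iterating, $g^{\ell^{2k}} = g$ for all $k$, so $i_n(g) = g$, showing every invertible lies in the image of $i_n$.

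Part (2) then follows formally. The forward direction is immediate, since by (1) the elements $i_n(x)$ and $i_n(y)$ are invertible. For the backward direction, any two invertibles $g,h$ satisfy $g = i_n(g)$ and $h = i_n(h)$, so $gh = i_n(g)i_n(h) = i_n(h)i_n(g) = hg$. For (3), the positive half would use the observation in the proof of Lemma~\ref{l: simple} that the invertibles of $\m F_n(\mathbb{Z})$ are exactly the translations $t_k \colon x \mapsto x-k$; these commute since $t_k \circ t_m = t_{k+m} = t_m \circ t_k$, so by (2) the equation holds in $\m F_n(\mathbb{Z})$. For the negative half, I would note that any $\ell$-group $\m G$ is $n$-periodic for every $n$: from $x^\ell = x^r$ one gets $x^{\ell\ell} = x^{r\ell} = x$ and $x^{rr} = x^{\ell r} = x$ by involutivity, so $x^{\ell^n} = x^{r^n}$ trivially for all $n$, i.e. $\mathsf{LP_1} \subseteq \mathsf{LP_n}$. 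Taking any non-abelian $\ell$-group $\m G$ (which exists), every element of $\m G$ is invertible but some pair fails to commute, so by (2) the equation $i_n(x)i_n(y) = i_n(y)i_n(x)$ fails in $\m G \in \mathsf{LP_n}$.

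No step is a serious obstacle; the mildly subtle point is the use of join-distributivity for the automorphism $x \mapsto x^{\ell\ell}$ in computing $(i_n(a))^{\ell\ell}$, but this is immediate from the fact that any $\ell$-pregroup endomorphism preserves joins. Everything else is a direct consequence of involutivity, $n$-periodicity, and the translation description of invertibles in $\m F_n(\mathbb{Z})$.
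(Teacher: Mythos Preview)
Your proposal is correct and follows essentially the same approach as the paper's own proof: both establish (1) via the computation $(i_n(a))^{\ell\ell}=i_n(a)$ using $n$-periodicity and the fact that $x\mapsto x^{\ell\ell}$ is an automorphism, and then obtain the converse from $i_n(g)=g$ for invertible $g$; both derive (2) formally from (1); and both settle (3) by noting that the invertibles of $\m F_n(\mathbb{Z})$ are the commuting translations while any non-abelian $\ell$-group (the paper uses $\m F_1(\mathbb{Q})$) lies in $\mathsf{LP_n}$ and violates the equation.
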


\begin{proof}
(1) Since $\m A$ is $n$-periodic, it satisfies $x^{\ell^{2n}}=x$, so for all $a \in A$, we have
$(a \jn a^{\ell \ell} \jn \cdots \jn  a^{\ell^{2n-4}} \jn  a^{\ell^{2n-2}})^{\ell\ell}=a^{\ell\ell} \jn a^{\ell^4} \jn \cdots \jn  a^{\ell^{2n-2}} \jn  a^{\ell^{2n}}
=a^{\ell\ell} \jn a^{\ell^4} \jn \cdots \jn  a^{\ell^{2n-2}} \jn  a$. Therefore, $a \jn a^{\ell \ell} \jn \cdots \jn  a^{\ell^{2n}}$ is invertible. Conversely, if $a$ is invertible, then $a \jn a^{\ell \ell} \jn \cdots \jn  a^{\ell^{2n}}=a$, so it is of this form. (2) follows from (1).

(3) The only invertible elements of $\m F_n(\mathbb{Z})$ are the translations, so they commute with each other; thus $\m F_n(\mathbb{Z})$ satisfies the equation. However, there exist non-commutative $\ell$-groups, such as the group $\m F_1(\mathbb{Q})$ of all order-preserving permutations on $\mathbb{Q}$. All $\ell$-pregroups are $1$-periodic, hence also $n$-periodic. Thus, $\m F_1(\mathbb{Q})$ is in $\mathsf{LP_n}$, but not all of its invertible elements commute, since all of its elements are invertible.
\end{proof}


Actually, even though by the following corollary $\m F_n(\mathbb{Z})$ fails to generate $\mathsf{LP_n}$, our second representation theorem (Theorem~\ref{t: repnper}) that every $n$-periodic $\ell$-pregroup can be embedded into a wreath product $\m H \wr \m F_n(\mathbb{Z})$, where $\m H$ is an $\ell$-group, showcases the importance of the algebra $\m F_n(\mathbb{Z})$ in studying $\mathsf{LP_n}$. In that respect, Lemma~\ref{l: nperdiagram2FnZ} (which shows that failures in $n$-periodic diagrams can be embedded in $\m F_n(\mathbb{Z})$) will turn out to be very useful in the next sections.

\begin{corollary} \label{c: notFnZ}
$\mathsf{LP_n}\not =\mathsf{V}(\m F_n(\mathbb{Z}))$, for all $n$.
\end{corollary}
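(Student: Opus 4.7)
The corollary is essentially an immediate consequence of Lemma~\ref{l: invnper}(3), so the plan is to simply observe that the equation constructed there provides a concrete separation between the two varieties. Specifically, fix $n \in \mathbb{Z}^+$ and consider the equation
\[
\varepsilon_n: \quad i_n(x)\, i_n(y) = i_n(y)\, i_n(x),
\]
where $i_n(x) = x \jn x^{\ell\ell} \jn \cdots \jn x^{\ell^{2n-2}}$.

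The plan is to run the varietal argument in two steps. First, by Lemma~\ref{l: invnper}(3), the algebra $\m F_n(\mathbb{Z})$ satisfies $\varepsilon_n$, and since varieties are closed under taking equational consequences, every algebra in $\mathsf{V}(\m F_n(\mathbb{Z}))$ also satisfies $\varepsilon_n$. Second, again by Lemma~\ref{l: invnper}(3), the $\ell$-group $\m F_1(\mathbb{Q})$ lies in $\mathsf{LP_n}$ (because every $\ell$-group is $1$-periodic, hence $n$-periodic) but $\varepsilon_n$ fails in $\m F_1(\mathbb{Q})$ since all of its elements are invertible and not all of them commute. Therefore $\mathsf{LP_n}$ does not satisfy $\varepsilon_n$, while $\mathsf{V}(\m F_n(\mathbb{Z}))$ does, yielding the strict inclusion $\mathsf{V}(\m F_n(\mathbb{Z})) \subsetneq \mathsf{LP_n}$.

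There is no real obstacle here; the entire content has been packaged into Lemma~\ref{l: invnper}. The only thing the corollary does is phrase the conclusion at the level of varieties rather than at the level of a single equation, which just amounts to invoking the standard fact that $\mathsf{V}(\m A)$ satisfies every equation that $\m A$ satisfies. In fact, the real conceptual work was done earlier: identifying the correct term $i_n(x)$ that isolates the invertible elements within an $n$-periodic $\ell$-pregroup (part (1) of the lemma) and recognizing that in $\m F_n(\mathbb{Z})$ the invertibles are precisely the translations and thus commute. Once that is in place, the corollary is a one-line consequence.
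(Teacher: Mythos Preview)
Your proposal is correct and is exactly the argument the paper intends: the corollary is stated without proof precisely because Lemma~\ref{l: invnper}(3) already exhibits an equation satisfied by $\m F_n(\mathbb{Z})$ (hence by $\mathsf{V}(\m F_n(\mathbb{Z}))$) that fails in $\mathsf{LP_n}$ via $\m F_1(\mathbb{Q})$. Your write-up spells this out cleanly; there is nothing to add.
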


For each $n$, we want to find a chain $\m \Omega_n$ such that $\mathsf{LP_n}=\mathsf{V}(\m F_n(\m \Omega_n))$. Corollary~\ref{c: notFnZ} shows that we cannot take $\m \Omega_n=\mathbb{Z}$ for any $n$. At this moment it is not clear that such an  $\m \Omega_n$ exists for any $n$ other than $n=1$: we can take $\m \Omega_1=\mathbb{Q}$ because the variety of $\ell$-groups is generated by $\m F_1(\mathbb{Q})$. Actually, the next result shows that we cannot take $\m \Omega_n=\mathbb{Q}$ for any $n>1$.

Recall that an element of a chain is called a \emph{limit point} if it is the join of all of the elements strictly below it or the meet of all of the elements strictly above it.

\begin{lemma} If $\m \Omega$ is a chain where every point is a limit point, then we get $\m F_n(\m \Omega)=\m F_1(\m \Omega)$, for all $n$. In particilar, $\m F_n(\mathbb{Q})=\m F_1(\mathbb{Q})$, for all $n$. Therefore, $\mathsf{LP_n} \not =\mathsf{V}(\m F_n(\mathbb{Q}))$ for each $n>1$.
\end{lemma}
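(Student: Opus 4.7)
The approach is to show that under the hypothesis on $\m\Omega$, every element of $F(\m\Omega)$ is already an order-preserving bijection on $\Omega$; such an $f$ is invertible in $\m F(\m\Omega)$ with $f^\ell = f^r = f^{-1}$, and is in particular $1$-periodic. Combined with the trivial inclusions $F_1(\m\Omega)\subseteq F_n(\m\Omega)\subseteq F(\m\Omega)$ for every $n \in \mathbb{Z}^+$, this gives $F(\m\Omega) = F_1(\m\Omega) = F_n(\m\Omega)$. Specializing to $\m\Omega=\mathbb{Q}$, this shows that $\m F_n(\mathbb{Q}) = \m F_1(\mathbb{Q})$ is the $\ell$-group $\m{Aut}(\mathbb{Q})$, so $\mathsf{V}(\m F_n(\mathbb{Q}))\subseteq \mathsf{LP_1}$; since $\m F_n(\mathbb{Z})$ is $n$-periodic but contains non-invertible elements and hence fails $x^\ell=x^r$, we have $\mathsf{LP_1}\subsetneq \mathsf{LP_n}$ for $n>1$, which yields $\mathsf{V}(\m F_n(\mathbb{Q}))\neq \mathsf{LP_n}$.

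For the injectivity half of the bijectivity claim, suppose toward a contradiction that $f^{-1}[a] = [b,c]$ for some $b<c$ in $\Omega$. By Lemma~\ref{l: bounded preimage}(1), $f^\ell(a) = b$ and $f^r(a) = c$. Unpacking the definitions of $f^{\ell\ell}$ and $f^{rr}$, one verifies that
\[
\{a' \in \Omega : c \leq f^\ell(a')\} = \{a' \in \Omega : a'>a\}
\]
and
\[
\{a' \in \Omega : f^r(a') \leq b\} = \{a' \in \Omega : a'<a\};
\]
so the existence of $f^{\ell\ell}(c)$ and $f^{rr}(b)$, guaranteed by $f\in F(\m\Omega)$, forces $a$ to have both an immediate successor and an immediate predecessor, contradicting the hypothesis that $a$ is a limit point. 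For surjectivity, suppose $a\notin f[\Omega]$; Lemma~\ref{l: bounded preimage}(2) yields a cover $b\prec c$ in $\Omega$ with $a\in (f(b), f(c))$ and $f^\ell(a) = c$. The function $g := f^\ell$ lies in $F(\m\Omega)$ (its residual is $f$, its dual residual is $f^{\ell\ell}$, and all higher-order inverses exist by the corresponding assumption on $f$), and a short computation using $b\prec c$ gives $g(f(c)) = c = g(a)$. Since $a < f(c)$, the preimage $g^{-1}[c]$ is a non-singleton interval, which contradicts the injectivity argument already established applied to $g$.

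The main obstacle is the surjectivity step. The ``every point is a limit point'' hypothesis does not by itself rule out covers in $\m\Omega$: a pair $b\prec c$ with $b$ having no predecessor and $c$ no successor is consistent with the hypothesis, so a non-surjective $f\in F(\m\Omega)$ could in principle arise via Lemma~\ref{l: bounded preimage}(2). The resolution is to bootstrap off the injectivity argument by observing that $f^\ell$ itself belongs to $F(\m\Omega)$: any failure of surjectivity of $f$ forces $f^\ell$ to be constant on a non-singleton interval, and the injectivity claim applied to $f^\ell$ then provides the required contradiction.
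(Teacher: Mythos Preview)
Your proof is correct and follows the same overall strategy as the paper: show that every $f\in F(\m\Omega)$ is an order-preserving bijection, whence $F(\m\Omega)=F_1(\m\Omega)=F_n(\m\Omega)$.

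Your treatment of surjectivity, however, is genuinely more careful than the paper's. The paper simply asserts that the existence of a covering pair $b\prec c$ ``contradict[s] the fact that there are no covering pairs in $\m\Omega$''; but with the paper's own ``or'' definition of limit point this is not justified---e.g.\ in $((-\infty,0]\cup[1,\infty))\cap\mathbb{Q}$ every point is a limit point yet $0\prec 1$. You correctly flag this and supply a legitimate fix: a failure of surjectivity forces $f^\ell\in F(\m\Omega)$ to collapse a non-degenerate interval, so the injectivity argument applied to $f^\ell$ yields the contradiction. For injectivity, the paper just cites an external lemma (Lemma~2.7 of \cite{GG}), whereas you give a self-contained argument via the existence of $f^{\ell\ell}$ and $f^{rr}$; both are fine, yours is more transparent. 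Your derivation of $\mathsf{LP_n}\neq\mathsf{V}(\m F_n(\mathbb{Q}))$ via $\mathsf{V}(\m F_n(\mathbb{Q}))\subseteq\mathsf{LP_1}\subsetneq\mathsf{LP_n}$ (using that $\m F_n(\mathbb{Z})$ contains non-invertible elements for $n>1$) is also cleaner than the paper, which leaves this step entirely implicit.
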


\begin{proof}
By Lemma~2.7 of \cite{GG}, for $f \in F_n(\m \Omega)$ if the preimage of an element under $f$ has more than one element, then the element is not a limit point, a contradiction; so every preimage must have at most one element, i.e., $f$ is one-to-one. To show that it is onto, assume an element $a$ is not in the image. By Lemma~\ref{l: bounded preimage} there exists a covering pair $b \prec c$ such that $a \in (f(b), f(c))$, contradicting the fact that there are no covering pairs in $\m \Omega$. 
\end{proof}

In Section~\ref{s: From F_N(Omega) to an N-Diagram} we prove that for each $n$ there exists indeed a $\m \Omega_n$ such that $\mathsf{LP_n}=\mathsf{V}(\m F_n(\m \Omega_n))$. Actually, we do better than that: we identify a single chain $\m \Omega$ such that $\mathsf{LP_n}=\mathsf{V}(\m F_n(\m \Omega))$ for all $n$. This single/uniform chain is $\m \Omega=\mathbb{Q} \overrightarrow{\times}\mathbb{Z}$. Due to the wreath-product decomposition  $\m H \wr \m F_n(\mathbb{Z})$ of $\m F_n(\m \Omega)$, where $\m \Omega$ is integral, the analysis in Section~\ref{s: From F_N(Omega) to an N-Diagram} will benefit from the study of the algebra $\m F_n(\mathbb{Z})$, so the next section is devoted to that.

\section{Decidability for $F_n(\mathbb{Z})$}\label{s: F_nZ}

The wreath product decomposition for $\m F_n(\m \Omega)$ of Theorem~\ref{t: repnper} brings to the forefront the role of $\m F_n(\mathbb{Z})$ in understanding $n$-preriodic $\ell$-pregroups. In particular, it turns out that a lot of the notions (such as $n$-periodicity in a diagram) that will be needed for the generation result for $\mathsf{LP_n}$ already show up when studying the structure of $\m F_n(\mathbb{Z})$. In this section we study this structure and actually prove that the equational theory of $\m F_n(\mathbb{Z})$ is decidable. The results of this section will also be  the main driving force for proving that 
the equational theory of $n$-preriodic $\ell$-pregroups is decidable in Section~\ref{s: From F_N(Omega) to an N-Diagram}.

\subsection{Failures in compatible surjections}

Given an equation $\varepsilon$ in  intensional form $1\leq w_{1}\vee\ldots\vee w_{k}$ we will define a  set $\Delta_\varepsilon$ of terms in an expansion of the intensional language. 
First we
define the set of \emph{final subwords} of $\varepsilon$, $
 FS_\varepsilon:=\{ u: w_1=vu\text{ or  } ... \text{ or  } w_k=vu, \text{ for some } v\} 
  $.
  
  We actually take $\m {Ti}$ to satisfy the equalities $v\cdot 1=v=1 \cdot v$, so strictly speaking we take it to be a quotient of the absolutely free algebra, so $FS_\varepsilon$ contains $1$.
  The particular syntactic expressions below, serving as names for the points in $\Delta_\varepsilon$, will not be important for our results, since the work done in \cite{GG} (which we will be citing when appropriate) ensures that $\Delta_\varepsilon$ has enough points for the iterated inverses $g^{[m]}$ to be calculated correctly; we include the definition for completeness.
  
 In the following the notation $-a$ and $+a$ will be interpreted by the lower and upper cover of $a$ and $0a=a$, when evaluating the terms. For a variable $x$ among the variables $x_1, \ldots, x_n$ of  $\varepsilon$, $m\in\mathbb{N}$ and $v\in FS_\varepsilon$ we define 
  \begin{align*}
 \Delta_{x,m}^{v} & :=\{ v\} \cup \bigcup_{j=0}^{m} \{ \sigma_{j}x^{(j)}\ldots\sigma_{m}x^{(m)}v:\,\sigma_j,\ldots,\sigma_{m}\in\{-,0\},\sigma_0=0\}\\
\Delta_{x,-m}^{v} & :=\{ v\} \cup \bigcup_{j=0}^{m} \{ \sigma_{j}x^{(-j)}\ldots\sigma_{m}x^{(-m)}v:\,\sigma_{j},\ldots,\sigma_{m}\in\{+,0\},\sigma_0=0\}\\
S_\varepsilon&:=\{ (i,m,v):i\in\{1,\ldots,n\},m\in\mathbb{Z},v\in FS_\varepsilon\text{ and }x_{i}^{(m)}v\in FS_\varepsilon\} \\
\Delta_{\varepsilon}&:=\{1\} \cup \underset{(i,m,v)\in S_\varepsilon}{\bigcup}\Delta_{x_{i},m}^{v}
\end{align*}

It is easy to see that  $\Delta_\varepsilon$ is finite and that it contains $ FS_\varepsilon$.

The notion of a compatible surjection that we now define, is a sister notion to that of a diagram that allows for tighter control in falures of equations. The underlying set of a diagram can be arbitrary and the partial functions need to be explicitly given, but for compatible surjections (which are tight to a given equation $\varepsilon$) the undrerlying set has labels inhereted from $\Delta_{\varepsilon}$ and this is enough infortation to construct the partial functions. 
Given an equation $\varepsilon(x_1,\ldots,x_l)$ in intensional form, 
a \emph{compatible surjection} for $\varepsilon$ is  an onto map $\varphi:\Delta_{\varepsilon}\rightarrow\mathbb{N}_q$, where $\mathbb{N}_q=\{1, \ldots, q\}$ has its the natural order (and $q\leq |\Delta_\varepsilon|$), such that:
\begin{itemize}
    \item[(i)] The relation $g_i:=\{(\varphi(u),\varphi(x_iu)) \mid u, x_iu\in\Delta_{\varepsilon}\}$ on $\mathbb{N}_q$ is an order-preserving partial function for all $i \in \{1,\ldots,l\}$.
    \item[(ii)]  The relation ${\diagcov} :=\{(\varphi(v), \varphi(+v)) \mid v, +v\in\Delta_{\varepsilon}\} \cup \{(\varphi(-v),\varphi(v)) \mid v, -v\in\Delta_{\varepsilon}\}$ on $\mathbb{N}_q$ is contained in the covering relation $\prec$ of $\mathbb{N}_q$.
    \item[(iii)] $\varphi(x_i^{(m)}u)=g_i^{[m]}(\varphi(u))$, when $i \in \{1,\ldots,l\}$, $m\in\mathbb{Z}$ and $u, x_i^{(m)}u\in\Delta_{\varepsilon}$.
\end{itemize}

Note that the first two conditions ensure that $\m D_{\varepsilon, \varphi}:= (\mathbb{N}_q, {\leq}, {\diagcov}, g_1, \ldots, g_l)$ is a diagram; in (iii),  $g_i^{[m]}$ is calculated in this diagram. Also, it follows that the relation $\diagcov$ is an order-preserving partial function. We say that $\varphi$ is \emph{$n$-periodic} with respect to a spacing embedding, if all $g_i$'s are $n$-periodic with respect to that embedding; this is equivalent to $\m D_{\varepsilon, \varphi}$ being $n$-periodic with respect to that embedding. We say that  $\varphi$ is \emph{$n$-periodic} if it is $n$-periodic with respect to some spacing embedding.

We say that the equation $1\leq w_{1}\vee\ldots\vee w_{k}$ in intensional form \emph{fails} in a compatible surjection $\varphi$, if $\varphi(w_1), \ldots, \varphi(w_k)<\varphi(1)$. 

 We denote by $\m {Pf}(\m \Delta)^{\pm}$ the expansion of the algebra $\m {Pf}(\m \Delta)$ with two distinguished elements $+$ and $-$, where $+(a)=b$ iff $a\diagcov b$ and  $-(a)=b$ iff $b\diagcov a$.  Also, every intensional homomorphism  $\varphi: \m {Ti} \rightarrow \m {Pf}(\m \Delta)$ extends to a homomorphism $\varphi^\pm: \m {Ti}^{\pm} \rightarrow \m {Pf}(\m \Delta)^{\pm}$, where $\m {Ti}^{\pm}$ is the expansion with the two constants. Likewise, when $\m \Omega$ is an integral chain, we can consider homomorphisms $\varphi^\pm: \m {Ti}^{\pm} \rightarrow \m F(\m \Omega)^{\pm}$, where $\m F(\m \Omega)^{\pm}$ denotes the expansion of $\m F(\m \Omega)$ with the functions $+$ and $-$ that give the upper cover and the lower cover of an element. 

\begin{theorem}\label{t: DLPG to comsur}
    If an equation $\varepsilon$ fails in $\m F_n(\mathbb{Z})$, then it also fails in some $n$-periodic compatible surjection for $\varepsilon$. 
\end{theorem}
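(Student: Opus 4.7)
The plan is to convert a failure of $\varepsilon$ in $\m F_n(\mathbb{Z})$ into a compatible surjection for $\varepsilon$ by evaluating all designated syntactic terms in $\Delta_\varepsilon$ at the witness point, then to observe that the tuple of integer values produced by this evaluation is itself a spacing embedding witnessing $n$-periodicity. Concretely, suppose $\varepsilon$ has the form $1\leq w_1\vee\cdots\vee w_k$ and fails in $\m F_n(\mathbb{Z})$: there is an intensional homomorphism $\psi:\m{Ti}\to\m F_n(\mathbb{Z})$, with $\psi(x_i)=f_i$, and a point $p\in\mathbb{Z}$ such that $\psi(w_j)(p)<p=\psi(1)(p)$ for every $j$. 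Extend $\psi$ to $\psi^{\pm}:\m{Ti}^{\pm}\to\m F(\mathbb{Z})^{\pm}$ by interpreting $+$ and $-$ as successor and predecessor on $\mathbb{Z}$, and define $\hat\psi:\Delta_\varepsilon\to\mathbb{Z}$ by $\hat\psi(u):=\psi^{\pm}(u)(p)$. Let $\hat\psi_1<\cdots<\hat\psi_q$ be the elements of $\hat\psi[\Delta_\varepsilon]$ and set $\varphi(u):=j$ when $\hat\psi(u)=\hat\psi_j$; this gives a surjection $\varphi:\Delta_\varepsilon\to\mathbb{N}_q$.

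Next I would verify the three axioms of a compatible surjection. For (i), if $\varphi(u_1)=\varphi(u_2)$ then $\hat\psi(u_1)=\hat\psi(u_2)$, so $\hat\psi(x_iu_1)=f_i(\hat\psi(u_1))=f_i(\hat\psi(u_2))=\hat\psi(x_iu_2)$, hence $g_i$ is a well-defined partial function, and it is order-preserving because $f_i$ is. For (ii), if $\varphi(v)$ and $\varphi(+v)$ are linked by $\diagcov$, then $\hat\psi(+v)=\hat\psi(v)+1$ in $\mathbb{Z}$, so no element of $\hat\psi[\Delta_\varepsilon]$ lies between them, and $\varphi(v)\prec\varphi(+v)$ in $\mathbb{N}_q$; dually for the other kind of cover. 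For (iii), I invoke the fact (exactly the content of the analogous step in the $\m F(\mathbb{Z})$-proof of \cite{GG}) that $\Delta_\varepsilon$ was designed precisely so that the $\pm$-decorations $-x^{(j)}$ and $+x^{(-j)}$ inside the set $\Delta_{x,\pm m}^{v}$ witness the maxima/minima used in the definition of $f_i^{(m)}(\hat\psi(u))$; a straightforward induction on $|m|$ then shows $g_i^{[m]}(\varphi(u))=\varphi(x_i^{(m)}u)$ whenever $x_i^{(m)}u\in\Delta_\varepsilon$.

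The final step is to obtain $n$-periodicity together with the failure. Define the spacing embedding $e:\mathbb{N}_q\to\mathbb{Z}$ by $e(j):=\hat\psi_j-\hat\psi_1$; it is injective and order-preserving, its image has minimum $0$, and it preserves covers by the argument in (ii). The counterpart $g_i^e=e\circ g_i\circ e^{-1}$ is, by the very definition of $g_i$, just the restriction of the translated function $x\mapsto f_i(x+\hat\psi_1)-\hat\psi_1$ to $e[\mathbb{N}_q]$; since $f_i\in F_n(\mathbb{Z})$ and the $n$-periodicity inequality of Lemma~\ref{l: F_N(Z)}(2) is invariant under translation, each $g_i^e$ is $n$-periodic with respect to the identity, so $g_i$ is $n$-periodic with respect to $e$. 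Finally, $1,w_1,\ldots,w_k\in FS_\varepsilon\subseteq\Delta_\varepsilon$, and $\hat\psi(w_j)=\psi(w_j)(p)<p=\hat\psi(1)$, which yields $\varphi(w_j)<\varphi(1)$, so $\varepsilon$ fails in $\varphi$. The main obstacle is step (iii): although the construction of $\Delta_\varepsilon$ is tailored for it, checking that the iterated partial inverses $g_i^{[m]}$ computed inside the diagram genuinely match $f_i^{(m)}$ on the relevant evaluation points requires careful bookkeeping of which $\sigma$-decorated terms lie in $\Delta_\varepsilon$; this is where the paper relies on the machinery already developed in \cite{GG}, and the novelty here is merely to observe that this machinery carries the additional $n$-periodicity for free.
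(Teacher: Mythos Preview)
Your proposal is correct and follows essentially the same approach as the paper: both evaluate $\Delta_\varepsilon$ at the witness point $p$ via $\psi^{\pm}$, invoke the machinery of \cite{GG} for the compatible-surjection axioms (i)--(iii) and the failure of $\varepsilon$, and then observe that $n$-periodicity of each $g_i$ is inherited directly from $f_i\in F_n(\mathbb{Z})$ via Lemma~\ref{l:N-periodic}. The only cosmetic difference is that you normalize the image to $\mathbb{N}_q$ and then build a spacing embedding $e$ back into $\mathbb{Z}$, whereas the paper leaves the image $\psi_p[\Delta_\varepsilon]$ sitting inside $\mathbb{Z}$ and uses the identity as the spacing embedding; these are equivalent formulations.
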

\begin{proof}
If $\varepsilon=\varepsilon(x_1,\ldots,x_l)$ is an equation in intentional form $1\leq w_{1}\vee\ldots\vee w_{k}$ that fails in  $\m F_n(\mathbb{Z})$, then there exists a list $f=(f_1,\ldots,f_l)$ of elements of $\m F_n(\mathbb{Z})$ and $p\in\mathbb{Z}$  such that $w_{1}^{\m F_n(\mathbb{Z})}(f)(p), \ldots,  w_{k}^{\m F_n(\mathbb{Z})}(f)(p)>p$; here $f_i=\psi(x_i)$ where $\psi:\m {Ti} \ra \m F_n(\mathbb{Z})$ is the homomorphism witnessing the failure of $\varepsilon$. We denote by $\psi^\pm:\m {Ti}^\pm \ra \m F_n(\mathbb{Z})^\pm$ the extension of $\psi$.
In Theorem~3.6 of \cite{GG} it is shown that $\psi_p:\Delta_{\varepsilon}\rightarrow\psi_p[\Delta_{\varepsilon}]$ is a compatible surjection, where the order on $\psi_p[\Delta_{\varepsilon}]$ is inherited from $\mathbb{Z}$ and 
$$\psi_p(u):=\psi^\pm(u)(p)=u^{\m F_n(\mathbb{Z})^\pm}(f)(p)$$ 
for $u \in \Delta_\varepsilon$. (In \cite{GG} the compatible surjection is actually taken to be the composition of $\psi_p$ with the (unique) isomorphism of the chain $\psi_p[\Delta_{\varepsilon}]$ with the initial segment $\mathbb{N}_q$ of $\mathbb{Z}^+$, where $q=|\psi_p[\Delta_{\varepsilon}]|$, but we do not need to do that.) To simplify the notation, we write  $u_{fp}$ for $u^{\m F_n(\mathbb{Z})^\pm}(f)(p)$. Also, in the same theorem of \cite{GG} it is shown that $\varepsilon$ fails in $\psi_p$.

We will show that $\psi_p$ is $n$-periodic.
   Suppose $\psi_p(u)\leq\psi_p(v)+kn$ for some $u,v \in \Delta_\varepsilon$ and $k\in\mathbb{Z}$, i.e., $ u_{f,p}\leq v_{f,p}+kn$. 
   Given that $f_i\in F_n(\mathbb{Z})$, we have $\psi_p(x_iu)=(x_iu)_{f,p}=f_i( u_{f,p})\leq f_i(v_{f,p})+kn=(x_iv)_{f,p}+kn=\psi_p(x_iv)$. 
   This shows that all the partial functions are $n$-periodic with respect to the identity spacing embedding on $\mathbb{Z}$.
\end{proof}

\subsection{Controlling the automorphisms of $\mathbb{Z}$.}

As mentioned before, a key issue with the definition of $n$-periodicity in a diagram is that it does not provide any control of the spacing embeddings in $\mathbb{Z}$. The spacing has to do with the fact that some functions $f$ in $\m F_n(\mathbb{Z})$ have a big numerical difference $f(m)-m$ between their input and output values and thus may span multiple periods (this can be thought of as the \emph{height} 
of $f$ as measured at $m$). The lack of control of these differences translates into lack of control of the spacing embedding $e$ (when moving from a partial function on a diagram to a function of  $\m F_n(\mathbb{Z})$). Put differently, the problem is not the numerical size of the set $\Delta$ on which a diagram is based, nor of its bijective set $e[\Delta]$ in $\mathbb{Z}$, but rather the size of the convexification of $e[\Delta]$ in $\mathbb{Z}$.
We will show that every function $f$ in $\m F_n(\mathbb{Z})$  naturally decomposes of  into an automorphism $f^\circ \in F_1(\mathbb{Z})$ and a \emph{short} function $f^* \in F_n(\mathbb{Z})$, i.e., a function whose height at every point is bounded by $2n$. In that sense, the height problem is now focused only on the $f^\circ$ component of $f$; in this section, we prove results that control the height of $f^\circ$.

\medskip



The following definition characterizes all the spacing embeddings that interact well with a c-chain. 
Given a finite  sub c-chain $\m \Delta$ of $\mathbb{Z}$, a spacing embedding  $e: \m \Delta \rightarrow (\mathbb{Z}, \leq_{\mathbb{Z}},\diagcov_{\mathbb{Z}})$ is said to \emph{transfer $n$-periodicity} if for every $f\in F_n(\mathbb{Z})$, $(f|_{\Delta\times\Delta})^e$, when  non-empty, can be extended to an element of $F_n(\mathbb{Z})$; here $f|_{\Delta\times\Delta}$ is the partial function obtained by restricting both domain and range to $\Delta$ and $(f|_{\Delta\times\Delta})^e$ is its counterpart in $e[\Delta]$. Below we show that these spacing embeddings, for $n=1$, can be found by solving a linear system.

A finite subset $\Delta=\{p_0,\ldots,p_l\}$ of $\mathbb{Z}$, where $p_0<p_1<\ldots<p_l$, is called a \emph{solution} to a system of equations, if the vector $y_\Delta:=(p_1-p_0,\ldots, p_l-p_{l-1})$ of spaces between the points of $\Delta$ is a solution. Also, we say that a  spacing embedding is a \emph{solution} to a system, if its image is a solution.

\begin{lemma}\label{l:equations in delta}
If $\Delta=\{p_0,\ldots,p_l\}$ is a finite subset  of $\mathbb{Z}$, where $p_0<p_1<\ldots<p_l$, and $y_\Delta:=(p_1-p_0,\ldots, p_l-p_{l-1})$, then  for every $j,z,j',z'\in\{0,\ldots,l\}$ with $z\leq j$ and $z'\leq j'$, we have
$ p_j-p_z =p_{j'}-p_{z'}$
iff   
$$  \sum_{k=z+1}^{j} y_k -\sum_{k=z'+1}^{j'} y_k = (j'-z')-(j-z).$$
\end{lemma}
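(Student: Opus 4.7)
The plan is a short computation reducing the right-hand equation to the left-hand equality via the telescoping property of $y_k = p_k - p_{k-1}$.

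First I would apply telescoping to each of the two sums. From the definition $y_k = p_k - p_{k-1}$ we immediately get
\[
\sum_{k=z+1}^{j} y_k = (p_{z+1}-p_z) + (p_{z+2}-p_{z+1}) + \cdots + (p_j-p_{j-1}) = p_j - p_z,
\]
and symmetrically $\sum_{k=z'+1}^{j'} y_k = p_{j'}-p_{z'}$. These are the only identities doing real work in the lemma; everything after is bookkeeping.

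Substituting them into the right-hand equation, the condition becomes
\[
(p_j - p_z) - (p_{j'} - p_{z'}) = (j'-z')-(j-z).
\]
To finish, I would match this rewritten condition against $p_j-p_z = p_{j'}-p_{z'}$ by elementary rearrangement, handling both implications in parallel since every step is reversible when all quantities are integers. The index-difference term $(j'-z')-(j-z)$ on the right comes from the number of summands appearing in the two telescoping sums, and this is exactly what needs to be tracked during the rearrangement.

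I do not anticipate a substantive obstacle; the content of the lemma is entirely the telescoping identity, with the role of the lemma being to translate a condition on gaps between points of $\Delta$ into a linear equation in the spacing variables $y_k$. This translation is what enables the next results to phrase the search for spacing embeddings that transfer $n$-periodicity as a solvability problem for a linear system, ultimately giving control over the height of the automorphism component $f^\circ$ of functions in $\m F_n(\mathbb{Z})$.
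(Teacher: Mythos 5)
There is a genuine gap, and it sits precisely at the step you dismiss as ``bookkeeping.'' If you take $y_k=p_k-p_{k-1}$ literally and telescope, you get $\sum_{k=z+1}^{j}y_k=p_j-p_z$ and $\sum_{k=z'+1}^{j'}y_k=p_{j'}-p_{z'}$ exactly, with no leftover term; the displayed condition then reads $(p_j-p_z)-(p_{j'}-p_{z'})=(j'-z')-(j-z)$, and this is equivalent to $p_j-p_z=p_{j'}-p_{z'}$ only when $j-z=j'-z'$, which is not assumed. Your remark that the index-difference term ``comes from the number of summands'' has no source under your reading: pure telescoping contributes nothing per summand. Concretely, take $\Delta=\{0,2,5,6,7\}$, so $p_0=0,p_1=2,p_2=5,p_3=6,p_4=7$, and $(j,z)=(1,0)$, $(j',z')=(4,2)$. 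Then $p_j-p_z=2=p_{j'}-p_{z'}$, but with $y_k=p_k-p_{k-1}$ the two sums are both $2$, so their difference is $0$, while $(j'-z')-(j-z)=1$. So under your interpretation the stated equivalence is false, and no rearrangement can rescue the final step.

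The missing idea is an off-by-one convention: the intended spacing variables are $y_k=p_k-p_{k-1}-1$ (the number of integers strictly between consecutive points, so that covering pairs give $y_k=0$), which is how they are used in the paper's proof (it writes $p_k-p_{k-1}=y_k+1$) and explicitly in the proof of Lemma~\ref{l: AY=b equation in F_1(Z)}. With that convention each summand contributes an extra $+1$, yielding
\[
p_j-p_z=\sum_{k=z+1}^{j}(y_k+1)=\sum_{k=z+1}^{j}y_k+(j-z),
\]
and subtracting the analogous identity for $(j',z')$ gives exactly the claimed equivalence. To be fair, the vector displayed in the lemma statement is written as $(p_1-p_0,\ldots,p_l-p_{l-1})$, so you followed the letter of the statement; but that display is inconsistent with the paper's own proof and with the later use of $y_\Delta$, and a correct proof must adopt the ``gap'' convention $y_k=p_k-p_{k-1}-1$. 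Once you do, your telescoping plan goes through verbatim, with the correction terms $(j-z)$ and $(j'-z')$ appearing naturally.
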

\begin{proof}
Note that for 
$z\leq j$, we have $$ p_j-p_z =\sum_{k=z+1}^{j} (p_{k}-p_{k-1})= \sum_{k=z+1}^{j} (y_{k}+1) =\sum_{k=z+1}^{j} y_k +(j-z).$$ 
Therefore, for $z\leq j$ and $z'\leq j'$ we have:  $p_j-p_z =p_{j'}-p_{z'}$ iff 
 \begin{align*}
  \sum_{k=z+1}^{j} y_k +j-z=&\sum_{k=z'+1}^{j'} y_k +j'-z' \qquad  \text{ iff } \nonumber\\
  \sum_{k=z+1}^{j} y_k -\sum_{k=z'+1}^{j'} y_k=&(j'-z')-(j-z) \qedhere
 \end{align*}
\end{proof}

A system of equations $AY=b$, where $A$ is an $L\times l$ matrix  and $Y$ and $b$  column vectors of size $l$, is said to be \emph{$\Delta$-bounded} if $\Delta$ is a solution to the system, 
$l\leq |\Delta|$, the entries in $b$ are integers in $[-2|{\Delta}|, 2|{\Delta}|]$ 
and the entries of $A$ are integers in $\{-1,0,1\}$. 

The next lemma states that finding all the spacing embedddings that transfer $1$-periodicity amounts to solving a linear system.
We stress that sub c-chains of $\mathbb{Z}$ are not assumed to be  convex.
 
 \begin{lemma}\label{l: AY=b equation in F_1(Z)}
    If $\m \Delta$ is a finite  sub c-chain of $\mathbb{Z}$, then  there exists a $\Delta$-bounded system of equations such that a spacing embedding with domain $\m \Delta$ is a solution iff it transfers 1-periodicity. 
\end{lemma}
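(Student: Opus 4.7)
The plan is to reduce ``transferring 1-periodicity'' to a finite list of linear equalities on the spacing vector $y_{e[\Delta]}$ of the image of the embedding, and then apply Lemma~\ref{l:equations in delta} to turn these into the required system.

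First, I would identify $F_1(\mathbb{Z})$ explicitly. By Lemma~\ref{l:2N} (with $n=1$), a 1-periodic function on $\mathbb{Z}$ satisfies $f(x) = f(x-1) + 1$ for all $x$, so $f(x) = f(0) + x$; thus $F_1(\mathbb{Z})$ is exactly the group of translations $t_d\colon x \mapsto x + d$. Consequently, $e$ transfers 1-periodicity iff for every $k \in \mathbb{Z}$, the counterpart of $t_k|_{\Delta \times \Delta}$, which sends $e(p) \mapsto e(p+k)$ whenever $p, p+k \in \Delta$, extends to some translation of $\mathbb{Z}$. This is equivalent to requiring that $e(p+k) - e(p)$ depend only on $k$, not on $p$, whenever $p, p+k \in \Delta$; in terms of the enumeration $\Delta = \{p_0 < \cdots < p_l\}$, this says that for all indices $z \leq j$ and $z' \leq j'$ in $\{0, \ldots, l\}$ with $p_j - p_z = p_{j'} - p_{z'}$, one has $e(p_j) - e(p_z) = e(p_{j'}) - e(p_{z'})$.

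Next, I would apply Lemma~\ref{l:equations in delta} to $e[\Delta]$ with spacing vector $Y := y_{e[\Delta]}$, which translates each such equality into the linear equation
$$\sum_{k=z+1}^{j} y_k - \sum_{k=z'+1}^{j'} y_k = (j' - z') - (j - z).$$
I would then form the system $AY = b$ whose rows are precisely these equations, one for each quadruple $(j, z, j', z')$ with $z \leq j$, $z' \leq j'$, and $p_j - p_z = p_{j'} - p_{z'}$. By the equivalence of Lemma~\ref{l:equations in delta} applied to $e[\Delta]$, the spacing embedding $e$ is a solution iff $e$ satisfies all such equalities, iff $e$ transfers 1-periodicity.

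Finally, I would verify $\Delta$-boundedness. The number of variables is $l = |\Delta| - 1 \leq |\Delta|$. Each coefficient of $y_k$ in any row is the difference of two $\{0,1\}$-indicators (of whether $k$ lies in $(z,j]$ or in $(z',j']$), hence lies in $\{-1, 0, 1\}$. Each entry of $b$ satisfies $|(j'-z') - (j-z)| \leq 2l \leq 2|\Delta|$, as required. Finally, $\Delta$ itself is a solution: by Lemma~\ref{l:equations in delta} applied to $\Delta$, each defining quadruple $(j, z, j', z')$ of the system, chosen so that $p_j - p_z = p_{j'} - p_{z'}$, yields the corresponding equation as an identity on $y_\Delta$.

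The main subtlety, rather than a serious obstacle, is the identification of $F_1(\mathbb{Z})$ with translations and the resulting reduction of the infinitary quantifier ``for all $f \in F_1(\mathbb{Z})$'' to a finite combinatorial condition on pairs of equal differences in $\Delta$; once this is in hand, the conversion to a linear system is a direct invocation of Lemma~\ref{l:equations in delta}, and the bookkeeping needed to check $\Delta$-boundedness is routine.
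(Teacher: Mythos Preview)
Your proof is correct and takes essentially the same approach as the paper's: both reduce to the fact that $F_1(\mathbb{Z})$ consists exactly of translations, so transferring $1$-periodicity amounts to requiring $e(p_j)-e(p_z)=e(p_{j'})-e(p_{z'})$ whenever $p_j-p_z=p_{j'}-p_{z'}$, and Lemma~\ref{l:equations in delta} converts these into the desired $\Delta$-bounded linear system. One minor difference worth noting: the paper also adjoins the equations $Y_k=0$ for each designated covering $p_{k-1}\diagcov p_k$ in $\m\Delta$; these are redundant for the biconditional in the present lemma (any spacing embedding satisfies them automatically), but the paper relies on them in the proof of Lemma~\ref{l: bounded spacing embedding in Aut(Z)} to guarantee that an arbitrary small non-negative integer solution of the system actually defines a spacing embedding.
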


\begin{proof}
First consider all the possible non-empty intersections of automorphisms of $\mathbb{Z}$ with $\Delta\times\Delta$, and note that there are finitely-many such intersections, say $s$-many; let $f_1,\dots,f_s\in F_1(\mathbb{Z})$ be functions realizing these intersections. Since the intersections are subsets of $\Delta\times \Delta$, we get $s\leq 2^{|\Delta\times\Delta|}=2^{|\Delta|^2}$.
If $\Delta=\{p_0,\ldots,p_l\}$, where $p_0<p_1<\ldots<p_l$, we set $y_j:=p_j-p_{j-1}-1$, for $j \in \{1, \ldots, ,l\}$ and $y_{\Delta}:=(y_1,\ldots,y_{l})$; in particular, if $p_{j-1}\diagcov p_j$ we have $y_j=0$. 
For 
$z< j$ and for all $i$, 
if  $p_{z'}:=f_{i}(p_z)$ and $p_{j'}:=f_{i}(p_j)$ are in $\Delta$, then $p_{j}-p_{z} =p_{j'}-p_{z'}$ since $f_i$ is a translation, 
so by Lemma~\ref{l:equations in delta} we get that $(y_n)$ satisfies the equation:
   \begin{align}
  \sum_{k=z+1}^{j} Y_k -\sum_{k=z'+1}^{j'} Y_k&= (j'-z')-(j-z) \tag{$i,z,j$}
 \end{align} 
 We also consider the equations $Y_k=0$ for all $k\in \{1,\ldots, l\}$ where $p_{k-1}\diagcov p_k$ (capturing the fact that $y_k=0$ for these $k$'s). 
The resulting finite system, of all $(i,z,j)$-equations and all the $Y_k=0$ equations, can be written as $AY=b$, for an $L \times l$ matrix $A$, and column vectors $Y$ and $b$ of size $l$, where the entries of $A$ are in $\{-1,0,1\}$, the entries of $b$ have absolute value at most $2|\Delta|$, since they all have the form $(j'-z')-(j-z)$, and $\Delta$ is a solution of the system. Therefore, the system is $\Delta$-bounded.
Let  $\overline{\cdot}:\Delta\rightarrow{\mathbb{Z}}$ be a spacing embedding.

If $\overline{\Delta}$ is a solution of the  system $AY=b$,   for each $i\in \{1,\ldots,s\}$,
we define the function $g_i:\mathbb{Z}\rightarrow\mathbb{Z}$  by $g_i(x)=x+(\overline{f_i(c_i)}-\overline{c_i})$ for all $x\in\mathbb{Z}$, where  $c_i,f_i(c_i)\in\Delta$; thus $g_i\in F_1(\mathbb{Z})$. 
Note that $g_i$ does not depend on the choice of $c_i$:  If  $k,f_i(k)\in\Delta$, then there exist $j,z,j',z'\in\{1,\ldots,l\}$ such that $\overline{c_i}=\overline{p_j}$, $\overline{k}=\overline{p_z}$, $\overline{f_{i}(c_i)}=\overline{p_{j'}}$, and $\overline{f_{i}(k)}=\overline{p_{z'}}$; since $\bar{\Delta}$ is a solution, $y_\Delta$  it satisfies the equation $(i,\min\{j,z\},\max\{j,z\})$, which by Lemma~\ref{l:equations in delta}  is equivalent $\overline{f_{i}(k)}-\overline{f_{i}(c_i)}=\overline{k} -\overline{c_i}$. Hence,  $g_i(\overline{k})=\overline{k}+(\overline{f_i(c_i)}-\overline{c_i})=\overline{f_{i}(k)}$, for all $k$ such that $k,f_i(k)\in\Delta$. As a result, the restriction of $g_i$ to $\overline{\Delta}$ is equal to the counterpart of $f_i|_{ \Delta\times\Delta}$ in $\overline{\Delta}$.

Conversely,  if $\overline{\cdot}:\Delta\rightarrow{\mathbb{Z}}$  transfers 1-periodicity, 
for every   $i\in\{1,\dots,s\}$ the counterpart  of ${f_i}|_{\Delta\times\Delta}$ by $\overline{\cdot}$ can be extended to a function in $F_1(\mathbb{Z})$, i.e., for all $p_j,p_z\in \Delta$  such that $p_{j'}:=f_i(p_j),p_{z'}:=f_i(p_z)\in\Delta$, we have that $\overline{p_j}-\overline{p_z}=\overline{f_i(p_j)}-\overline{f_i(p_z)}$. Hence, 
$    \sum_{k=z+1}^{j} \overline{y_k} -\sum_{k=z'+1}^{j'} \overline{y_k}=(j'-z')-(j-z) $ by Lemma~\ref{l:equations in delta}, so if $z<j$, then $\overline{\Delta}$ satisfies the equation $(i,z,j)$. Since this is true for all $i\in\{1,\ldots,s\}$, $\overline{\cdot}$ preserves coverings and $j,z\in Dom({f_i}|_{\Delta\times\Delta})$, we have that $\overline{\Delta}$ satisfies $Ax=b$. 
\end{proof}

The following theorems will help us guarantee the existence of small solutions for $\Delta$-bounded  systems of equations. 

 \begin{theorem} \textnormal{\cite{BFT}}
 Let $Ay = b$ be a system of $M \times l$ linear equations with integer coefficients. Assume the rows of $A$ are linearly independent (hence $M\leq l$) and denote by $\gamma'$ (respectively $\gamma$) the maximum of the absolute values of the $M \times M$ minors of the matrix $A$ (the augmented matrix $(A|b)$). If the system has a solution in non-negative integers, then the system has a solution $(y_k)$ in the non-negative integers with $y_k \leq \gamma'$ for $l - M$ variables and $y_i \leq (l- M + 1)\gamma$ for $M$ variables.
 \end{theorem}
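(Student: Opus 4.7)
The plan is to prove this classical result of Borosh, Flahive, and Treybig by a two-phase exchange-and-bound argument, first controlling the $l - M$ ``small'' coordinates and then using Cramer's rule to bound the remaining $M$ ``large'' coordinates.

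\textbf{Phase 1 (controlling non-basic variables).} Starting from the given non-negative integer solution $y^{(0)}$, I iteratively produce a non-negative integer solution with at most $M$ coordinates exceeding $\gamma'$. Suppose the current iterate $y$ has $|S| > M$ coordinates exceeding $\gamma'$, where $S \subseteq \{1,\ldots,l\}$. Then the columns $\{A_k : k \in S\}$ are more than $M$ vectors in $\mathbb{R}^M$ and hence linearly dependent. Selecting $M+1$ of them and expressing the dependence via Cramer's rule on the resulting singular $M \times (M+1)$ submatrix yields a nonzero integer vector $c$ supported on these $M+1$ indices, with $Ac = 0$ and $|c_k|$ equal (up to sign) to an $M \times M$ minor of $A$, so $|c_k| \leq \gamma'$ for every $k$. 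Because each $y_k$ with $k \in S$ strictly exceeds $\gamma' \geq |c_k|$, the perturbations $y \pm tc$ remain non-negative for a controlled integer range of $t$. Choosing the sign of $c$ so as to decrease $\sum_{k \in S} y_k$ and taking $t$ maximal before a coordinate drops to the threshold $\gamma'$ produces a new non-negative integer solution with $|S|$ strictly smaller. Iterating terminates and delivers $|S| \leq M$, which is the desired bound on $l - M$ variables.

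\textbf{Phase 2 (bounding basic variables).} Once at most $M$ coordinates exceed $\gamma'$, let $B$ be an index set of size exactly $M$ containing all such coordinates and indexing $M$ linearly independent columns (possible since $\mathrm{rank}(A) = M$; we may need to extend a smaller set, which is harmless because the added coordinates are below $\gamma'$). Setting $N := \{1,\ldots,l\} \setminus B$, we have $|N| = l - M$ and $A_B y_B = b - A_N y_N$. Applying Cramer's rule, $(y_B)_i = \det(M_i)/\det(A_B)$, where $M_i$ is the matrix obtained by replacing the $i$-th column of $A_B$ by $b - A_N y_N$. Since $\det(A_B)$ is a nonzero integer, $|\det(A_B)| \geq 1$. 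Expanding $\det(M_i)$ by multilinearity in its $i$-th column yields one term from $b$ (an $M \times M$ minor of $(A|b)$, bounded by $\gamma$) and $l - M$ terms of the form $y_k \cdot \det(\cdots A_k \cdots)$ for $k \in N$, each of which is an $M \times M$ minor of $A$ (bounded by $\gamma' \leq \gamma$). Since each $y_k$ with $k \in N$ is bounded by $\gamma'$ after Phase 1 (and one can in fact replace $\gamma'$ by $1$ in the key estimate by a more careful bookkeeping on the step sizes in Phase 1), combining yields $|(y_B)_i| \leq (l - M + 1)\gamma$.

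\textbf{Main obstacle.} The delicate point is Phase 1: one must guarantee that the chosen sign and integer step $t$ genuinely produce a non-negative integer solution that makes progress, even in degenerate cases (e.g.\ when the dependence vector $c$ has nonzero coordinate sum that could cycle, or when only one coordinate in $S$ can be reduced per step). This is handled by minimizing $y$ over all non-negative integer solutions with respect to an appropriate lexicographic or weight-based order, so that the exchange argument cannot cycle. The bound $\gamma'$ appears naturally here as the maximum magnitude of entries of an integer dependence extracted from an $M\times(M+1)$ submatrix of $A$ via Cramer's rule, which is exactly what makes the above step size admissible.
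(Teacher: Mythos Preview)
The paper does not prove this theorem; it is quoted from \cite{BFT} and used as a black box, so there is no proof here to compare against. Your two-phase outline is indeed the shape of the Borosh--Flahive--Treybig argument, and your Phase~1 (the exchange step reducing the number of coordinates exceeding $\gamma'$ to at most $M$) is correct.

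The genuine gap is in Phase~2. From your Cramer expansion, using only $y_k \leq \gamma'$ for $k \in N$ and $|\det(A_B)| \geq 1$, you obtain $|(y_B)_i| \leq \gamma + (l-M)(\gamma')^2$, which can exceed $(l-M+1)\gamma$ whenever $(\gamma')^2 > \gamma$. Concretely, take $M=2$, $l=3$, $A = \begin{pmatrix} 1 & 0 & -K \\ 0 & 1 & -K \end{pmatrix}$, $b = 0$, so $\gamma' = \gamma = K$. The vector $(K^2, K^2, K)$ is a non-negative integer solution with exactly $M = 2$ coordinates exceeding $\gamma'$, so your Phase~1 terminates immediately with $B = \{1,2\}$, $N = \{3\}$, $y_3 = K \leq \gamma'$; yet $y_1 = y_2 = K^2 \gg 2K = (l-M+1)\gamma$. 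The theorem is still true here (the solution $(K,K,1)$, or even $(0,0,0)$, works), but your procedure does not find such a solution: a further reduction beyond Phase~1 is required before the Cramer bound is sharp enough. Your parenthetical fix (``replace $\gamma'$ by $1$'') would need $y_3 \leq 1$, which your Phase~1 mechanism cannot deliver since it only fires when $|S| > M$, and here $|S| = M$ already. Closing this gap is exactly the nontrivial content of \cite{BFT}; you should consult the original paper for the missing step.
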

  Given that $l-M+1\geq 1$ and $\gamma'\leq \gamma$ we obtain the following corollary.
  \begin{corollary}\label{c:small solutions}
Let $Ay = b$ be a system of $M \times l$ linear equations with integer coefficients. Assume the rows of $A$ are linearly independent, and denote by $\gamma$ the maximum of the absolute values of the $M \times M$ minors of the augmented matrix $(A| b)$. If the system has a solution in non-negative integers, then the system has a solution $(y_k)$ in non-negative integers with $y_k \leq (l-M+1)\gamma$ for all $k$.  
\end{corollary}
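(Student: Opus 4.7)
The plan is to derive the corollary as an immediate weakening of the theorem cited just before it (the BFT theorem). The BFT theorem already gives a non-negative integer solution in which $l-M$ of the variables are bounded by $\gamma'$ (the maximal absolute value of an $M\times M$ minor of $A$) and the remaining $M$ variables are bounded by $(l-M+1)\gamma$ (the corresponding quantity for the augmented matrix $(A\mid b)$). So the work reduces to checking that both of these bounds are simultaneously dominated by the single quantity $(l-M+1)\gamma$.

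First I would observe that $\gamma'\leq \gamma$. This is because $A$ is obtained from the augmented matrix $(A\mid b)$ by deleting its last column, so every $M\times M$ submatrix of $A$ is also an $M\times M$ submatrix of $(A\mid b)$; hence every minor appearing in the definition of $\gamma'$ appears in the collection defining $\gamma$, and taking absolute values and maxima preserves the inequality. Next I would note that the linear independence of the rows of $A$ forces $M\leq l$, so $l-M+1\geq 1$. Combining these two observations gives $\gamma'\leq \gamma\leq (l-M+1)\gamma$, which means that the tighter bound $y_k\leq \gamma'$ for the first $l-M$ variables already implies $y_k\leq (l-M+1)\gamma$; the other $M$ variables satisfy the same bound by the theorem directly. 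Hence all coordinates of the guaranteed solution lie in $[0,(l-M+1)\gamma]$, as claimed.

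I do not anticipate any technical obstacle here: the corollary is a purely bookkeeping consequence of the BFT theorem, and the only content is the comparison $\gamma'\leq\gamma$ together with the trivial inequality $l-M+1\geq 1$.
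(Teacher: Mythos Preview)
Your proposal is correct and matches the paper's own justification almost verbatim: the paper simply notes ``Given that $l-M+1\geq 1$ and $\gamma'\leq \gamma$ we obtain the following corollary,'' which is exactly the comparison you spell out. If anything, your explanation of why $\gamma'\leq\gamma$ (every $M\times M$ submatrix of $A$ is already an $M\times M$ submatrix of $(A\mid b)$) is more detailed than what the paper records.
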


Using these corollaries and Lemma~\ref{l: AY=b equation in F_1(Z)}, we will prove that for every sub c-chain of $\mathbb{Z}$, there exists a spacing embedding of bounded height that transfers 1-periodicity. 
For convenience, we define $\rho(a):=2a^3a!+a+1$, for  $a\in\mathbb{Z}^+$.

\begin{lemma}\label{l: bounded spacing embedding in Aut(Z)}
Given a finite  sub c-chain $\m \Delta$ of $\mathbb{Z}$, there exists a spacing embedding with domain $\Delta$ that transfers 1-periodicity and that has height at most $\rho(|\Delta|)$.
\end{lemma}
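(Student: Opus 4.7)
The plan is to combine Lemma~\ref{l: AY=b equation in F_1(Z)} with Corollary~\ref{c:small solutions}, carefully bounding the maximum minor of the associated linear system. First, by Lemma~\ref{l: AY=b equation in F_1(Z)}, I fix a $\Delta$-bounded system $AY=b$, where $A$ is $L\times l$ with $l\leq|\Delta|$, whose solutions are exactly the spacing embeddings with domain $\Delta$ that transfer $1$-periodicity. Since $\Delta$ itself is a solution (by definition of $\Delta$-bounded), the system has a non-negative integer solution. I then pass to a maximal linearly independent subset of the rows of $A$ to obtain a subsystem $A'Y=b'$ of size $M\times l$ with $M\leq l$; because the discarded equations are linear combinations of the kept ones and the original system was consistent, $A'Y=b'$ has exactly the same solution set and, in particular, a non-negative integer solution.

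Next, I apply Corollary~\ref{c:small solutions} to $A'Y=b'$: it yields a non-negative integer solution $(\bar y_1,\ldots,\bar y_l)$ with $\bar y_k\leq(l-M+1)\gamma$ for every $k$, where $\gamma$ is the maximum absolute value of an $M\times M$ minor of the augmented matrix $(A'\mid b')$. To bound $\gamma$, I use the Leibniz expansion: every $M\times M$ submatrix of $(A'\mid b')$ has entries in $\{-1,0,1\}$ except possibly in a single column inherited from $b'$, whose entries lie in $[-2|\Delta|,2|\Delta|]$. Each of the $M!$ signed products in the determinant therefore contains at most one factor from the $b'$-column and the rest from $\{-1,0,1\}$, so it has absolute value at most $2|\Delta|$. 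Hence $\gamma\leq M!\cdot 2|\Delta|\leq|\Delta|!\cdot 2|\Delta|$, and $(l-M+1)\gamma\leq|\Delta|\cdot|\Delta|!\cdot 2|\Delta|=2|\Delta|^2\,|\Delta|!$.

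Finally, I convert this solution back into a spacing embedding and estimate its height. Using the convention in the proof of Lemma~\ref{l: AY=b equation in F_1(Z)}, if $\bar\Delta=\{\bar p_0<\bar p_1<\cdots<\bar p_l\}$ is the corresponding image with $\bar p_0=0$, then $\bar p_k-\bar p_{k-1}=\bar y_k+1$, so the height equals
\[
\bar p_l-\bar p_0=\sum_{k=1}^{l}(\bar y_k+1)\leq l\cdot 2|\Delta|^2\,|\Delta|!+l\leq 2|\Delta|^3\,|\Delta|!+|\Delta|<\rho(|\Delta|),
\]
which is the desired bound.

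The only real subtlety is the estimate on $\gamma$: the naive Hadamard bound (or $M!\cdot(2|\Delta|)^M$) is much too weak, and it is essential to exploit the fact that at most one column of every $M\times M$ submatrix can carry entries larger than $1$ in absolute value. Everything else is a routine reduction; in particular, replacing $A$ by a linearly independent row subset preserves the solution set because consistency of the original system forces the discarded equations to be automatically satisfied by any solution of the retained ones.
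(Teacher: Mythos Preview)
Your argument is correct and follows essentially the same route as the paper: invoke Lemma~\ref{l: AY=b equation in F_1(Z)}, pass to a row-independent subsystem, bound the $M\times M$ minors via the Leibniz expansion using that at most one column carries entries of size up to $2|\Delta|$, apply Corollary~\ref{c:small solutions}, and convert the resulting small solution back into a spacing embedding whose height is at most $2|\Delta|^3|\Delta|!+|\Delta|<\rho(|\Delta|)$. The only cosmetic difference is that you select independent rows of $A$ rather than of $(A\mid b)$, but consistency of the original system makes the two choices equivalent, exactly as you note.
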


\begin{proof}
By Lemma~\ref{l: AY=b equation in F_1(Z)}, there exists a $\Delta$-bounded system of equations $AY=b$
satisfying the conditions of the lemma. 
Since the system $Ay=b$ is $\Delta$-bounded, $\Delta$ is a solution for it, hence the system is consistent. It follows, say by Theorem~2.38 of \cite{RC}, that $rank(A)=rank(A|b)$. So we can select a maximal collection of linearly independent rows of $(A|b)$ to obtain an equivalent  (sub)system $A'y=b'$ where $rank(A|b)=rank(A'|b')=rank(A')$, hence all the rows are linearly independent. Therefore, $A'$ has  dimensions $M\times l$ where $M\leq l$. Also, since $y_\Delta$ is a solution (in the non-negative integers) of $Ay=b$, it is also a solution of the equivalent system $A'y=b'$.

Observe that if $C=(c_{i,j})$ is an $M\times M$ submatrix of $(A'|b')$, then 
$$
  |det(C)|=|\sum_{\sigma\in S_M} sgn(\sigma)c_{\sigma(1),1}\cdot\ldots\cdot c_{\sigma(M),M}|
          \leq \sum_{\sigma\in S_M} |c_{\sigma(1),1}|\cdot\ldots\cdot |c_{\sigma(M),M}|$$
Since $Ax=b$ is $\Delta$-bounded, all columns of $(A',b')$, aside from the last column which equals $b'$, have entries among $-1,0,1$. Therefore,  for all $\sigma\in S_M$, we have $|c_{\sigma(i),i}|\leq 1$ for $i\neq M$ and $|c_{\sigma(M),M}|
\leq 2|{\Delta}|$, since the entries of $b$ are integers in $[-2|{\Delta}|, 2|{\Delta}|]$ (again because of  $\Delta$-boundedness). So 
$$  |det(C)|\leq \sum_{\sigma\in S_M} |c_{\sigma(1),1}|\cdot\ldots\cdot |c_{\sigma(M),M}|
          \leq \sum_{\sigma\in S_M} |c_{\sigma(M),M}|\leq M!\cdot 2|\Delta|$$
Therefore, if  $\gamma$ is the maximum of the absolute values of the $M \times M$ minors of the augmented matrix $(A'| b')$, then $\gamma\leq M!\cdot 2|\Delta|\leq  l!\cdot 2|\Delta|\leq (|\Delta|)!\cdot 2|\Delta|$, where we used that $l\leq |\Delta|$ due to $\Delta$-boundedness. Also $l-M+1\leq l\leq |\Delta|$. So, by Corollary~\ref{c:small solutions}, there exists a solution $(y_k)$ in the non-negative integers with $y_k\leq  (|\Delta|)!\cdot 2|\Delta|^2$ for all $k$.

Using the solution $(y_k)$, we define the map  $\overline{\cdot}:\Delta\rightarrow\mathbb{Z}$ by setting $\overline{p_0}=0$ and  $\overline{p_i}:=i+\sum ^{i}_{j=1} y_j$, for all $i\in \{1,\ldots, l\}$; so $\overline{\Delta}=\{\overline{p_0}, \ldots, \overline{p_l}\}$. The map $\overline{\cdot}$ is order preserving, because the solution $(y_k)$ has non-negative entries, and it preserves $\diagcov$, since every zero entry in $y_{\Delta}$ implies that the same entry is zero in $y_{\bar{\Delta}}$. Also, observe that $y_{\bar{\Delta}}=(y_k)$, hence $\overline{\cdot}$ is a solution of $A'x=b'$, thus also of $Ax=b$.  
Therefore, $\overline{\cdot}$ is a spacing embedding that is a solution of $Ax=b$. Since we took $Ax=b$ to be the system given by Lemma~\ref{l: AY=b equation in F_1(Z)}, by that lemma it follows that $\overline{\cdot}$ transfers $1$-periodicity. 

Since $y_k\leq  (|\Delta|)!\cdot 2|\Delta|^2$ for all $k$, the height of $\overline{\cdot}$ is  $1+ \overline{p_i}=1+l+\sum ^{i}_{j=1} y_j\leq 1+l + l ( (|\Delta|)!\cdot 2|\Delta|^2) \leq 1+ |\Delta|+ |\Delta| ( (|\Delta|)!\cdot 2|\Delta|^2)= 1+ |\Delta|+  (|\Delta|)!\cdot 2|\Delta|^3$, hence the height is bounded by  $\rho(|\Delta|)$. 
\end{proof}

\subsection{From $\m F_n(\mathbb{Z})$ to a short $n$-periodic compatible surjection.}

We will describe the decomposition of functions $f \in \m F_n(\mathbb{Z})$ into functions of $\m{F}_1(\mathbb{Z})$ and short functions of $\m{F}_n(\mathbb{Z})$, and combine it with the results of the previous section to ensure that for any $n$-periodic diagram there is a short spacing embedding witnessing the $n$-periodicity.

A spacing embedding $e:\m \Delta \rightarrow \mathbb{Z}$ over a c-chain $\m\Delta$ is called \emph{$n$-short} if its height is at most $\nu(|\Delta|)$, where $\nu(a)=(\rho(3a)+1)n$ 
for all $a\in\mathbb{Z}^+$. (As always we assume that images of spacing embeddings are non-negative and include $0$). 
Recall that for a fixed $n\in\mathbb{Z}^+$, for every $x\in\mathbb{Z}$, $Rx$ denotes the remainder and $Qx$ the quotient of dividing $x$ by $n$, while  $Sx:=x-Rx=nQx$.

Given $g\in F_1(\mathbb{Z})$ and $h\in F_n(\mathbb{Z})$, we have $g,h\in F_n(\mathbb{Z})$ so  $g\circ h\in F_n(\mathbb{Z})$. Conversely,  for $f\in F_n(\mathbb{Z})$, we define $f^{\circ}(x):=x+Sf(0)$ and $f^{*}(x):=f(x)-Sf(0)$, for all $x \in \mathbb{Z}$. In the following lemmas we will make use of the fact that for all $g\in F_1(\mathbb{Z})$ and $x\in\mathbb{Z}$, we have $g(x)=g(x-k)+k$ for all $k\in\mathbb{Z}$, and in particular $g(x)=x+g(0)$.   

\begin{lemma}\label{l:fof* decomp}
The maps $f \mapsto (f^\circ, f^*)$ and $(f^\circ, f^*)\mapsto f^{\circ}\circ f^{*}$ define a bijection between $F_n(\mathbb{Z})$ and the set of pairs $f^{\circ}\in F_1(\mathbb{Z})$  and  $f^{*}\in F_n(\mathbb{Z})$ with $Sf^{\circ}(0)=f^{\circ}(0)$ and $0\leq f^{*}(0)<n$. 
\end{lemma}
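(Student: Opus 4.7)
The plan is to verify the two maps are well-defined with the claimed codomains and that they are mutually inverse; this is essentially a book-keeping exercise once one unpacks the definitions of $R$, $Q$, and $S$.

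First I would check that $f \mapsto (f^{\circ}, f^{*})$ lands in the claimed set. Since $Sf(0)$ is by definition an integer multiple of $n$, the map $f^{\circ}: x \mapsto x + Sf(0)$ is a translation on $\mathbb{Z}$, hence lies in $F_1(\mathbb{Z})$. For $f^{*}(x) = f(x) - Sf(0)$, observe that subtracting a fixed integer constant preserves both order-preservation and the finite-preimage condition of Lemma~\ref{l: F_N(Z)}, and also preserves the $n$-periodicity inequality $x \leq y + kn \Rightarrow f^{*}(x) \leq f^{*}(y) + kn$; so $f^{*} \in F_n(\mathbb{Z})$. The side conditions are then immediate: $f^{\circ}(0) = Sf(0)$, and since $Sf(0)$ is a multiple of $n$ we have $Sf^{\circ}(0) = S(Sf(0)) = Sf(0) = f^{\circ}(0)$; and $f^{*}(0) = f(0) - Sf(0) = Rf(0) \in [0,n)$.

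Next I would verify the composition identity $f^{\circ} \circ f^{*} = f$: for every $x \in \mathbb{Z}$,
\[
(f^{\circ} \circ f^{*})(x) = f^{*}(x) + Sf(0) = (f(x) - Sf(0)) + Sf(0) = f(x).
\]
This shows one direction of the claimed bijection on the level of set-maps, and also shows that $(f^{\circ}, f^{*}) \mapsto f^{\circ} \circ f^{*}$ lands in $F_n(\mathbb{Z})$ when $f^{\circ} \in F_1(\mathbb{Z}) \subseteq F_n(\mathbb{Z})$ and $f^{*} \in F_n(\mathbb{Z})$, since $F_n(\mathbb{Z})$ is closed under composition by Lemma~\ref{l: F_N(Omega)}.

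For the converse direction I would start from an arbitrary pair $(g, h)$ with $g \in F_1(\mathbb{Z})$, $h \in F_n(\mathbb{Z})$, $Sg(0) = g(0)$, and $0 \leq h(0) < n$, and set $f := g \circ h$. The key computation is
\[
f(0) = g(h(0)) = h(0) + g(0),
\]
using that $g$ is a translation with $g(x) = x + g(0)$. By hypothesis $g(0) = Sg(0)$ is a multiple of $n$ and $0 \leq h(0) < n$, so the uniqueness of the division algorithm forces $Rf(0) = h(0)$ and $Sf(0) = g(0)$. Consequently $f^{\circ}(x) = x + Sf(0) = x + g(0) = g(x)$ and $f^{*}(x) = f(x) - Sf(0) = g(h(x)) - g(0) = h(x)$, so applying the forward map to $f$ returns precisely $(g,h)$. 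Combined with the previous paragraph, this establishes that the two maps are mutually inverse, yielding the bijection.

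The proof is essentially routine; the only subtlety is recognizing that the hypothesis $Sf^{\circ}(0) = f^{\circ}(0)$ encodes exactly ``$f^{\circ}(0)$ is a multiple of $n$,'' so that the division-algorithm uniqueness can be applied in the backward direction.
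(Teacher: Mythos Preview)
Your proof is correct and follows essentially the same approach as the paper's own proof: verify the forward map lands in the claimed set of pairs, check $f^{\circ}\circ f^{*}=f$, and then for the converse use that $g(0)$ is a multiple of $n$ and $0\leq h(0)<n$ to recover $(g,h)$ from $g\circ h$ via the division algorithm. The only cosmetic difference is that the paper phrases the key step as $S(gh(0))=S(h(0)+Sg(0))=Sg(0)=g(0)$ using properties of $S$, whereas you invoke uniqueness of the division algorithm directly; these are the same computation.
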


\begin{proof}
   For $f\in F_n(\mathbb{Z})$, we have $f^{\circ}(x):=x+Sf(0)$ and $f^{*}(x):=f(x)-Sf(0)$  for all $x \in \mathbb{Z}$, so $f^{\circ}\in F_1(\mathbb{Z})$, $f^{*}\in F_n(\mathbb{Z})$, $Sf^{\circ}(0)=S(Sf(0))=Sf(0)=f^{\circ}(0)$ and $0\leq f^{*}(0)=f(0)-Sf(0)<n$. So the map  $f \mapsto (f^\circ, f^*)$ indeed maps to suitable pairs.

  Given $f\in F_n(\mathbb{Z})$, we have   
    $(f^{\circ}\circ f^{*})(x)=f^{\circ}(f(x)-Sf(0))
                                =f(x)-Sf(0)+Sf(0)=f(x)$.
    So, the two maps compose to the identity, one way.
    
 Conversely, if $g\in F_1(\mathbb{Z})$, $h\in F_n(\mathbb{Z})$, $Sg(0)=g(0)$, and $0\leq h(0)<n$, then $S(gh(0))=S(h(0)+g(0))=S(h(0)+Sg(0))=S(g(0))=g(0)$. Therefore, 
 $(g\circ h)^{\circ}(x)=x+S(gh(0))=x+g(0)=g(x)$ and
 $(g\circ h)^{*}(x)=gh(x)-S(gh(0))=h(x)+g(0)-g(0)=h(x)$.
%
\end{proof}

Lemma~\ref{l: bounded spacing embedding in Aut(Z)} provides spacing embeddings (of controlled height) that transfer $1$-periodicity. Using that result and the decomposition of Lemma~\ref{l:fof* decomp}, we  will construct spacing embeddings  (of controlled height) that transfer $n$-periodicity, for any given $n$.

  \begin{example}\label{e: 2}
  We will demonstrate the idea of how, given a sub c-chain of $\mathbb{Z}$ based on  $\Delta=\{0,4\}$, we can obtain a spacing embedding $e$ with domain $\Delta$ that transfers $n$-periodicity.  In other words, we will construct $e$ such that given any function $f$ of $F_n(\mathbb{Z})$, the partial function $(f|_{\Delta\times \Delta})^e$ is extendable to an $n$-periodic function on $\mathbb{Z}$.
  
  We first decompose $f$ as $f=f^\circ \circ f^*$, where $f^{\circ}\in F_1(\mathbb{Z})$ and $f^{*}\in F_n(\mathbb{Z})$ with  $Sf^{\circ}(0)=f^{\circ}(0)$ and $0\leq f^{*}(0)<n$, according to  Lemma~\ref{l:fof* decomp};   Figure~\ref{f: figures} shows an example of that. We will keep $f^*$ as it is, but we will obtain an improved version of $f^\circ$ and compose it back with $f^*$. 

 We will first view  $f^\circ$ as providing us information about only the multiples of $n$---i.e., only about the subset $n\mathbb{Z}$ of $\mathbb{Z}$---and we will scale $n\mathbb{Z}$ into a copy of $\mathbb{Z}$.
Since $f^{\circ}(0)=Sf^{\circ}(0)$, we have  $f^{\circ}(0)=kn$ for some $k\in\mathbb{Z}$, hence $f^\circ(x)=x+kn$, for all $x\in\mathbb{Z}$; i.e., $f^\circ$ is the translation by $kn$. 
By dividing $kn$ by $n$, we consider the function $f^{\circ}\div n$ that translates by $k$, i.e., $f^{\circ}\div n:\mathbb{Z}\rightarrow\mathbb{Z}$ is given by $(f^{\circ}\div n)(x) 
=x+k$ for all $x\in\mathbb{Z}$; we set $g:= f^{\circ}\div n$ for brevity. The function $f^{\circ}\div n$ captures the behavior of $f^\circ$ on the first elements of the periods (the multiples of $n$), and it scales $n\mathbb{Z}$ into a copy of $\mathbb{Z}$. We will apply Lemma~\ref{l: bounded spacing embedding in Aut(Z)} to this scaled function.

We define the set $\widetilde{\Delta}=\{Qx:x\in\Delta\}=\{0,2\}$ and recall that Lemma~\ref{l: bounded spacing embedding in Aut(Z)} produces a spacing embedding $d:\tilde{\Delta}\rightarrow\mathbb{Z}$ that transfers 1-periodicity and has bounded height. So, $(g|_{\tilde{\Delta}\times\tilde{\Delta}})^d$ can be extended to 
a function $g_{\tilde{\Delta},d}:\mathbb{Z}\rightarrow\mathbb{Z}$ that is $1$-periodic; say  $g_{\tilde{\Delta},d}(x)=x+\ell$ for all $x\in\mathbb{Z}$, for some $\ell \in\mathbb{Z}$. By Lemma~\ref{l: bounded spacing embedding in Aut(Z)}, the height is controlled in terms of the convexification of $d[\tilde{\Delta}]$.

We now scale this improved function $g_{\tilde{\Delta},d}$ back: we view the domain $\mathbb{Z}$ of $g_{\tilde{\Delta},d}$ as a copy of  the multiples of $n$, and we move from this subset $n\mathbb{Z}$  to all of $\mathbb{Z}$.
We define the function $g_{\tilde{\Delta},d}\times n:\mathbb{Z}\rightarrow\mathbb{Z}$ by $(g_{\widetilde{\Delta},d}\times n)(x)=x+n \ell$ for all $x\in\mathbb{Z}$. 

 By composing  $g_{\tilde{\Delta},d}\times n$ with $f^*$, we obtain a shorter version of $f$. We claim that $e(x)=d(Qx)n+Rx$, for all $x\in \Delta$, defines the desired spacing embedding and that $f_{\Delta, e}:=(g_{\tilde{\Delta},d}\times n) \circ f^*$  
 is an $n$-periodic function extending  $(f|_{\Delta\times \Delta})^e$; see Figure~\ref{f: figures}. This is the general process we follow in  the proof of the next lemma; for technical reasons the c-chain $\widetilde{\Delta}$ will need to be suitably larger, as we will see in the argument. 
  \end{example}

\begin{figure}[ht]\def\eq{=}
 \begin{center}
{\scriptsize

\begin{tikzpicture}
[scale=0.43]
\node[fill,draw,circle,scale=0.3,left](9) at (0,9){};
\node[right](9.1,0) at (-.9,9.25){$7$};
\node[fill,draw,circle,scale=0.3,right](9r) at (2,9){};
\node[right](9.1) at (2.1,9.25){$7$};

\node[fill,draw,circle,scale=0.3,left](8) at (0,8){};
\node[right](8.1,0) at (-.9,8.25){$6$};
\node[fill,draw,circle,scale=0.3,right](8r) at (2,8){};
\node[right](8.1) at (2.1,8.25){$6$};

\node[fill,draw,circle,scale=0.3,left](7) at (0,7){};
\node[right](7.1,0) at (-.9,7.25){$5$};
\node[fill,draw,circle,scale=0.3,right](7r) at (2,7){};
\node[right](7.1) at (2.1,7.25){$5$};

\node[fill,draw,circle,red,scale=0.3,left](6) at (0,6){};
\node[right](6.1,0) at (-.9,6.25){$4$};
\node[fill,draw,circle,red,scale=0.3,right](6r) at (2,6){};
\node[right](6.1) at (2.1,6.25){$4$};

\node[fill,draw,circle,scale=0.3,left](5) at (0,5){};
\node[right](9.1,0) at (-.9,5.25){$3$};
\node[fill,draw,circle,scale=0.3,right](5r) at (2,5){};
\node[right](5.1) at (2.1,5.25){$3$};

\node[fill,draw,circle,scale=0.3,left](4) at (0,4){};
\node[right](8.1,0) at (-.9,4.25){$2$};
\node[fill,draw,circle,scale=0.3,right](4r) at (2,4){};
\node[right](4.1) at (2.1,4.25){$2$};

\node[fill,draw,circle,scale=0.3,left](3) at (0,3){};
\node[right](3.1,0) at (-.9,3.25){$1$};
\node[fill,draw,circle,scale=0.3,right](3r) at (2,3){};
\node[right](3.1) at (2.1,3.25){$1$};

\node[fill,draw,circle,red,scale=0.3,left](2) at (0,2){};
\node[right](2.1,0) at (-.9,2.25){$0$};
\node[fill,draw,circle,red,scale=0.3,right](2r) at (2,2){};
\node[right](2.1) at (2.1,2.25){$0$};
\node at (9)[above=3pt]{$\vdots$};
\node at (9r)[above=3pt]{$\vdots$};
\node at (2)[below=-1pt]{$\vdots$};
\node at (2r)[below=-1pt]{$\vdots$};
\draw[-](2)--(6r);
\draw[-](3)--(6r);
\draw[-](4)--(8r);
\draw[-](5)--(8r);

\node at (1,1.25){\normalsize $f$};
\end{tikzpicture}
\quad
\begin{tikzpicture}
[scale=0.43]
\node[fill,draw,circle,scale=0.3,left](9) at (0,9){};
\node[right](9.1,0) at (-.9,9.25){$7$};
\node[fill,draw,circle,scale=0.3,right](9r) at (2,9){};
\node[right](9.1) at (2.1,9.25){$7$};

\node[fill,draw,circle,scale=0.3,left](8) at (0,8){};
\node[right](8.1,0) at (-.9,8.25){$6$};
\node[fill,draw,circle,scale=0.3,right](8r) at (2,8){};
\node[right](8.1) at (2.1,8.25){$6$};

\node[fill,draw,circle,scale=0.3,left](7) at (0,7){};
\node[right](7.1,0) at (-.9,7.25){$5$};
\node[fill,draw,circle,scale=0.3,right](7r) at (2,7){};
\node[right](7.1) at (2.1,7.25){$5$};

\node[fill,draw,circle,scale=0.3,left](6) at (0,6){};
\node[right](6.1,0) at (-.9,6.25){$4$};
\node[fill,draw,circle,scale=0.3,right](6r) at (2,6){};
\node[right](6.1) at (2.1,6.25){$4$};

\node[fill,draw,circle,scale=0.3,left](5) at (0,5){};
\node[right](9.1,0) at (-.9,5.25){$3$};
\node[fill,draw,circle,scale=0.3,right](5r) at (2,5){};
\node[right](5.1) at (2.1,5.25){$3$};

\node[fill,draw,circle,scale=0.3,left](4) at (0,4){};
\node[right](8.1,0) at (-.9,4.25){$2$};
\node[fill,draw,circle,scale=0.3,right](4r) at (2,4){};
\node[right](4.1) at (2.1,4.25){$2$};

\node[fill,draw,circle,scale=0.3,left](3) at (0,3){};
\node[right](3.1,0) at (-.9,3.25){$1$};
\node[fill,draw,circle,scale=0.3,right](3r) at (2,3){};
\node[right](3.1) at (2.1,3.25){$1$};

\node[fill,draw,circle,scale=0.3,left](2) at (0,2){};
\node[right](2.1,0) at (-.9,2.25){$0$};
\node[fill,draw,circle,scale=0.3,right](2r) at (2,2){};
\node[right](2.1) at (2.1,2.25){$0$};
\node at (9)[above=3pt]{$\vdots$};
\node at (9r)[above=3pt]{$\vdots$};
\node at (2)[below=-1pt]{$\vdots$};
\node at (2r)[below=-1pt]{$\vdots$};
\draw[-](2)--(2r);
\draw[-](3)--(2r);
\draw[-](4)--(4r);
\draw[-](5)--(4r);
\draw[-](6)--(6r);
\draw[-](7)--(6r);
\draw[-](8)--(8r);
\draw[-](9)--(8r);

\node at (1,1.25){\normalsize$f^{*}$};
\end{tikzpicture}
\quad
\begin{tikzpicture}
[scale=0.43]
\node[fill,draw,circle,scale=0.3,left](9) at (0,9){};
\node[right](9.1,0) at (-.9,9.25){$7$};
\node[fill,draw,circle,scale=0.3,right](9r) at (2,9){};
\node[right](9.1) at (2.1,9.25){$7$};

\node[fill,draw,circle,scale=0.3,left](8) at (0,8){};
\node[right](8.1,0) at (-.9,8.25){$6$};
\node[fill,draw,circle,scale=0.3,right](8r) at (2,8){};
\node[right](8.1) at (2.1,8.25){$6$};

\node[fill,draw,circle,scale=0.3,left](7) at (0,7){};
\node[right](7.1,0) at (-.9,7.25){$5$};
\node[fill,draw,circle,scale=0.3,right](7r) at (2,7){};
\node[right](7.1) at (2.1,7.25){$5$};

\node[fill,draw,circle,red,scale=0.3,left](6) at (0,6){};
\node[right](6.1,0) at (-.9,6.25){$4$};
\node[fill,draw,circle,red,scale=0.3,right](6r) at (2,6){};
\node[right](6.1) at (2.1,6.25){$4$};

\node[fill,draw,circle,scale=0.3,left](5) at (0,5){};
\node[right](9.1,0) at (-.9,5.25){$3$};
\node[fill,draw,circle,scale=0.3,right](5r) at (2,5){};
\node[right](5.1) at (2.1,5.25){$3$};

\node[fill,draw,circle,scale=0.3,left](4) at (0,4){};
\node[right](8.1,0) at (-.9,4.25){$2$};
\node[fill,draw,circle,scale=0.3,right](4r) at (2,4){};
\node[right](4.1) at (2.1,4.25){$2$};

\node[fill,draw,circle,scale=0.3,left](3) at (0,3){};
\node[right](3.1,0) at (-.9,3.25){$1$};
\node[fill,draw,circle,scale=0.3,right](3r) at (2,3){};
\node[right](3.1) at (2.1,3.25){$1$};

\node[fill,draw,circle,red,scale=0.3,left](2) at (0,2){};
\node[right](2.1,0) at (-.9,2.25){$0$};
\node[fill,draw,circle,red,scale=0.3,right](2r) at (2,2){};
\node[right](2.1) at (2.1,2.25){$0$};
\node at (9)[above=3pt]{$\vdots$};
\node at (9r)[above=3pt]{$\vdots$};
\node at (2)[below=-1pt]{$\vdots$};
\node at (2r)[below=-1pt]{$\vdots$};
\draw[-](2)--(6r);
\draw[-](3)--(7r);
\draw[-](4)--(8r);
\draw[-](5)--(9r);

\node at (1,1.25){\normalsize$f^{\circ}$};
\end{tikzpicture}
\quad
\begin{tikzpicture}
[scale=0.43]  
\node[fill,draw,circle,scale=0.3,left](8) at (0,8){};
\node[right](8.1,0) at (-.9,8.25){$3$};
\node[fill,draw,circle,scale=0.3,right](8r) at (2,8){};
\node[right](8.1) at (2.1,8.25){$3$};

\node[fill,draw,circle,red,scale=0.3,left](6) at (0,6){};
\node[right](6.1,0) at (-.9,6.25){$2$};
\node[fill,draw,circle,red,scale=0.3,right](6r) at (2,6){};
\node[right](6.1) at (2.1,6.25){$2$};

\node[fill,draw,circle,scale=0.3,left](4) at (0,4){};
\node[right](8.1,0) at (-.9,4.25){$1$};
\node[fill,draw,circle,scale=0.3,right](4r) at (2,4){};
\node[right](4.1) at (2.1,4.25){$1$};

\node[fill,draw,circle,red,scale=0.3,left](2) at (0,2){};
\node[right](2.1,0) at (-.9,2.25){$0$};
\node[fill,draw,circle,red,scale=0.3,right](2r) at (2,2){};
\node[right](2.1) at (2.1,2.25){$0$};
\draw[-](2)--(6r);
\draw[-](4)--(8r);
\node at (8)[above=3pt]{$\vdots$};
\node at (8r)[above=3pt]{$\vdots$};
\node at (2)[below=-1pt]{$\vdots$};
\node at (2r)[below=-1pt]{$\vdots$};
\node at (1,-0.1){\normalsize$f^{\circ}\div n$};
\end{tikzpicture}
\quad
\begin{tikzpicture}
[scale=0.43]  
\node[fill,draw,circle,scale=0.3,left](8) at (0,8){};
\node[right](8.1,0) at (-.9,8.25){$3$};
\node[fill,draw,circle,scale=0.3,right](8r) at (2,8){};
\node[right](8.1) at (2.1,8.25){$3$};

\node[fill,draw,circle,scale=0.3,left](6) at (0,6){};
\node[right](6.1,0) at (-.9,6.25){$2$};
\node[fill,draw,circle,scale=0.3,right](6r) at (2,6){};
\node[right](6.1) at (2.1,6.25){$2$};

\node[fill,draw,circle,red,scale=0.3,left](4) at (0,4){};
\node[right](8.1,0) at (-.9,4.25){$1$};
\node[fill,draw,circle,red,scale=0.3,right](4r) at (2,4){};
\node[right](4.1) at (2.1,4.25){$1$};

\node[fill,draw,circle,red,scale=0.3,left](2) at (0,2){};
\node[right](2.1,0) at (-.9,2.25){$0$};
\node[fill,draw,circle,red,scale=0.3,right](2r) at (2,2){};
\node[right](2.1) at (2.1,2.25){$0$};
\draw[-](2)--(4r);
\draw[-](4)--(6r);
\draw[-](6)--(8r);
\node at (8)[above=3pt]{$\vdots$};
\node at (8r)[above=3pt]{$\vdots$};
\node at (2)[below=-1pt]{$\vdots$};
\node at (2r)[below=-1pt]{$\vdots$};
\node at (1,-0.1){\normalsize$g_{\tilde{\Delta},d}$};
\end{tikzpicture}
\quad
\begin{tikzpicture}
[scale=0.43]
\node[fill,draw,circle,scale=0.3,left](9) at (0,9){};
\node[right](9.1,0) at (-.9,9.25){$7$};
\node[fill,draw,circle,scale=0.3,right](9r) at (2,9){};
\node[right](9.1) at (2.1,9.25){$7$};

\node[fill,draw,circle,scale=0.3,left](8) at (0,8){};
\node[right](8.1,0) at (-.9,8.25){$6$};
\node[fill,draw,circle,scale=0.3,right](8r) at (2,8){};
\node[right](8.1) at (2.1,8.25){$6$};

\node[fill,draw,circle,scale=0.3,left](7) at (0,7){};
\node[right](7.1,0) at (-.9,7.25){$5$};
\node[fill,draw,circle,scale=0.3,right](7r) at (2,7){};
\node[right](7.1) at (2.1,7.25){$5$};

\node[fill,draw,circle,scale=0.3,left](6) at (0,6){};
\node[right](6.1,0) at (-.9,6.25){$4$};
\node[fill,draw,circle,scale=0.3,right](6r) at (2,6){};
\node[right](6.1) at (2.1,6.25){$4$};

\node[fill,draw,circle,scale=0.3,left](5) at (0,5){};
\node[right](9.1,0) at (-.9,5.25){$3$};
\node[fill,draw,circle,scale=0.3,right](5r) at (2,5){};
\node[right](5.1) at (2.1,5.25){$3$};

\node[fill,draw,circle,red,scale=0.3,left](4) at (0,4){};
\node[right](8.1,0) at (-.9,4.25){$2$};
\node[fill,draw,circle,red,scale=0.3,right](4r) at (2,4){};
\node[right](4.1) at (2.1,4.25){$2$};

\node[fill,draw,circle,scale=0.3,left](3) at (0,3){};
\node[right](3.1,0) at (-.9,3.25){$1$};
\node[fill,draw,circle,scale=0.3,right](3r) at (2,3){};
\node[right](3.1) at (2.1,3.25){$1$};

\node[fill,draw,circle,red,scale=0.3,left](2) at (0,2){};
\node[right](2.1,0) at (-.9,2.25){$0$};
\node[fill,draw,circle,red,scale=0.3,right](2r) at (2,2){};
\node[right](2.1) at (2.1,2.25){$0$};
\node at (9)[above=3pt]{$\vdots$};
\node at (9r)[above=3pt]{$\vdots$};
\node at (2)[below=-1pt]{$\vdots$};
\node at (2r)[below=-1pt]{$\vdots$};
\draw[-](2)--(4r);
\draw[-](3)--(5r);
\draw[-](4)--(6r);
\draw[-](5)--(7r);
\draw[-](6)--(8r);
\draw[-](7)--(9r);

\node at (1,-0.1){\normalsize$g_{\tilde{\Delta},d}\times n$};
\end{tikzpicture}
\quad
\begin{tikzpicture}
[scale=0.43]
\node[fill,draw,circle,scale=0.3,left](9) at (0,9){};
\node[right](9.1,0) at (-.9,9.25){$7$};
\node[fill,draw,circle,scale=0.3,right](9r) at (2,9){};
\node[right](9.1) at (2.1,9.25){$7$};

\node[fill,draw,circle,scale=0.3,left](8) at (0,8){};
\node[right](8.1,0) at (-.9,8.25){$6$};
\node[fill,draw,circle,scale=0.3,right](8r) at (2,8){};
\node[right](8.1) at (2.1,8.25){$6$};

\node[fill,draw,circle,scale=0.3,left](7) at (0,7){};
\node[right](7.1,0) at (-.9,7.25){$5$};
\node[fill,draw,circle,scale=0.3,right](7r) at (2,7){};
\node[right](7.1) at (2.1,7.25){$5$};

\node[fill,draw,circle,scale=0.3,left](6) at (0,6){};
\node[right](6.1,0) at (-.9,6.25){$4$};
\node[fill,draw,circle,scale=0.3,right](6r) at (2,6){};
\node[right](6.1) at (2.1,6.25){$4$};

\node[fill,draw,circle,scale=0.3,left](5) at (0,5){};
\node[right](9.1,0) at (-.9,5.25){$3$};
\node[fill,draw,circle,scale=0.3,right](5r) at (2,5){};
\node[right](5.1) at (2.1,5.25){$3$};

\node[fill,draw,circle,red,scale=0.3,left](4) at (0,4){};
\node[right](8.1,0) at (-.9,4.25){$2$};
\node[fill,draw,circle,red,scale=0.3,right](4r) at (2,4){};
\node[right](4.1) at (2.1,4.25){$2$};

\node[fill,draw,circle,scale=0.3,left](3) at (0,3){};
\node[right](3.1,0) at (-.9,3.25){$1$};
\node[fill,draw,circle,scale=0.3,right](3r) at (2,3){};
\node[right](3.1) at (2.1,3.25){$1$};

\node[fill,draw,circle,red,scale=0.3,left](2) at (0,2){};
\node[right](2.1,0) at (-.9,2.25){$0$};
\node[fill,draw,circle,red,scale=0.3,right](2r) at (2,2){};
\node[right](2.1) at (2.1,2.25){$0$};
\node at (9)[above=3pt]{$\vdots$};
\node at (9r)[above=3pt]{$\vdots$};
\node at (2)[below=-1pt]{$\vdots$};
\node at (2r)[below=-1pt]{$\vdots$};
\draw[-](2)--(4r);
\draw[-](3)--(4r);
\draw[-](4)--(6r);
\draw[-](5)--(6r);
\draw[-](6)--(8r);
\draw[-](7)--(8r);

\node at (1,-0.1){\normalsize$f_{\Delta,e}$};
\end{tikzpicture}
}
\end{center}
\caption{
Shortening an element of $\m F_n(\mathbb{Z})$
}
\label{f: figures}
\label{f:omega bar}
\end{figure}
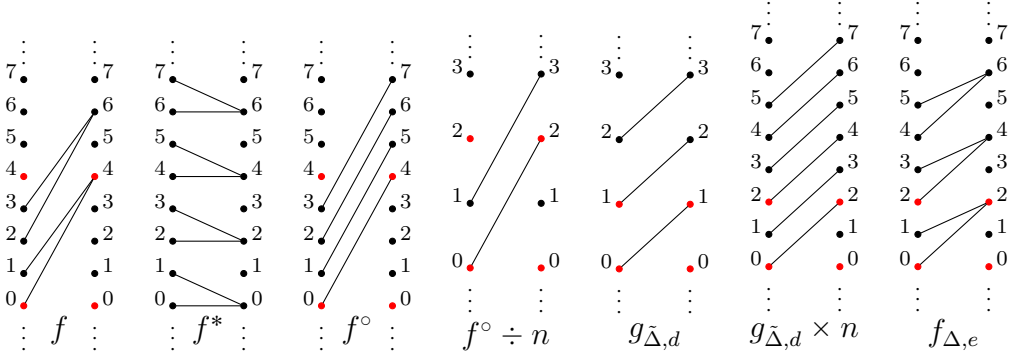

\begin{lemma}\label{l:bounded for F_N(Z)}
Given  a finite sub c-chain $\m \Delta$ of $\mathbb{Z}$, 
there exists an $n$-short spacing embedding with domain $\m \Delta$ that transfers $n$-periodicity. 
\end{lemma}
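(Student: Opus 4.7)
The plan is to follow the informal construction in Example~\ref{e: 2} and turn it into a rigorous argument. The key tool is the decomposition $f = f^\circ \circ f^*$ from Lemma~\ref{l:fof* decomp}: every $f \in F_n(\mathbb{Z})$ splits into a translation $f^\circ$ by a multiple $kn$ of $n$ and a ``short'' $n$-periodic function $f^*$ with $0 \leq f^*(0) < n$. Since $f^*$ moves each point at most one period forward (because order-preservation together with $n$-periodicity forces $f^*(x) \in [Sx, Sx + 2n)$), the height problem for $f$ is concentrated entirely in the translation $f^\circ$, which we will compress using the one-periodic bound of Lemma~\ref{l: bounded spacing embedding in Aut(Z)}.

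First I would build an auxiliary c-chain $\widetilde{\Delta} \subseteq \mathbb{Z}$ containing at least $\{Qx : x \in \Delta\}$, enlarged so that every shifted quotient needed by the $n$-periodic extension step is already present and so that the covers in $\Delta$ that cross period boundaries are reflected as covers in $\widetilde{\Delta}$. A careful choice keeps $|\widetilde{\Delta}| \leq 3|\Delta|$. Applying Lemma~\ref{l: bounded spacing embedding in Aut(Z)} to $\widetilde{\Delta}$ yields a spacing embedding $d : \widetilde{\Delta} \to \mathbb{Z}$ that transfers $1$-periodicity and has height at most $\rho(3|\Delta|)$. I would then define the candidate embedding $e : \Delta \to \mathbb{Z}$ by $e(x) := d(Qx)\, n + Rx$. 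Injectivity, order-preservation, and preservation of covers all follow from $d$ being a spacing embedding once one splits into the cases of covers within a single period (handled by the $Rx$ component, which lies in $[0, n)$) and covers across periods (handled by the induced covers in $\widetilde{\Delta}$ and their preservation by $d$).

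The core of the argument is verifying that $e$ transfers $n$-periodicity. Given $f \in F_n(\mathbb{Z})$ with decomposition $f = f^\circ \circ f^*$ and $f^\circ$ translation by $kn$, set $g := f^\circ \div n$ (translation by $k$), and use that $d$ transfers $1$-periodicity to extend the counterpart $(g|_{\widetilde{\Delta} \times \widetilde{\Delta}})^d$ to some $g_{\widetilde{\Delta}, d} \in F_1(\mathbb{Z})$, necessarily a translation by some integer $\ell$. Its scaled-up version $g_{\widetilde{\Delta}, d} \times n$ is translation by $n\ell$, and the candidate extension of $(f|_{\Delta \times \Delta})^e$ is
\[
f_{\Delta, e} := (g_{\widetilde{\Delta}, d} \times n) \circ f^*,
\]
which lies in $F_n(\mathbb{Z})$ as the composition of a translation by a multiple of $n$ with an $n$-periodic function.

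The main obstacle will be verifying the identity $f_{\Delta, e}(e(x)) = e(f(x))$ whenever $x, f(x) \in \Delta$. Unpacking both sides using $f(x) = f^*(x) + kn$, the $n$-periodicity of $f^*$ (which gives $f^*(e(x)) = f^*(Rx) + d(Qx)\, n$), and the definition of $e$, the equality reduces to an identity of the form $d(Qf(x)) = d(Qx + k) + Qf^*(Rx)$, where $Qf^*(Rx) \in \{0, 1\}$ measures whether $f^*$ crosses into the next period at $Rx$. The case $Qf^*(Rx) = 0$ is immediate since then $Qf(x) = Qx + k$ and $d(Qx + k) = d(Qx) + \ell$ by construction of $g_{\widetilde{\Delta}, d}$; the case $Qf^*(Rx) = 1$ is exactly where the enlargement of $\widetilde{\Delta}$ pays off, as it ensures that both $Qf(x) - 1$ and $Qf(x)$ lie in $\widetilde{\Delta}$ with the cover $Qf(x) - 1 \diagcov Qf(x)$, so that $d$ preserves this cover and the required identity holds. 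Finally, the height of $e$ is bounded by $n \rho(3|\Delta|) + (n - 1) < (\rho(3|\Delta|) + 1)n = \nu(|\Delta|)$, so $e$ is $n$-short as required.
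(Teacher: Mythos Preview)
Your proposal is correct and follows essentially the same approach as the paper: construct $\widetilde{\Delta}$ of size at most $3|\Delta|$ from the quotients and their neighbors, apply Lemma~\ref{l: bounded spacing embedding in Aut(Z)} to obtain the $1$-periodicity-transferring embedding $d$, set $e(x)=d(Qx)\,n+Rx$, and verify via the decomposition $f=f^\circ\circ f^*$ and a case split on whether $f^*(Rx)\in[0,n)$ or $[n,2n)$ that the scaled-back composition $(g_{\widetilde{\Delta},d}\times n)\circ f^*$ extends the counterpart of $f|_{\Delta\times\Delta}$. The paper makes the auxiliary chain explicit as $\widetilde{\Delta}=\{Qx,\,Qx+1,\,Qx-1 : x\in\Delta\}$ and organizes the final verification by first proving $e f(x)=\overline{Qf^\circ(Sx)}\,n+f^*(Rx)$ and then chaining through $h$, but the substance is identical to your reduction.
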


\begin{proof}
For $\widetilde{\Delta}:=\{Qx:x\in\Delta\}\cup\{Qx+1:x\in\Delta\}\cup\{Qx-1:x\in\Delta\}$
 we have $|\widetilde{\Delta}|\leq 3|\Delta|$; let $\widetilde{\mathbf{\Delta}}$ be the sub-chain of $\mathbb{Z}$ with base set $\widetilde{\Delta}$. By Lemma~\ref{l: bounded spacing embedding in Aut(Z)}, there exists a spacing embedding $\overline{\cdot}:\widetilde{\Delta}\rightarrow\mathbb{Z}$ that transfers $1$-periodicity and has height at most $\rho(|\widetilde{\Delta}|)$.
 Let $e:\Delta\rightarrow\mathbb{Z}$ be given by $e(x)=\overline{Qx}n+Rx$, for all $x\in \Delta$.
 Observe that $\max(e[\Delta])\leq 
 (\rho(|\widetilde{\Delta}|)+1)n\leq 
 (\rho(3|{\Delta}|)+1)n =\nu(|{\Delta}|)$; hence $e$ is $n$-short. We will show that $e$ transfers $n$-periodicity. 
 
 If $f\in F_n(\mathbb{Z})$ and $f|_{\Delta\times\Delta}$ is non-empty, 
  by Lemma~\ref{l:fof* decomp} there exist $f^{\circ}\in F_1(\mathbb{Z})$ and $f^{*}\in F_n(\mathbb{Z})$ with $0<f^{*}(0)\leq n$, $Sf^{\circ}(0)=f^{\circ}(0)$, 
 and $f=f^{\circ}\circ f^{*}$. 
 Using the $n$-periodicity of $f^*$ and that $f^\circ(a)=f^\circ(0)+a$, for all $a$, we obtain
 $f(x)=(f^{\circ}\circ f^*)(Sx+Rx)=f^{\circ}(0)+f^*(Sx+Rx)=f^{\circ}(0)+Sx+f^*(Rx)
         =f^{\circ}(Sx)+f^*(Rx)$.
Since $f^{\circ}(Sx)=Sx+f^\circ(0)=Sx+Sf^\circ(0)=S(x+f^\circ(0))=Sf^{\circ}(Sx)$, 
we get that $f^{\circ}(Sx)$ is a multiple of $n$, so $Sf(x)=f^{\circ}(Sx)$ or $Sf(x)=f^{\circ}(Sx)+n$, depending on whether $f^*(Rx)$ is smaller or bigger than $n$; note that indeed $f^*(Rx)\in [0, 2n)_\mathbb{Z}$, since $0 \leq Rx <0+n$ implies 
$0 \leq f^*(0) \leq f^*(Rx) <f^*(0)+n<n+n=2n$, by $n$-periodicity. 
Therefore, $f^{\circ}(Sx)=Sf(x)=n \cdot Qf(x)$ or $f^{\circ}(Sx)=Sf(x)-n=n \cdot Qf(x) -n$; hence
 $Qf^{\circ}(Sx)=Qf(x)$ or $Qf^{\circ}(Sx)= Qf(x) -1$.
In either case,  if $x,f(x)\in\Delta$, then  $Qx, Qf^{\circ}(Sx)\in\widetilde{\Delta}$. 

We define the function $g$ on $\mathbb{Z}$, by $g(a)=Qf^\circ(a n)$, for all $a \in \mathbb{Z}$; note that $g(Qx)=Qf^{\circ}(Sx)$, 
for all $x\in \mathbb{Z}$ (denoted by $f^{\circ}\div n$ in the Example~\ref{e: 2}). Observe that 
if $x,f(x)\in \Delta$, then  $Qx,g(Qx)\in\widetilde{\Delta}$;
hence, since $f|_{\Delta\times\Delta}$ is non-empty so is $g|_{\widetilde{\Delta}\times\widetilde{\Delta}}$. Since $\overline{\cdot}:\widetilde{\Delta}\rightarrow\mathbb{Z}$  transfers $1$-periodicity, the counterpart of $g|_{\widetilde{\Delta}\times\widetilde{\Delta}}$  by $\overline{\cdot}$ can be extended to some $h\in F_1(\mathbb{Z})$. We further define the function $k\in F_1(\mathbb{Z})$ given by $k(x)=h(0)n+x$ (denoted by $h\times n$ in the Example~\ref{e: 2}). 

 Note that $k\circ f^*\in F_n(\mathbb{Z})$; we will show now that 
$k\circ f^*$ extends the counterpart of $f|_{\Delta\times\Delta}$ by $e$. Assume that we have $x,f(x)\in\Delta$. We  first  show that
$e f(x)=\overline{Qf^{\circ}(Sx)}n+f^{*}(Rx)$, i.e., 
$e(f^{\circ}(Sx)+f^{*}(Rx))=\overline{Qf^{\circ}(Sx)}n+f^{*}(Rx)$. When $0\leq f^{*}(Rx)<n$, then $R(f^{\circ}(Sx)+f^{*}(Rx))=Rf^*(Rx)=f^*(Rx)$ and $Q(f^{\circ}(Sx)+f^*(Rx))=Qf^{\circ}(Sx)$, so the equation holds.
If $n\leq f^{*}(Rx) <2n$
, then 
$0\leq f^{*}(Rx)-n<n$, so $R(f^{\circ}(Sx)+f^{*}(Rx))=f^*(Rx)-n$ and 
$\overline{Q(f^{\circ}(Sx)+f^*(Rx))}=\overline{Qf^{\circ}(Sx)+1}=\overline{Qf^{\circ}(Sx)}+1$, since $\overline{\cdot}$ preseves coverings,
so  the equation holds in this case, as well.
Therefore,
$e{f(x)}=e (f^{\circ}(Sx)+f^*(Rx))=\overline{Q{f^{\circ}(Sx)}}n+f^*(Rx)=\overline{g(Qx)}n+f^*(Rx)=h(\overline{Qx})n+f^*(Rx)=h(0)n+\overline{Qx}n+f^*(Rx)
=k(0)+\overline{Qx}n+f^*(Rx)
=k(\overline{Qx}n+f^*(Rx))
=kf^*(\overline{Qx}n+Rx)
=kf^*(e x)$.
\end{proof}


\begin{lemma}\label{l:short compatible surjection}
    Every  $n$-periodic compatible surjection (with respect to some  spacing embedding) is $n$-periodic with respect to an $n$-short spacing embedding.
\end{lemma}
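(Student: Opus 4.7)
The plan is to take the given (possibly tall) spacing embedding witnessing $n$-periodicity and then shorten it by post-composing with the $n$-short spacing embedding provided by Lemma~\ref{l:bounded for F_N(Z)}. Concretely, suppose $\varphi\colon\Delta_\varepsilon\rightarrow\mathbb{N}_q$ is $n$-periodic with respect to a spacing embedding $e\colon\mathbb{N}_q\rightarrow\mathbb{Z}$, so the induced partial functions $g_1,\dots,g_l$ on $\mathbb{N}_q$ have counterparts $g_i^e$ that extend (by Lemma~\ref{l:extention of counterparts}) to functions $f_i\in F_n(\mathbb{Z})$. Let $\Delta':=e[\mathbb{N}_q]$, viewed as a finite sub c-chain of $\mathbb{Z}$ under the pushforward of $\diagcov_{\mathbb{N}_q}$ via $e$ (which is contained in $\prec_{\mathbb{Z}}$ because $e$ is a spacing embedding).

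Apply Lemma~\ref{l:bounded for F_N(Z)} to $\m\Delta'$ to obtain an $n$-short spacing embedding $d\colon\m\Delta'\rightarrow\mathbb{Z}$ that transfers $n$-periodicity; its height is at most $\nu(|\Delta'|)=\nu(q)$, since $e$ is a bijection onto~$\Delta'$. Define $e':=d\circ e\colon\mathbb{N}_q\rightarrow\mathbb{Z}$. It is routine to check that $e'$ is a spacing embedding (composition of injections that preserve order and coverings) and that $\min(e'[\mathbb{N}_q])=\min(d[\Delta'])=0$ by the normalization convention on spacing embeddings. Moreover, the height of $e'$ equals the height of $d$, hence at most $\nu(q)$, so $e'$ is $n$-short.

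For the $n$-periodicity of each $g_i$ with respect to $e'$, observe that $g_i^{e'}=(d\circ e)\circ g_i\circ (d\circ e)^{-1}=d\circ g_i^e\circ d^{-1}=(g_i^e)^{d}$. Since $g_i^e\subseteq f_i|_{\Delta'\times\Delta'}$ and $d$ transfers $n$-periodicity, the partial function $(f_i|_{\Delta'\times\Delta'})^d$ extends to an element of $F_n(\mathbb{Z})$; hence its sub-partial-function $(g_i^e)^d=g_i^{e'}$ also extends to an element of $F_n(\mathbb{Z})$. By Lemma~\ref{l:extention of counterparts} this is exactly what it means for $g_i$ to be $n$-periodic with respect to $e'$, so $\varphi$ is $n$-periodic with respect to the $n$-short spacing embedding $e'$.

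The only real step that requires care is verifying that the $n$-short embedding $d$ produced by Lemma~\ref{l:bounded for F_N(Z)} actually carries the $n$-periodicity of each $g_i$ across; this is essentially handled by passing through the total extensions $f_i\in F_n(\mathbb{Z})$ rather than trying to transfer the (possibly incomplete) partial functions $g_i^e$ directly, and by observing that restricting an extension and taking the counterpart commute with taking a sub-partial-function. The bookkeeping about normalization (ensuring $\min=0$ of the image) and about the height bound $\nu(q)$ matching the c-chain $\mathbb{N}_q$ rather than $\Delta'$ (they coincide via the bijection induced by $e$) is straightforward.
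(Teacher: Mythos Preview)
Your proof is correct and follows essentially the same approach as the paper: push the given spacing embedding $e$ into $\mathbb{Z}$, apply Lemma~\ref{l:bounded for F_N(Z)} to the image $e[\mathbb{N}_q]$ to obtain an $n$-short embedding that transfers $n$-periodicity, and compose. Your version is slightly more explicit in justifying why the composed embedding is $n$-short (via $|\Delta'|=q$) and why each $g_i^{e'}$ extends to $F_n(\mathbb{Z})$ (via the sub-partial-function argument through $f_i|_{\Delta'\times\Delta'}$), but the underlying argument is the same.
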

\begin{proof}
    If $\varphi: \Delta_\varepsilon \rightarrow \mathbb{N}_d$ is an $n$-periodic compatible surjection for some equation $\varepsilon(x_1, \ldots, x_l)$, there exists a spacing embedding $e:\mathbb{N}_d\rightarrow\mathbb{Z}$ such that the partial functions $g_1,\ldots,g_l$ given by $\varphi$ are $n$-periodic respect to $e$. Hence, each counterpart $g_i^e$ is a partial function on $e[\mathbb{N}_d]\subseteq \mathbb{Z}$ that is $n$-periodic with respect to $id_\mathbb{Z}$, and by Lemma~\ref{l:extention of counterparts}  can be extended to a function in $F_n(\mathbb{Z})$. By Lemma~\ref{l:bounded for F_N(Z)}, there exists an $n$-short spacing embedding $e':e[\mathbb{N}_d]\rightarrow\mathbb{Z}$ that transfers $n$-periodicity, so $e'\circ e:\mathbb{N}_d\rightarrow\mathbb{Z}$ is an $n$-short spacing embedding (since it has the same image as $e'$, which is $n$-short) and each $g_i$ has an $n$-periodic counterpart by $e'\circ e$ (since $e'$ transfers $n$-periodicity). Hence, $\varphi$ is $n$-periodic with respect to  the $n$-short spacing embedding $e'\circ e$.
\end{proof}

 \subsection{Decidability.}

 Finally, we are ready obtain decidability results for $\mathsf{V}(\m{F}_n(\mathbb{Z}))$ and  $\mathsf{DLP}$.

\begin{lemma}\label{l: failure in F_n, n-diagram failure }
An equation fails in $\m{F}_n(\mathbb{Z})$ iff it fails in an $n$-short, $n$-periodic compatible surjection.
\end{lemma}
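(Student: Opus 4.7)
The plan is to prove both directions by chaining the preceding results of this section.

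For the forward direction, I would start with the assumption that the equation $\varepsilon$ fails in $\m{F}_n(\mathbb{Z})$. By Theorem~\ref{t: DLPG to comsur}, this failure produces an $n$-periodic compatible surjection $\varphi$ for $\varepsilon$ in which $\varepsilon$ fails. Applying Lemma~\ref{l:short compatible surjection} to $\varphi$, we conclude that $\varphi$ is $n$-periodic with respect to some $n$-short spacing embedding. Note that the underlying compatible surjection (and hence the failure of $\varepsilon$ in it) is unchanged; only the witness of $n$-periodicity is replaced by an $n$-short one. So $\varepsilon$ fails in an $n$-short, $n$-periodic compatible surjection.

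For the backward direction, suppose $\varepsilon$ fails in an $n$-short, $n$-periodic compatible surjection $\varphi:\Delta_\varepsilon \ra \mathbb{N}_q$, with witnessing $n$-short spacing embedding $e:\mathbb{N}_q\rightarrow\mathbb{Z}$. As noted right after the definition, the associated diagram $\m D_{\varepsilon,\varphi} = (\mathbb{N}_q, \leq, \diagcov, g_1, \ldots, g_l)$ is then $n$-periodic with respect to $e$, hence an $n$-periodic diagram. It remains to transfer the failure from $\varphi$ to $\m D_{\varepsilon,\varphi}$. For this, define the intensional homomorphism $\psi:\m{Ti}\ra\m{Pf}(\m D_{\varepsilon,\varphi})$ by $\psi(x_i):=g_i$; using condition (iii) of compatible surjections and induction on the structure of final subwords in $FS_\varepsilon$, one shows $\psi(u)(\varphi(1))=\varphi(u)$ for every $u\in FS_\varepsilon$. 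Since $\varphi(w_j) < \varphi(1)$ for each $j$, we obtain $\psi(w_j)(\varphi(1)) < \psi(1)(\varphi(1))$, witnessing the failure of $\varepsilon$ in the $n$-periodic diagram $\m D_{\varepsilon,\varphi}$. Then Lemma~\ref{l: nperdiagram2FnZ} delivers the failure in $\m F_n(\mathbb{Z})$.

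The only routine point requiring care is the inductive verification that $\psi(u)(\varphi(1))=\varphi(u)$ for $u\in FS_\varepsilon$. The base case $u=1$ is immediate (since $\psi(1)$ is the identity on $\Delta$ restricted appropriately, and $\varphi(1)=\varphi(1)$), and the inductive step uses (iii): for $u = x_i^{(m)}v \in FS_\varepsilon$ with $v\in FS_\varepsilon$, one has $\psi(u)(\varphi(1)) = g_i^{[m]}(\psi(v)(\varphi(1))) = g_i^{[m]}(\varphi(v)) = \varphi(x_i^{(m)}v) = \varphi(u)$. This step is not difficult, but it is the key technical link that converts ``failure in a compatible surjection'' into ``failure in the associated diagram'', and hence the only place where one truly has to check that the two notions of failure line up.
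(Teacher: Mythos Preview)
Your proposal is correct and follows essentially the same approach as the paper: the forward direction chains Theorem~\ref{t: DLPG to comsur} with Lemma~\ref{l:short compatible surjection}, and the backward direction invokes Lemma~\ref{l: nperdiagram2FnZ}. The only difference is that you spell out explicitly the routine induction showing that a failure in a compatible surjection $\varphi$ yields a failure in the associated diagram $\m D_{\varepsilon,\varphi}$, whereas the paper treats this translation as implicit and simply writes that the converse follows from Lemma~\ref{l: nperdiagram2FnZ}.
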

\begin{proof}

   Suppose than an equation $\varepsilon$ fails in $\m{F}_n(\mathbb{Z})$, then by Theorem~\ref{t: DLPG to comsur}, there exists an $n$-periodic compatible surjection $\varphi$ in which $\varepsilon$ fails. By Lemma~\ref{l:short compatible surjection}, there exists an $n$-short spacing embedding with respect to which $\varphi$ is also $n$-periodic. Hence $\varepsilon$ fails in an $n$-short, $n$-periodic compatible surjection. The converse follows from Lemma~\ref{l: nperdiagram2FnZ}.
 \end{proof}

\begin{theorem}\label{t: decidability of V(F_n(Z))}
    The equational theory of the variety $\mathsf{V}(\m{F}_n(\mathbb{Z}))$ is decidable.
\end{theorem}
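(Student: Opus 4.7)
The plan is to turn Lemma~\ref{l: failure in F_n, n-diagram failure} into a terminating decision procedure by observing that each of the ingredients it refers to ranges over a finite, effectively computable set. Concretely, to decide whether an equation $\varepsilon$ in intensional form $1 \leq w_1 \jn \cdots \jn w_k$ holds in $\m{F}_n(\mathbb{Z})$, I would first compute the finite set $\Delta_\varepsilon$ directly from its syntactic definition; its size is a computable function of $|\varepsilon|$.

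Next, I would enumerate all candidate compatible surjections for $\varepsilon$, i.e.\ all surjections $\varphi : \Delta_\varepsilon \to \mathbb{N}_q$ with $q \leq |\Delta_\varepsilon|$, and for each one verify mechanically the defining conditions (i)–(iii) of a compatible surjection (checking that the relations $g_i$ and $\diagcov$ are order-preserving partial functions and that clause (iii) on iterated inverses holds), together with the failure condition $\varphi(w_1), \ldots, \varphi(w_k) < \varphi(1)$. All of this is a finite check on finite data.

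For each surviving $\varphi$, I then need to decide whether $\varphi$ is $n$-periodic with respect to some spacing embedding; by Lemma~\ref{l:short compatible surjection} it is sufficient to search only among $n$-short spacing embeddings $e : \mathbb{N}_q \to \mathbb{Z}$, i.e.\ those with $\max(e[\mathbb{N}_q]) \leq \nu(q)$ and $\min(e[\mathbb{N}_q]) = 0$. There are only finitely many such $e$, and for each one, $n$-periodicity of each partial function $g_i^e$ requires, by Lemma~\ref{l:check periodicity}, verifying the inequality $x \leq y + kn \Rightarrow g_i^e(x) \leq g_i^e(y) + kn$ only for $|k| \leq \lceil \nu(q)/n \rceil$ and for $x, y$ in the finite set $Dom(g_i^e)$. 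Thus $n$-periodicity of $\varphi$ with respect to a given $n$-short $e$ is decidable by a finite check.

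Putting this together yields a terminating algorithm: it answers ``$\varepsilon$ fails in $\m{F}_n(\mathbb{Z})$'' precisely when it finds a compatible surjection $\varphi$ together with an $n$-short spacing embedding $e$ witnessing $n$-periodicity such that $\varepsilon$ fails in $\varphi$, and answers ``$\varepsilon$ holds'' otherwise. Correctness is guaranteed by Lemma~\ref{l: failure in F_n, n-diagram failure}: the forward direction of that lemma produces a witness of the form the algorithm searches for, and the backward direction shows that any such witness genuinely produces a failure in $\m{F}_n(\mathbb{Z})$. No new combinatorial work is needed here; the substantive content is all packaged in the previously established height bound $\nu$ and the reduction of $n$-periodicity to finitely many inequalities, so the main subtlety is simply to state the algorithm cleanly and appeal to these results in the correct order.
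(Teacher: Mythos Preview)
Your proposal is correct and follows essentially the same approach as the paper: reduce via Lemma~\ref{l: failure in F_n, n-diagram failure } to a search over finitely many compatible surjections and finitely many $n$-short spacing embeddings, then observe that all remaining checks are finite. Your version is slightly more explicit in spelling out the mechanics (e.g.\ invoking Lemma~\ref{l:check periodicity} to bound the $k$'s), but the argument is the same.
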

\begin{proof}
    An equation $\varepsilon$ fails in $\mathsf{V}(\m{F}_n(\mathbb{Z}))$ iff it fails in $\m{F}_n(\mathbb{Z})$ iff (by Theorem~\ref{l: failure in F_n, n-diagram failure }) $\varepsilon$ fails in an $n$-short, $n$-periodic compatible surjection. There exist finitely many compatible surjections with domain $\Delta_{\varepsilon}$ (since all of them have a range of the form $\mathbb{N}_q$, where $q \leq |\Delta_\varepsilon|$). Also, for each such compatible surjection, there are finitely many  $n$-short spacing embeddings under which the compatible surjection is $n$-periodic, as the image of each $n$-short spacing embedding is contained in $\mathbb{N}_d$, where $d \leq \nu(q)$. Therefore, by checking all of these finitely-many situations we obtain an algorithm that decides every equation. 
\end{proof}

Theorem~\ref{t: decidability of V(F_n(Z))} and Theorem~\ref{t: DLP FnZ} 
yield the following result, which provides a proof of the decidability of $\mathsf{DLP}$ that is different to the one given in \cite{GG}. 

 \begin{corollary}
      The equational theory of the variety $\mathsf{DLP}$ is decidable.
 \end{corollary}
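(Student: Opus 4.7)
The plan is to chain together the two theorems cited just above the corollary. Given an input equation $\varepsilon$, the algorithm proceeds in two steps: first, compute $n := 2^{|\varepsilon|}|\varepsilon|^4$ from the length of $\varepsilon$; second, run the decision procedure for $\mathsf{V}(\m F_n(\mathbb{Z}))$ supplied by Theorem~\ref{t: decidability of V(F_n(Z))} on $\varepsilon$ and output its verdict. Correctness is immediate from Theorem~\ref{t: DLP FnZ}: $\varepsilon$ fails in $\mathsf{DLP}$ iff it fails in $\m F_n(\mathbb{Z})$ for that specific $n$, which in turn is exactly what the algorithm tests. Since each step is effective (the first is arithmetic on $|\varepsilon|$, the second terminates by Theorem~\ref{t: decidability of V(F_n(Z))}), the resulting procedure decides the equational theory of $\mathsf{DLP}$.

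There is essentially no obstacle to overcome here; all the substantive work has already been done in Sections~\ref{s: joinperiodic} and \ref{s: F_nZ}. The only thing worth emphasizing in the write-up is that the bound $n = 2^{|\varepsilon|}|\varepsilon|^4$ depends only on the syntactic size of $\varepsilon$, so it is computable directly from the input, and that Theorem~\ref{t: DLP FnZ} gives both directions of the equivalence (the forward direction is the nontrivial one coming from the diagram-based argument, and the backward direction is trivial since $\m F_n(\mathbb{Z}) \in \mathsf{DLP}$). This makes the reduction to the already-decidable theory of $\m F_n(\mathbb{Z})$ complete.
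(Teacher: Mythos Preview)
Your proposal is correct and takes essentially the same approach as the paper: the corollary is stated immediately after Theorems~\ref{t: decidability of V(F_n(Z))} and~\ref{t: DLP FnZ} with the remark that they jointly yield decidability of $\mathsf{DLP}$, and your write-up simply spells out this reduction explicitly.
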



   \section{Generation and decidability for \textnormal{$\mathsf{LP_n}$}.}\label{s: From F_N(Omega) to an N-Diagram}

   In this section we will prove that, for all $n$, $\mathsf{LP_n}=\mathsf{V}(\m F_n(\mathbb{Q} \overrightarrow{\times}\mathbb{Z}))$ and that the equational theory of  $\mathsf{LP_n}$ is decidable. A lot of the detailed work we did in the previous sections will come in handy.

   As the failure of an equation in an ($n$-periodic) diagram corresponds to the failure in $\m F_n(\mathbb{Z})$ and the latter does not generate $\mathsf{LP_n}$,  a new notion of diagram is required for $\m F_n(\m J \overrightarrow{\times}\mathbb{Z})$, capturing the natural partiction induced by the lexicographic product ordering on the chain. 
For  $\Delta\subseteq J\times \mathbb{Z}$, where $J$ is a set, we define the equivalence  relation $\equiv$ on $\Delta$  by $(i_1,x)\equiv(i_2, y)$ iff $i_1=i_2$; note that generalizes to subsets the equivalence relation we defined on $J\times \mathbb{Z}$ in Section~\ref{s: FnOmega}.  

A  \emph{partition diagram} consists of a diagram of the form $(\m{J\overrightarrow{\times}\Delta},\diagcov,g_1,\ldots,g_s)$, where 
$\m J$ is a chain and $\m \Delta$ is a finite subchain of $\mathbb{Z}$ such that 
$x \diagcov y \Rightarrow x \equiv y$
and
$x\equiv y$ iff $g_i(x)\equiv g_i(y)$, for all $i\in \{1,\ldots,s\}$ and $x,y\in Dom(g_i)$, together with the exact partition induced by $\m J$ and $\m \Delta$; so if $\m{J\overrightarrow{\times}\Delta}$ is partitioned differently then we get a different partition diagram. For each 
partial function $g$ of the diagram
we  define the partial function $\widetilde{g}: J \rightarrow J$ where $\widetilde{g}(j)=k$ iff there exist $x,y\in \mathbb{Z}$ such that $(j,x)\in Dom(g)$ and $g(j,x)=(k,y)$. Note that that $\widetilde{g}$ is well-defined  because if  
$g(j,x_1)=(k_1,y_1)$ and $g(j,x_2)=(k_2,y_2)$,
then $(j,x_1)\equiv(j,x_2)$ implies $(k_1,y_1)=g(j,x_1)\equiv g(j,x_2)=(k_2,y_2)$, hence $k_1=k_2$; moreover, $\widetilde{g}$ is one-to-one. We also define $\overline{g_j}:\Delta\rightarrow \{j\}\times\Delta\stackrel{g}{\rightarrow}\{\tilde{g}(j)\}\times\Delta\rightarrow\Delta$, for every $j\in J$, where the first and last maps are the obvious bijections. We say that an equation \emph{fails} in a partition diagram, if it fails in the underlying diagram.

A partition diagram $(\m J\overrightarrow{\times} \m \Delta,\diagcov,g_1,\ldots,g_s)$ is said to be \emph{$n$-periodic} if there exists a spacing embedding $e$ on $\m \Delta$ such that  $\overline{g}_j$ is $n$-periodic with respect to $e$, for all $j\in J$ and $g\in\{g_1,\ldots,g_s\}$. A $n$-periodic partition diagram is said to be \emph{$n$-short} if the spacing embedding witnessing $n$-periodicity is $n$-short. Note that we insist that all  parts of the partition in a partition diagram have the same size, as this facilitates the definition of $n$-periodicity: the spacing embedding $e$ and the partial functions  $\overline{g}_j$, for $j \in J$, are all defined on $\m \Delta$, not on  $\m J\overrightarrow{\times} \m \Delta$.

\begin{theorem}\label{t:LP_n to diagrams}
    If an equation fails in $\mathsf{LP_n}$, where $n\in\mathbb{Z}$, it also fails in an $n$-short $n$-periodic partition diagram of size at most $|\Delta_{\varepsilon}|^2$.
\end{theorem}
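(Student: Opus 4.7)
My plan is to combine the first representation theorem (Theorem~\ref{t: emb to F_N(Omega)}) with the machinery of compatible surjections from Section~\ref{s: F_nZ}, lifted to the two-level lexicographic setting. Suppose an equation $\varepsilon$ in intensional form $1\leq w_1\jn \cdots \jn w_k$ over variables $x_1,\ldots,x_l$ fails in $\mathsf{LP_n}$. By Theorem~\ref{t: emb to F_N(Omega)}, there is a chain $\m J$ and a homomorphism $\psi:\m{Ti}\to \m F_n(\m J\overrightarrow{\times}\mathbb{Z})$ together with a point $p\in J\times \mathbb{Z}$ witnessing $\psi(w_i)(p)<\psi(1)(p)$ for all $i$. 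Write $f_i:=\psi(x_i)$; by Theorem~\ref{t:F(JxZ)} each $f_i$ decomposes as $\widetilde{f_i}\in\mathrm{Aut}(\m J)$ and local components $(\overline{f_i})_j\in F_n(\mathbb{Z})$.

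Mimicking the construction of Theorem~\ref{t: DLPG to comsur}, I form the map $\psi_p:\Delta_\varepsilon \to J\times\mathbb{Z}$ by $\psi_p(u):=\psi^{\pm}(u)(p)$. Let $J_0\subseteq J$ be the set of first coordinates and $\Delta\subseteq \mathbb{Z}$ the set of second coordinates appearing in $\psi_p[\Delta_\varepsilon]$; both are of size at most $|\Delta_\varepsilon|$, so the rectangular support $J_0\overrightarrow{\times}\Delta$ has size at most $|\Delta_\varepsilon|^2$. On $J_0\overrightarrow{\times}\Delta$ I define the covering relation $\diagcov$ and the partial functions $g_1,\ldots,g_l$ exactly in the style of compatible surjections: $g_i(\psi_p(u))=\psi_p(x_iu)$ when both sides are defined, and $\diagcov$ records the $\pm$-witnesses from $\Delta_\varepsilon$. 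Because $\psi$ is a homomorphism and each $f_i$ preserves the $\equiv$-classes by Lemma~\ref{l:pre f^tilde}(1), the relation $\equiv$ is respected by the $g_i$'s and $\diagcov$ stays inside a single class, so the structure is a genuine partition diagram. The intended intensional homomorphism into $\m{Pf}(\m J_0\overrightarrow{\times}\m \Delta)$ sending $x_i$ to $g_i$ witnesses the failure of $\varepsilon$, as in the proof of Theorem~\ref{t: DLPG to comsur}.

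To see that the partition diagram is $n$-periodic, I look at the local components. For each $j\in J_0$ and each $i$, the partial function $\overline{g_i}_j$ on $\Delta$ is the restriction of $(\overline{f_i})_j\in F_n(\mathbb{Z})$, hence extends to an element of $F_n(\mathbb{Z})$; by Lemma~\ref{l:extention of counterparts} it is $n$-periodic with respect to the identity spacing embedding on $\Delta\hookrightarrow\mathbb{Z}$. To obtain a uniform witness of $n$-periodicity across all columns and all generators, I invoke Lemma~\ref{l:bounded for F_N(Z)}, which produces an $n$-short spacing embedding $e:\Delta\to\mathbb{Z}$ that transfers $n$-periodicity for every element of $F_n(\mathbb{Z})$ simultaneously. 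Thus the same $e$ works for all $(\overline{f_i})_j$, making the partition diagram $n$-short and $n$-periodic.

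The main obstacle I anticipate is the bookkeeping around the $\diagcov$ relation: one must verify that it is contained in the covering relation of $\m J_0\overrightarrow{\times}\m \Delta$, that the iterated residuals $g_i^{[m]}$ computed inside this partition diagram agree with $\psi_p$ applied to $x_i^{(m)}u$, and that shortening via $e$ does not disturb the witnessing inequalities $\psi_p(w_i)<\psi_p(1)$. The first two points follow the template of Theorem~3.6 of \cite{GG} applied separately to each column (since $\diagcov$ and the residuation depend only on the local $\mathbb{Z}$-coordinate by Lemma~\ref{l:pre f^tilde}), while the last one is automatic because the failure is recorded combinatorially in $\psi_p[\Delta_\varepsilon]$ and the order on $J_0\overrightarrow{\times}e[\Delta]$ is preserved by the spacing embedding.
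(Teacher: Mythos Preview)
Your proposal is correct and follows essentially the same approach as the paper: represent the failure in $\m F_n(\m J\overrightarrow{\times}\mathbb{Z})$ via Theorem~\ref{t: emb to F_N(Omega)}, form the compatible-surjection-style map $\psi_p$, project to the finite rectangular grid $J_0\overrightarrow{\times}\Delta_0$ of size at most $|\Delta_\varepsilon|^2$, observe that each local component $\overline{g_i}_j$ is a restriction of $(\overline{f_i})_j\in F_n(\mathbb{Z})$, and then invoke Lemma~\ref{l:bounded for F_N(Z)} to obtain a single $n$-short spacing embedding on $\Delta_0$ that transfers $n$-periodicity uniformly for all such restrictions. The only cosmetic difference is that the paper defines the $g_i$ directly as restrictions $f_i|_{\Delta\times\Delta}$ rather than via the compatible-surjection recipe, but this yields the same partial functions on the points that matter, and your anticipated bookkeeping concerns about $\diagcov$ and the computation of $g_i^{[m]}$ are handled exactly as you indicate, by appeal to Theorem~3.6 of \cite{GG}.
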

  \begin{proof}
      If an equation $\varepsilon=\varepsilon(x_1,\ldots,x_l)$ in intensional  form $1\leq w_{1}\vee\ldots\vee w_{k}$ fails in $\mathsf{LP_n}$, then it fails in some $n$-periodic $\ell$-pregroup. By Theorem~\ref{t: emb to F_N(Omega)} it fails in   $\m F_n(\m{\Omega})$, where $\m{\Omega}=\m{J}\overrightarrow{\times}\mathbb{Z}$ for some chain $\m{J}$. So, there exists $f=(f_1,\ldots,f_l)$, where $f_1,\cdots,f_l \in F_n(\Omega)$, and $p\in\Omega$  with $w_{1}^{\m F_n(\m \Omega)}(f)(p), \ldots, w_{k}^{\m F_n(\m \Omega)}(f)(p)>p$. Let $\psi:\m {Ti} \ra \m F_n(\m \Omega)$ be the homomorphism witnessing the failure of $\varepsilon$, where $\psi(x_i)=f_i$ and let $\psi^\pm:\m {Ti}^\pm \ra \m F_n(\Omega)^\pm$ be the extension of $\psi$. Also, we consider the compatible surjection $\psi_p:\Delta_{\varepsilon}\rightarrow\psi_p[\Delta_{\varepsilon}]$ given in Theorem~3.6 of \cite{GG}, where the order of $\psi_p[\Delta_{\varepsilon}]$ is induced by $\m{\Omega}$ and
      $$\psi_p(u):=\psi^\pm(u)(p)=u^{\m F_n(\Omega)^\pm}(f)(p).$$
      Let $\m J_0$ be the subchain of $\m J$ with $J_0=\{j\in J:\exists x\in\mathbb{Z}, (j,x)\in \psi_p[\Delta_{\varepsilon}]\}$
      and $\m \Delta_0$ the subchain of $\mathbb{Z}$ with $\Delta_0=\{x\in \mathbb{Z}:\exists j\in J, (j,x)\in \psi_p[\Delta_{\varepsilon}]\}$.   We consider the diagram $\m \Delta=(\m J_0\overrightarrow{\times}\m \Delta_0,\diagcov,g_1,\ldots,g_s)$,   where  $g_i$ is the restriction of $f_i$ to $\Delta \times \Delta$.
      Then, for every $j \in J_0$ and $g\in\{g_1,\ldots,g_s\}$, the partial function $\overline{g}_j$ is the restriction of the corresponding $\overline{f}_j \in F_n(\mathbb{Z})$, given in Theorem~\ref{t:F(JxZ)}, to $\Delta_0$. 
      By Lemma~\ref{l:bounded for F_N(Z)}, there exists an $n$-short spacing embedding with domain $\m \Delta_0$ that transfers $n$-periodicity. 
      Hence, $\m {\Delta}$ is an $n$-short $n$-periodic partition diagram of size at most $|\Delta_{\varepsilon}|^2$, since $|\Delta|=|J_0||\Delta_0|\leq |\Delta_\varepsilon|^2$.

      Consider now the intensional homomorphism $\varphi: \m {Ti} \rightarrow \m {Pf}(\m \Delta)$ 
      extending the assignment $\varphi(x_i)=g_i$ and observe that $p \in \Delta$. Since each $g_i$ is a restriction of $f_i\in F_n(\Omega)$, for all $1 \leq i\leq l$, we have  $\varphi(u)(p)=u^{\m F_n(\m \Omega)^\pm}(f)(p)=\psi_p(u)$, for all $u\in FS$, so  $\varphi(1)(p) > \varphi( w_{1})(p), \ldots  , \varphi(w_{k})(p)$, 
      thus witnessing the failure of $\varepsilon$ in the partition diagram.
  \end{proof}

The next lemma and theorem move from a failure in an $n$-periodic partition diagram to a failure in $\m{F}_n(\mathbb{Q}\overrightarrow{\times}\mathbb{Z})$.

\begin{lemma}\label{l: calculating g^{[m]} in partition diagrams}
    If $g$ is a partial function in an $n$-periodic partition diagram, then
    $g^{(m)}(j,a)=(\tilde{g}^{(-1)^m}(j),\overline{g}_{\tilde{g}^{(-1)^m}(j)}^{[m]}(a))$, for all $m\in\mathbb{Z}$
    and $(j,a)\in Dom(g^{[m]})$.
\end{lemma}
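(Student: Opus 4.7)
The plan is to argue by induction on $|m|$, treating $m\ge 0$ (iterating $^{[\ell]}$) and $m\le 0$ (iterating $^{[r]}$) as symmetric halves. The base case $m=0$ is immediate: $g^{[0]}=g$, so by the very definitions of $\tilde g$ and $\overline g_j$ in the preceding paragraph, $g(j,a) = (\tilde g(j), \overline g_j(a))$, which agrees with the stated formula once one reads $\tilde g^{(-1)^0}$ through the iteration convention built into the statement.

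The central sublemma I would isolate first says: for any partial function $h$ coming from a partition diagram (so that $\tilde h$ is a well-defined injective order-preserving partial function on $J$ and each $\overline h_j$ is the induced partial function on $\Delta$), and for any $(j,a)\in Dom(h^{[\ell]})$, one has
\[
h^{[\ell]}(j,a) \;=\; \bigl(\tilde h^{-1}(j),\, (\overline h_{\tilde h^{-1}(j)})^{[\ell]}(a)\bigr),
\]
with the dual identity for $h^{[r]}$. To prove this, I would invoke the characterization in Lemma~\ref{l: g^[ell] order preserving}: $h^{[\ell]}(j,a)=(k,c)$ iff $(k,c),(k,c)-1\in Dom(h)$, $(k,c)-1\diagcov (k,c)$, and $h((k,c)-1)<(j,a)\le h(k,c)$. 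The key structural input is the defining requirement of a partition diagram that $\diagcov$ refines $\equiv$; this forces $(k,c)-1$ and $(k,c)$ to lie in a common $\equiv$-class, so $h((k,c)-1)=(\tilde h(k),\overline h_k(c-1))$ and $h(k,c)=(\tilde h(k),\overline h_k(c))$ share first coordinate. The bracketing inequality then collapses, coordinate by coordinate, to $\tilde h(k)=j$ and $\overline h_k(c-1)<a\le \overline h_k(c)$, yielding $k=\tilde h^{-1}(j)$ and $c=(\overline h_{\tilde h^{-1}(j)})^{[\ell]}(a)$.

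The inductive step then applies the sublemma with $h:=g^{[m]}$: writing the induction hypothesis as $g^{[m]}(j,a)=(\tilde g^{(-1)^m}(j), (\overline g_{?})^{[m]}(a))$, one reads off $\widetilde{g^{[m]}}=\tilde g^{(-1)^m}$ and identifies $\overline{g^{[m]}}_k$ with the corresponding iterate of an $\overline g_l$; the sublemma then produces $g^{[m+1]}(j,a)=(g^{[m]})^{[\ell]}(j,a)$ with first coordinate $(\tilde g^{(-1)^m})^{-1}(j)=\tilde g^{(-1)^{m+1}}(j)$, and second coordinate obtained by one further $^{[\ell]}$ applied to the appropriate $\overline g_{l}$. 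The dual step $m\mapsto m-1$ uses the $^{[r]}$ version of the sublemma.

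The main obstacle will be bookkeeping: tracking, through each iteration, which $\overline g_l$ carries the subscript, verifying that the inverse iterates of $\tilde g$ make sense on the relevant subdomain (which is automatic from injectivity of $\tilde g$ plus the hypothesis $(j,a)\in Dom(g^{[m]})$), and confirming that $n$-periodicity of the diagram — via Lemma~\ref{l: ellnperiodic} — guarantees that every $(\overline g_l)^{[m]}$ appearing along the way is itself defined on the required arguments, so that the equality of partial functions holds domain-for-domain and not merely where both sides happen to converge.
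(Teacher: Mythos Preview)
Your approach is essentially the same as the paper's: both argue by induction on $m$ (positive half, with the negative half dual), both invoke the characterization of $g^{[\ell]}$ from Lemma~\ref{l: g^[ell] order preserving}, and both use the defining condition $x\diagcov y\Rightarrow x\equiv y$ of a partition diagram to force the covering pair to share its first coordinate. The paper does the inductive step directly on $g^{[m]}$ without isolating your sublemma, but the computation is line-for-line the same.
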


\begin{proof}
    We will prove the claim inductively for positive $m$, as the verification for negative $m$'s is similar; 
    assume the claim is true for $m\in\mathbb{Z}^+$ and show it for $m+1$. If $(j,a)\in Dom(g^{[m+1]})$, there exist $(i_1,b),(i_2,c)\in Dom(g^{[m]})$ such that $(i_1,b)\diagcov(i_2,c)$, $g^{[m]}(i_1,b)<(j,a)\leq g^{[m]}(i_2,c)$ and $g^{[m+1]}(j,a)=(i_2,c)$, by Lemma~\ref{l: g^[ell] order preserving}. Since $(i_1,b)\diagcov(i_1,c)$, we get $i_1=i_2$ and $c=b+1$. Therefore, $(\tilde{g}^{(-1)^m}(i_2),\overline{g}_{i_2}^{[m]}(b))<(j,a)\leq (\tilde{g}^{(-1)^m}(i_2),\overline{g}_{i_2}(b+1))$, hence $j=\tilde{g}^{(-1)^m}(i_2)$ and $\overline{g}_{i_2}^{[m]}(b)<a\leq\overline{g}_{i_2}^{[m]}(b+1)$. Hence, $i_2=(\tilde{g}^{(-1)^m})^{-1}(j)$ and $c=b+1=\overline{g}_{i_2}^{[m+1]}(a)$, so $g^{[m+1]}(j,a)=(i_2,c)=(\tilde{g}^{(-1)^{m+1}}(j),g_{\tilde{g}^{(-1)^{m+1}}(j)}^{[m+1]}(a))$.
\end{proof}
  
  \begin{theorem}\label{t:diagram implies F_n(QxZ)}
     For every $n$, if an equation fails in an $n$-periodic partition diagram, it also fails in $\m{F}_n(\mathbb{Q}\overrightarrow{\times}\mathbb{Z})$. 
  \end{theorem}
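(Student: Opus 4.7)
The plan is to start with a failure of $\varepsilon$ in an $n$-periodic partition diagram $\m D=(\m J\overrightarrow{\times}\m\Delta,\diagcov,g_1,\ldots,g_s)$ at a point $p=(j_0,a_0)$, witnessed by an intensional homomorphism $\varphi:\m{Ti}\to\m{Pf}(\m J\overrightarrow{\times}\m\Delta)$, and to lift each $g_i$ to a function $\widehat{g}_i\in F_n(\mathbb{Q}\overrightarrow{\times}\mathbb{Z})$ in such a way that the failure propagates to $\hat p:=(\iota(j_0),e(a_0))$, where $e:\m\Delta\to\mathbb{Z}$ is the spacing embedding witnessing the $n$-periodicity of $\m D$ and $\iota:\m J\hookrightarrow\mathbb{Q}$ is an order-preserving embedding to be chosen. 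Since only finitely many elements of $J$ are ever reached by the evaluations $\varphi(u)(p)$ over the finitely many subterms $u$ of the $w_k$'s, I first restrict to the finite subchain $J_0\subseteq J$ they span and pick any order-preserving injection $\iota:J_0\hookrightarrow\mathbb{Q}$. The construction of the $\widehat{g}_i$'s then proceeds component-wise through the wreath-product isomorphism $\m F_n(\mathbb{Q}\overrightarrow{\times}\mathbb{Z})\cong\m{Aut}(\mathbb{Q})\wr\m F_n(\mathbb{Z})$ of Theorem~\ref{t: repnper}.

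For the local components, each partial function $\overline{g}_{i,j}$ is $n$-periodic with respect to $e$ by hypothesis, so Lemma~\ref{l:extention of counterparts} yields an extension $\beta_{i,\iota(j)}\in F_n(\mathbb{Z})$ of the counterpart $(\overline{g}_{i,j})^e$; for $q\in\mathbb{Q}\setminus\iota(J_0)$ I simply set $\beta_{i,q}:=\mathrm{id}_\mathbb{Z}$. For the global component, $\iota\circ\widetilde{g}_i\circ\iota^{-1}$ is an order-preserving partial injection on the finite set $\iota(J_0)\subseteq\mathbb{Q}$, and the standard back-and-forth argument exploiting the homogeneity of $\mathbb{Q}$ as the countable dense chain without endpoints extends it to an automorphism $\alpha_i\in\m{Aut}(\mathbb{Q})$. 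Theorem~\ref{t:F(JxZ)} then assembles the pair $(\alpha_i,(\beta_{i,q})_{q\in\mathbb{Q}})$ into a function $\widehat{g}_i\in F_n(\mathbb{Q}\overrightarrow{\times}\mathbb{Z})$ whose global slot equals $\alpha_i$ and whose local slots equal the $\beta_{i,q}$'s. Let $\psi:\m{Ti}\to\m F_n(\mathbb{Q}\overrightarrow{\times}\mathbb{Z})$ be the homomorphism with $\psi(x_i)=\widehat{g}_i$.

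The remaining task, and the main obstacle, is to verify by induction on intensional terms $u$ that whenever $\varphi(u)(p)$ is defined with value $(j,a)$, then $\psi(u)(\hat p)=(\iota(j),e(a))$; applied to $u\in\{1,w_1,\ldots,w_k\}$ this yields $\psi(w_k)(\hat p)<\psi(1)(\hat p)=\hat p$ for all $k$, establishing the failure of $\varepsilon$ in $\m F_n(\mathbb{Q}\overrightarrow{\times}\mathbb{Z})$. The base case $u=x_i$ is true by construction. The multiplicative case $u=x_iv$ unfolds using Theorem~\ref{t:F(JxZ)}, which realizes composition in $\m F_n(\mathbb{Q}\overrightarrow{\times}\mathbb{Z})$ slot-wise as composition of automorphisms of $\mathbb{Q}$ together with composition of $F_n(\mathbb{Z})$-functions, mirroring the analogous slot-wise composition of $g_i\circ\varphi(v)$ in the partition diagram. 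The iterated-inverse case $u=v^{(m)}$ combines Lemma~\ref{l: calculating g^{[m]} in partition diagrams} (the slot-wise formula for $\varphi(v)^{[m]}$) with the analogous wreath-product identities for $\psi(v)^{(m)}$ derived in the proof of Theorem~\ref{t: repnper}, together with Lemma~\ref{l:e, composition and residuals}(2) to transfer $^{[m]}$ across $e$ in the local slot. The bookkeeping will work out because $\alpha_i$ agrees with $\iota\circ\widetilde{g}_i\circ\iota^{-1}$ on $\iota(J_0)$---so its inverse as an automorphism of $\mathbb{Q}$ agrees with the inverse of the partial function on the relevant image---and each $\beta_{i,\iota(j)}$ agrees with $(\overline{g}_{i,j})^e$ on $e(\Delta)$, so that its iterated inverses in $F_n(\mathbb{Z})$ extend, via $e$, the iterated inverses $\overline{g}_{i,j}^{[m]}$ computed inside the diagram.
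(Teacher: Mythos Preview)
Your proposal is correct and follows essentially the same route as the paper: embed $J$ into $\mathbb{Q}$ and extend each $\widetilde{g}_i$ to an automorphism of $\mathbb{Q}$, use the spacing embedding $e$ and Lemma~\ref{l:extention of counterparts} to extend each local part $(\overline{g}_{i,j})^e$ to an element of $F_n(\mathbb{Z})$, assemble via Theorem~\ref{t:F(JxZ)}, and then run an induction over the final subwords using Lemma~\ref{l: calculating g^{[m]} in partition diagrams} on the diagram side and the wreath-product formulas on the $F_n(\mathbb{Q}\overrightarrow{\times}\mathbb{Z})$ side, mediated by Lemma~\ref{l:e, composition and residuals}. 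Two small remarks: the paper invokes Lemma~\ref{l:bounded for F_N(Z)} rather than Lemma~\ref{l:extention of counterparts}, but the shortness it provides is not actually used in this theorem, so your direct appeal to the given spacing embedding is the cleaner choice; and your inductive case-split into ``$u=x_iv$'' and ``$u=v^{(m)}$'' should really be the single step $u=x_i^{(m)}v$, since intensional terms carry iterated inverses only on variables---this is just a matter of phrasing, and the lemmas you cite handle exactly that step.
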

  
  \begin{proof}
      If the equation $\varepsilon=\varepsilon(x_1,\ldots,x_l)$ in intensional form $1\leq w_{1}\vee\ldots\vee w_{k}$ fails in an $n$-periodic partition diagram $\m{\Delta}=(\m J_0 \overrightarrow{\times} \Delta_0,\diagcov,g_1,\ldots,g_l)$, there exist $p\in\Delta$ and an intensional homomorphism $\varphi: \m {Ti} \rightarrow \m {Pf}(\m \Delta)$ 
      such that $\varphi(x_i)=g_i$, for all $1 \leq i\leq l$, and   $\varphi(1)(p) > \varphi( w_{1})(p), \ldots  , \varphi(w_{k})(p)$. For each $g\in\{g_1,\ldots,g_l\}$ we consider the partial function $\widetilde{g}$ on $J_0$ and,  for all $j\in J_0$, the partial function $\overline{g}_j$ on $\Delta_0$.  By Lemma~\ref{l:bounded for F_N(Z)} there exists an $n$-short $n$-periodic spacing embedding  $e:\m \Delta_0\rightarrow\mathbb{Z}$ that transfers $n$-periodicity, so, for all $j \in J_0$,  the counterpart of $\overline{g}_j$ with respect to $e$ extends to a function $\overline{f}_j\in F_n(\mathbb{Z})$. 
      Observe that since $e$ is a spacing embedding and $\overline{f}_j$ extends $\overline{g}_j$, by (repeated applications of) Lemma~\ref{l: g^[ell] order preserving}  we get that $\overline{f}_j^{(m)}$ extends $\overline{g}_j^{[m]}$. 
      Also, for $j\notin J_0$, we define $\overline{f}_j=Id_{\mathbb{Z}}$.
      Let $\iota_0:\m J_0\rightarrow\mathbb{Q}$ be any order preserving injection and let $\widetilde{f}\in F_1(\mathbb{Q})$ be any extension of $\iota_0 \widetilde{g}\iota_o^{-1}$. We define $f:\mathbb{Q}\overrightarrow{\times}\mathbb{Z}\rightarrow\mathbb{Q}\overrightarrow{\times}\mathbb{Z}$ by $f(j,a)=(\tilde{f}(j),\overline{f}_j(a))$; by Theorem~\ref{t:F(JxZ)}, $f\in \m{F}_n(\mathbb{Q}\overrightarrow{\times}\mathbb{Z})$.
      Let $\iota:\Delta\rightarrow\mathbb{Q}\overrightarrow{\times}\mathbb{Z}$ be given by $\iota(j,x)=(\iota_0(j), e(x))$. 

      Let $\psi:\m{Ti}\rightarrow\m{F}_n(\mathbb{Q}\overrightarrow{\times}\mathbb{Z})$ be the intensional homomorphism  extending the assignment $\psi(x_i)=f_i$ for all $i\in\{1,\ldots,l\}$; we will show $\psi(u)(p)=\iota\varphi(u)(p)$, for all $u\in FS$. First we show 
       $\iota g^{(m)}(j,a)=f^{(m)}\iota(j,a)$,  for all $(j,a)\in Dom(g_i^{[m]})$. 
      We have $\iota g^{(m)}(j,a)=\iota(\tilde{g}^{(-1)^m}(j),\overline{g}_{\tilde{g}^{(-1)^m}(j)}^{[m]}(a))=(\iota_0\tilde{g}^{(-1)^m}(j),e\overline{g}_{\tilde{g}^{(-1)^m}(j)}^{[m]}(a))=(\iota_0\tilde{g}^{(-1)^m}(j),(\overline{g}_{\tilde{g}^{(-1)^m}(j)}^{[m]})^e e(a))$,
      by Lemma~\ref{l: calculating g^{[m]} in partition diagrams}  and the definition of the counterpart by $\overline{\cdot}$.
      Also by Lemma~\ref{l:e, composition and residuals}, 
      $(\overline{g}_{\tilde{g}^{(-1)^m}(j)}^{[m]})^e(e(a))=(\overline{g}_{\tilde{g}^{(-1)^m}(j)}^e)^{[m]}(e(a))$, and
       $(\iota_0\tilde{g}^{(-1)^m}(j),(\overline{g}_{\tilde{g}^{(-1)^m}(j)}^e)^{[m]}(e(a)))=(\tilde{f}(\iota_0 (j)),\overline{f}_{\tilde{f}^{(-1)^m}(j)}^{(m)}(e(a)))=f^{(m)}\iota(j,a)$,
       by the  definition of $f$,
      Hence, $\iota g^{(m)}(j,a)=f^{(m)}\iota(j,a)$.

      We now show that $\psi(u)(p)=\iota\varphi(u)(p)$, for all $u\in FS$, by induction on the structure of $u$. If $u,x_i^{(m)}u\in FS$ and  $\psi(u)(p)=\iota\varphi(u)(p)$, then we have
      $     \psi(x_i^{(m)}u)(\iota(p))=f_i^{(m)}[\psi(u)(\iota(p)]
      =f_i^{(m)}[\iota\varphi(u)(p)]
      =\iota g_i^{[m]}\iota^{-1}[\iota\varphi(u)(p)]
      =\iota g_i^{[m]}\varphi(u)(p)
      =\iota \varphi(x_i^{(m)}u)(p)$.
      Therefore, $\iota\varphi(1)(p) > \iota\varphi( w_{1})(p), \ldots  , \iota\varphi(w_{k})(p)$,
      by the order preservation of $\iota$, so $\psi(1)(\iota(p)) > \psi( w_{1})(\iota(p)), \ldots  , \psi(w_{k})(\iota(p))$; i.e., $\varepsilon$ fails in $\m{F}_n(\mathbb{Q}\overrightarrow{\times}\mathbb{Z})$.
  \end{proof}

  \begin{theorem}\label{t: LPnfailure}
      An equation fails in $\mathsf{LP_n}$ iff it fails in  an $n$-short $n$-periodic partition diagram of size at most $|\Delta_{\varepsilon}|^2$ 
       iff it fails in  
       $\m{F}_n(\mathbb{Q}\overrightarrow{\times}\mathbb{Z})$.
  \end{theorem}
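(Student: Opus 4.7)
The plan is to obtain the result as a direct chaining of the two previous theorems together with the trivial observation that $\m F_n(\mathbb{Q}\overrightarrow{\times}\mathbb{Z})\in \mathsf{LP_n}$, closing the cycle of implications.

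First, going from left to right, I would invoke Theorem~\ref{t:LP_n to diagrams} verbatim to move from a failure in $\mathsf{LP_n}$ to a failure in an $n$-short $n$-periodic partition diagram of size at most $|\Delta_\varepsilon|^2$; no further work is needed here, since that theorem already establishes both the $n$-shortness and the size bound.

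Next, for the middle implication, I would apply Theorem~\ref{t:diagram implies F_n(QxZ)}, noting that the hypothesis of that theorem only requires an $n$-periodic partition diagram, and $n$-short $n$-periodic diagrams are in particular $n$-periodic. This produces a failure in $\m F_n(\mathbb{Q}\overrightarrow{\times}\mathbb{Z})$.

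Finally, to close the loop, I would observe that by the definition of $\m F_n(\m \Omega)$ as the $n$-periodic subalgebra of $\m F(\m \Omega)$ (Lemma~\ref{l: F_N(Omega)}), the algebra $\m F_n(\mathbb{Q}\overrightarrow{\times}\mathbb{Z})$ is itself an $n$-periodic $\ell$-pregroup, hence a member of $\mathsf{LP_n}$; therefore any equation failing in $\m F_n(\mathbb{Q}\overrightarrow{\times}\mathbb{Z})$ fails in $\mathsf{LP_n}$. There is no real obstacle in any of the three steps here, since all the substantive work---the use of the wreath-product decomposition to produce a partition diagram, the height control via the linear-system analysis giving $n$-shortness, and the realization of a diagram inside $\m F_n(\mathbb{Q}\overrightarrow{\times}\mathbb{Z})$ via the injection $\iota_0:\m J_0 \to \mathbb{Q}$---has been carried out in the preceding theorems.
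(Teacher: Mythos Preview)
Your proposal is correct and follows essentially the same approach as the paper: chain Theorem~\ref{t:LP_n to diagrams} and Theorem~\ref{t:diagram implies F_n(QxZ)}, then close the cycle using that $\m F_n(\mathbb{Q}\overrightarrow{\times}\mathbb{Z})$ is $n$-periodic and hence in $\mathsf{LP_n}$.
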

  \begin{proof}
  If an equation $\varepsilon$  fails in $\mathsf{LP_n}$, then by Theorem~\ref{t:LP_n to diagrams} it  fails in  an $n$-short $n$-periodic partition diagram of size at most $|\Delta_{\varepsilon}|^2$. By  Theorem~\ref{t:diagram implies F_n(QxZ)}, it further fails in $\m{F}_n(\mathbb{Q}\overrightarrow{\times}\mathbb{Z})$. Since the latter is $n$-periodic, the equation  fails in $\mathsf{LP_n}$.
  \end{proof}

  \begin{corollary}
       The variety of $\mathsf{LP_n}$ is generated by $\m{F}_n(\mathbb{Q}\overrightarrow{\times}\mathbb{Z})$.     
  \end{corollary}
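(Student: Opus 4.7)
The plan is to derive this corollary essentially for free from Theorem~\ref{t: LPnfailure}, using the standard fact that a variety is characterized by its equational theory. To prove $\mathsf{LP_n}=\mathsf{V}(\m F_n(\mathbb{Q} \overrightarrow{\times}\mathbb{Z}))$, I would show the two inclusions separately by comparing their equational theories.

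First I would establish the easy inclusion $\mathsf{V}(\m F_n(\mathbb{Q} \overrightarrow{\times}\mathbb{Z})) \subseteq \mathsf{LP_n}$. Since $\m F_n(\mathbb{Q} \overrightarrow{\times}\mathbb{Z})$ is by definition the subalgebra of $n$-periodic elements of $\m F(\mathbb{Q} \overrightarrow{\times}\mathbb{Z})$, Lemma~\ref{l: F_N(Omega)}(2) tells us that it is itself an $n$-periodic $\ell$-pregroup, so it belongs to the variety $\mathsf{LP_n}$. Because $\mathsf{LP_n}$ is a variety, it is closed under the operators $\mathbb{H}$, $\mathbb{S}$, $\mathbb{P}$, and hence contains the variety generated by $\m F_n(\mathbb{Q} \overrightarrow{\times}\mathbb{Z})$.

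For the reverse inclusion $\mathsf{LP_n} \subseteq \mathsf{V}(\m F_n(\mathbb{Q} \overrightarrow{\times}\mathbb{Z}))$, I would argue contrapositively at the level of equations. Suppose an equation $\varepsilon$ fails in $\mathsf{LP_n}$; by the biconditional in Theorem~\ref{t: LPnfailure}, $\varepsilon$ also fails in $\m F_n(\mathbb{Q} \overrightarrow{\times}\mathbb{Z})$, and therefore fails in $\mathsf{V}(\m F_n(\mathbb{Q} \overrightarrow{\times}\mathbb{Z}))$. Thus every equation that holds in $\mathsf{V}(\m F_n(\mathbb{Q} \overrightarrow{\times}\mathbb{Z}))$ must hold in $\mathsf{LP_n}$, giving the desired inclusion.

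There is no real obstacle at this stage, since all the substantive work has already been carried out. The heavy lifting lies in the chain Theorem~\ref{t:LP_n to diagrams} (reducing a failure in $\mathsf{LP_n}$ to a failure in an $n$-short $n$-periodic partition diagram, via the first representation theorem and Lemma~\ref{l:bounded for F_N(Z)}) followed by Theorem~\ref{t:diagram implies F_n(QxZ)} (materializing such a diagram failure inside $\m F_n(\mathbb{Q} \overrightarrow{\times}\mathbb{Z})$ by injecting $\m J_0$ into $\mathbb{Q}$ and extending each local component $\overline{g}_j$ via the $n$-short spacing embedding). Once these are in place, the corollary is a direct rephrasing of Theorem~\ref{t: LPnfailure} in variety-theoretic language.
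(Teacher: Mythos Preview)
Your proposal is correct and matches the paper's approach: the corollary is stated without proof immediately after Theorem~\ref{t: LPnfailure}, as an evident consequence of that equivalence together with the fact that $\m F_n(\mathbb{Q}\overrightarrow{\times}\mathbb{Z})\in\mathsf{LP_n}$. You have simply spelled out the two inclusions that the paper leaves implicit.
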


  
  \begin{corollary}
       The equational theory of the variety of $\mathsf{LP_n}$ is decidable.   
  \end{corollary}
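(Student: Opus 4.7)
The plan is to mirror the decidability argument given for $\m F_n(\mathbb{Z})$ in Theorem~\ref{t: decidability of V(F_n(Z))}, but now at the level of $n$-periodic partition diagrams, using Theorem~\ref{t: LPnfailure} as the key bridge.

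First I would invoke Theorem~\ref{t: LPnfailure}, which reduces the question of whether an equation $\varepsilon$ fails in $\mathsf{LP_n}$ to the question of whether it fails in some $n$-short $n$-periodic partition diagram of size at most $|\Delta_{\varepsilon}|^2$. Since $\Delta_\varepsilon$ is computable from $\varepsilon$, this already gives an explicit finite bound on the size of the objects to be inspected.

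Next I would argue that the collection of candidate partition diagrams to be tested is finite and effectively enumerable. Up to isomorphism there are only finitely many c-chains of the form $\m J_0 \overrightarrow{\times} \m \Delta_0$ with $|J_0|\cdot|\Delta_0|\leq |\Delta_\varepsilon|^2$, and on each such c-chain there are only finitely many tuples $(g_1,\ldots,g_l)$ of order-preserving partial functions compatible with $\diagcov$ and satisfying the partition-diagram condition $x\equiv y \Leftrightarrow g_i(x)\equiv g_i(y)$. For each such candidate, one must check whether there is an $n$-short spacing embedding $e:\m \Delta_0 \rightarrow \mathbb{Z}$ with respect to which every $\overline{g}_{i,j}$ (for $i\in\{1,\ldots,l\}$ and $j\in J_0$) is $n$-periodic. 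By definition of $n$-short, the image of $e$ lies in $\{0,1,\ldots,\nu(|\Delta_0|)\}$ where $\nu(|\Delta_0|)=(\rho(3|\Delta_0|)+1)n$, so there are only finitely many such embeddings to try; and for each fixed $e$, Lemma~\ref{l:check periodicity} reduces the verification of $n$-periodicity to checking a bounded family of inequalities, namely those with $|k|\leq \lceil \nu(|\Delta_0|)/n\rceil$.

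Finally, given a partition diagram that survives these tests, deciding whether $\varepsilon=(1\leq w_1\vee\cdots\vee w_k)$ fails in it is a finite computation: one evaluates the intensional homomorphism $\varphi:\m{Ti}\rightarrow\m{Pf}(\m \Delta)$ extending $\varphi(x_i)=g_i$ on each final subword appearing in $\varepsilon$ (this is purely mechanical using composition and the computable operations $g\mapsto g^{[\ell]}, g^{[r]}$ on a finite c-chain), and checks, for each $p\in J_0\times\Delta_0$, whether $\varphi(1)(p)>\varphi(w_1)(p),\ldots,\varphi(w_k)(p)$. Combining these three finite search spaces yields an algorithm that terminates on every input equation and answers correctly by Theorem~\ref{t: LPnfailure}. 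The conceptual obstacle of unbounded spacing into $\mathbb{Z}$ was already dispatched upstream by Lemmas~\ref{l: bounded spacing embedding in Aut(Z)} and~\ref{l:bounded for F_N(Z)}, so no further difficulty arises here; the proof is essentially an orchestration of the bounds $|\Delta_\varepsilon|^2$ and $\nu(|\Delta_0|)$ together with the finiteness of the remaining combinatorial choices.
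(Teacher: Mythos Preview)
Your proposal is correct and follows essentially the same route as the paper's own proof: invoke Theorem~\ref{t: LPnfailure} to reduce to $n$-short $n$-periodic partition diagrams of bounded size, then argue that there are only finitely many such diagrams, finitely many candidate $n$-short spacing embeddings on $\m\Delta_0$ (with image contained in $\{0,\ldots,\nu(|\Delta_0|)\}$), and use Lemma~\ref{l:check periodicity} to make the $n$-periodicity test finite. You add a little more detail than the paper on the final step of checking failure of $\varepsilon$ in a given diagram, but the overall architecture and the bounds used are the same.
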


  \begin{proof}
      In view of Theorem~\ref{t: LPnfailure}, given an  equation $\varepsilon$ it is enough to verify that there are finitely-many partition diagrams of size at most $|\Delta_{\varepsilon}|^2$ and that there is a procedure for verifying whether a partition diagram is $n$-periodic with respect to some $n$-short spacing embedding.
      
      First we argue that there are only finitely-many partition diagrams of size at most $|\Delta_{\varepsilon}|^2$.  If $\m \Delta$ is the underlying c-chain, there are at most  $|\Delta_{\varepsilon}|^2$-many possible sizes for  $|\Delta|$ and for each such size there are at most  $(|\Delta_{\varepsilon}|^2)^2$-many possible choices of placements of $\diagcov$ covererings in $\m \Delta$. Also, there are at most  $|\Delta_{\varepsilon}|^2$-many possible       partitions. Finally, there are at most $2^{|\Delta|}$-many possible partial functions on $\Delta$ and at most ${|\varepsilon|}$-many possible variables in $\varepsilon$; thus there are at most $(2^{|\Delta|})^{|\varepsilon|}$-many possible partial functions in $\m \Delta$, i.e., at most $(2^{|\Delta_{\varepsilon}|^2})^{|\varepsilon|}$-many. Therefore, there are at most $|\Delta_{\varepsilon}|^{16}\cdot (2^{|\Delta_{\varepsilon}|^2})^{|\varepsilon|}$-many possible partition diagrams of size at most $|\Delta_{\varepsilon}|^2$.
      Given that $|\Delta_{\varepsilon}| \leq 2^{|\varepsilon|}|\varepsilon|^4$, 
      as computed in \cite{GG}, we get that the number of such diagrams is bounded by a function of $|\varepsilon|$.
      
      Also, every $n$-short spacing embedding on $\m \Delta$ has image whose convexification in $\mathbb{Z}$ is contained in $\mathbb{N}_d$, where $d \leq \nu(|\Delta|)$, hence $d \leq \nu(|\Delta_{\varepsilon}|^2)$. Therefore, there are at most $d^{|\Delta|}$-many $n$-short spacing embeddings on $\m \Delta$, hence at most $\nu(|\Delta_{\varepsilon}|^2)^{|\Delta_{\varepsilon}|^2}$-many. Given that $\rho(a):=2a^3a!+a+1$ and $\nu(a)=(\rho(3a)+1)n$, the number of such spacing embeddings is bounded by a function of $|\varepsilon|$.

      Given such a spacing embedding, by Lemma~\ref{l:check periodicity} we need to perform checks for the $n$-periodicity condition $x\leq_{\mathbb{Z}} y+kn \Rightarrow  g^e(x)\leq_{\mathbb{Z}} g^e(y)+ kn$ 
      for all $x,y \in Dom (g^e)$ and $\lceil d/n \rceil$. Therefore, the number of checks that we need to perform at most $|\Delta|^2\cdot \lceil d/n \rceil\leq |\Delta_{\varepsilon}|^2 \cdot \nu(|\Delta_{\varepsilon}|^2)/n$, hence bounded by a function of $|\varepsilon|$. 
      %
  \end{proof}


\end{document}